\newtheorem{theorem}{Theorem}[section]
\newtheorem{lemma}[theorem]{Lemma}
\newtheorem{remark}[theorem]{Remark}
\newtheorem{definition}[theorem]{Definition}
\newtheorem{main result}[theorem]{Main Result}
\definecolor{pastelred}{rgb}{1.0, 0.41, 0.38}
\definecolor{lightgreen}{rgb}{0.56, 0.93, 0.56}
\definecolor{lightblue}{rgb}{0.53, 0.81, 0.98}
\newcommand{\bs}{\boldsymbol}
\newcommand{\mc}{\mathcal}
\renewcommand*\d{\mathop{}\!\mathrm{d}}
\DeclareMathOperator*{\linspan}{span}
\newcommand{\R}{\mathbb{R}}
\newcommand{\N}{\mathbb{N}}
\newcommand{\Z}{\mathbb{Z}}
\newcommand{\id}{\mathbb{I}}
\newcommand{\tbl}{\ensuremath{t^D_{b,L}}}
\newcommand{\Tbl}{\ensuremath{T^D_{b,L}}}
\newcommand{\Vbl}{\ensuremath{\bs{V}^D_{b,L}}}
\newcommand{\VblSDD}{\ensuremath{\bs{V}^D_{b,L, S^D}}}
\newcommand{\VblSD}{\ensuremath{\bs{V}^D_{b,L, S^{\otimes D}}}}
\newcommand{\Vblm}{\ensuremath{\bs{V}^D_{b,L, m}}}
\newcommand{\vblSD}{\ensuremath{{V}^D_{b,L, S^D}}}
\newcommand{\vblm}{\ensuremath{{V}^D_{b,L, m}}}
\newcommand{\Pm}{\ensuremath{\mathbb{P}_m}}
\newcommand{\Umin}{\ensuremath{\bs{U}^{\min}}}
\newcommand{\T}{\ensuremath{\mathcal{T}}}
\newcommand{\TT}{\ensuremath{\mathcal{TT}}}
\newcommand{\PbS}{\ensuremath{\mathcal{P}_{b, L, S, \bs r}}}
\newcommand{\RbS}{\ensuremath{\mathcal{R}_{b, L, S, \bs r}}}
\newcommand{\PhiblS}{\ensuremath{\Phi}_{b,L,S,\bs r}}
\newcommandx{\norm}[2][2=]{\ensuremath{\left\| #1 \right\|_{#2}}}
\newcommandx{\snorm}[2][2=]{\ensuremath{\left| #1 \right|_{#2}}}
\newcommand{\Asq}{\ensuremath{A^\alpha_q}}
\newcommand{\full}{\ensuremath{\mathrm{compl}_{\mc F}}}
\newcommand{\sparse}{\ensuremath{\mathrm{compl}_{\mc S}}}
\newcommand{\neuron}{\ensuremath{\mathrm{compl}_{\mc N}}}
\newcommand{\PhiF}{\ensuremath{\Phi^{\mc F}}}
\newcommand{\PhiS}{\ensuremath{\Phi^{\mc S}}}
\newcommand{\Fsq}{\ensuremath{\mathcal{F}^\alpha_q(L^p)}}
\newcommand{\Ssq}{\ensuremath{\mathcal{S}^\alpha_q(L^p)}}
\newcommand{\I}{\ensuremath{\mathrm{I}}}
\newcommand{\A}{\ensuremath{\mathrm{A}}}
\newcommand{\M}{\ensuremath{\mathrm{M}}}
\newcommand{\B}{\ensuremath{B_q^{s_\I}(L^p(\Omega))}}
\newcommand{\AB}{\ensuremath{AB_q^{\bs\alpha}(L^p(\Omega))}}
\newcommand{\MB}{\ensuremath{MB_q^{s_\M}(L^p(\Omega))}}
\newcommand{\indicator}{\ensuremath{\mathbbm{1}}}
\newcommand{\rev}[1]{#1}
\newcommand{\revQ}[1]{#1}
\newcommand{\revY}[1]{#1}
\newcommand{\revZ}[1]{#1}
\title{Approximation Theory of Tree Tensor Networks: Tensorized Multivariate Functions}
\author{Mazen Ali \and Anthony Nouy}
\address{Nantes Universit\'e, Centrale Nantes, \\Laboratoire de Math\'ematiques Jean Leray, CNRS UMR 6629, France}
\email{mazen.ali90@gmail.com}
\email{anthony.nouy@ec-nantes.fr}
\thanks{Acknowledgments: This work was partially conducted within the France 2030 framework programme, Centre Henri Lebesgue ANR-11-LABX-0020-01. 
The authors acknowledge AIRBUS Group for the financial support with the project AtRandom. 
AN also acknowledges the partial funding by the ANR-DFG project COFNET (ANR-21-CE46-0015).
}
\date{\today}
\keywords{Tensor Networks, Tensor Trains, Matrix Product States,
Neural Networks, Approximation Spaces, Besov Spaces,
direct (Jackson) and inverse (Bernstein) inequalities. }
\subjclass[2010]{41A65, 41A15, 41A10 (primary); 68T05, 42C40, 65D99 (secondary)}
\begin{document}

\begin{abstract}
 We study the approximation of  multivariate functions  with tensor networks (TNs), \rev{providing some answers to  the following two questions:  ``\emph{what are the approximation capabilities of TNs for functions from classical  smoothness classes?}''    and    ``\emph{what are the properties of the  class of functions that can be  approximated with TNs with a certain performance?}''}
   
    \rev{As a partial answer to the former}, we show that TNs   can (near to) optimally replicate $h$-uniform and $h$-adaptive  spline  approximation, for any smoothness order    of the target function.    Tensor networks thus exhibit   universal \emph{expressivity} w.r.t.    isotropic, anisotropic and mixed smoothness spaces that is    comparable with more general neural networks families such as    deep rectified linear unit (ReLU) networks.    Put differently, TNs have the capacity to (near to) optimally approximate    many function classes -- without being adapted to the    particular class in question.
    
    \rev{As a partial answer to the latter}, as a candidate model class    we consider \emph{approximation classes} of TNs    and show that these    are (quasi-)Banach spaces, that many types of classical    smoothness spaces are continuously embedded    into said approximation classes and    that TNs approximation classes are themselves    not embedded in any classical smoothness space. \rev{In other words, TNs can efficiently approximate  functions that lie beyond classical smoothness spaces.}
\end{abstract}

\maketitle


\section{Introduction}
We study the approximation of real-valued functions
$f:\Omega\rightarrow\R$ on bounded $D$-dimensional
domains $\Omega\subset\R^D$ using tensor networks (TNs).
This work is a continuation of
\cite{ali2023approximation}. 
We refer to \cite{ali2023approximation} for a more detailed introduction.

\subsection{Previous Work}
Originally, TNs were used to approximate algebraic tensors
$f\in\R^{n_1\times\ldots\times n_D}$,
see, e.g., \cite{Hackbusch2012}.
In \cite{Oseledets2009}, the author used tensor trains (TT) to approximate matrices
by writing the row- and column-indices in the binary form
$i_\nu=\sum_{k=1}^{L}j_\nu^k2^{L-k} = \rev{[j_\nu^1, \dots , j_\nu^L]_2}$. This way
a matrix can be written as a higher-order tensor
\begin{align*}
    f(i_1,i_2)=\tilde{f}(j_1^1,\ldots,j_1^L,j_2^1,\ldots,
    j_2^L),
\end{align*}
and the higher-order tensor $\tilde{f}$ can be approximated by TTs.
This was
later coined the \emph{Quantics Tensor Train (QTT)} \cite{Khoromskij2011}
and morphed into \emph{Quantized TT} over time. \rev{Other tensor formats such as Hierarchical Tucker format of more general tree tensor networks could be used as well,  after a possible permutation of indices.} 
Functions of real variables can also be identified with higher-order tensors, or \emph{tensorized}, by using a binary or more general 
$b$-adic encoding of real numbers.
Representations of polynomials with the QTT format were studied in
\cite{Grasedyck2010,Oseledets2012} 
and numerical approximation for PDEs in \cite{Kazeev2017}
(and references therein).

In  \cite{ali2023approximation},
we studied approximation theoretic properties of TNs
for one-dimensional ($D=1$) functions in various
smoothness spaces. In \revY{the present work},
we extend \revY{these} results to the multi-dimensional setting
$D>1$ and consider a large class of classical smoothness classes for high-dimensional approximation.

\subsection{Approximation and Smoothness Classes}
Given an approximation
tool $\Phi=(\Phi_n)_{n\geq 0}$ for the  approximation of \revY{functions from a normed space $X$},
approximation classes of $\Phi$
are sets of functions\footnote{See
\Cref{def:appclasses}.}
\revY{$A^\alpha(X,\Phi)$} for which
the error of best approximation
\begin{align}\label{eq:performance}
    E(f,\Phi_n)_{\revY{X}}:=\inf_{\varphi\in\Phi_n}\norm{f-\varphi}[\revY{X}]
\end{align} 
decays like
$n^{-\alpha}$ for $\alpha>0$.
\revQ{These classes were extensively studied 
in the context of piece-wise polynomial and wavelet approximation. One result of  approximation theory}
(see, e.g., \cite{devore_1998}) is that,
if $\Phi_n$ are piece-wise polynomials,
the classes \revY{$A^\alpha(L^p,\Phi)$} are in fact {(quasi-)} Banach spaces
and are \revQ{isomorphic} to Besov smoothness spaces.
Specifically, if the error is measured
in the $L^p(\Omega)$-norm $\norm{\cdot}[p]$,
$\Phi_n$ is the linear span of $n$ piece-wise polynomials \revQ{over a uniform partition of $\Omega$},
and $B^s_p(L^p(\Omega))$ is the Besov space\footnote{See
\Cref{sec:besov}.}
of regularity order $s$ -- as measured in the $L^p$-norm --
then, \revY{the best approximation error $E(f,\Phi_n)_{L^p}$} decays with
rate $n^{-s/D}$ if and only if $f\in B^s_p(L^p(\Omega))$.

For the case of \emph{nonlinear
approximation} \revQ{with splines} 
\revQ{(using best $n$-term approximation with hierarchical bases, or splines over adaptive non-uniform partitions)},
one has a similar characterization
but with a much weaker regularity requirement
$f\in B^s_\tau(L^\tau(\Omega))$ where
$1/\tau=s/D+1/p$. The space $B^s_\tau(L^\tau(\Omega))$ is said
to be ``on the embedding line'' since functions
in this spaces barely have enough regularity to be $p$-integrable.
This is best depicted in the DeVore diagram in \Cref{fig:DeVore} or
the embeddings in \Cref{thm:embeds}.

\begin{figure}[ht!]
        \centering
        \includegraphics[scale=.5]{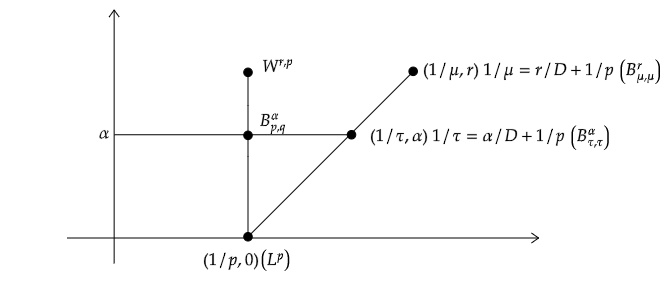}
        \caption{DeVore diagram of smoothness spaces \cite{devore_1998}.
        The Sobolev embedding line is the diagonal
        with the points $(1/\tau,\alpha)$ and $(1/\mu, r)$}.
        \label{fig:DeVore}
\end{figure}

Such results justify the claim that piece-wise polynomial
approximation, both linear and nonlinear, is mathematically
fully understood. Approximation with networks, on the other hand,
is not. This work is a contribution towards
a better understanding of these approximation tools.
We will compare TN-approximation classes with what is well-known:
piece-wise polynomial approximation classes and classical smoothness spaces.

\subsection{Main Results and Outline}
While many of the results here apply to general tree tensor networks,
unlike in the one-dimensional case, the network topology plays a crucial role
for our key embedding theorems where we require the approximation classes to be 
approximation \emph{spaces}, see also \Cref{sec:appclasses}. To that end,
we will have to restrict to linear ``TT''-like network topologies, \rev{using a particular ordering of variables}.
Our results can then be summarized in words as follows.
\begin{itemize}
    \item We show that TNs can achieve \rev{optimal or near to optimal rates of convergence for functions 
    from Sobolev or Besov spaces, with classical, mixed or anisotropic smoothness. 
    These results are obtained by  an encoding  of spline approximations using TNs.
    More precisely, we show that TNs achieve the same performance (measured by \cref{eq:performance})
    as linear approximation with splines (with uniform partition, and suitable polynomial degree),
    which is known to be optimal for some classes of Sobolev or Besov spaces.
    For a broader class of Sobolev or Besov spaces for which adaptive $n$-term approximation with
    splines (with suitable polynomial degree) is optimal, we show that TNs achieve the same performance,
    provided sparsity is exploited in the tensor representation. However, we show that using standard
    tensor formats, not exploiting sparsity, allows to achieve near to optimal approximation rates.}
    Such results are comparable to some
      expressivity results for other types of
    deep neural networks, see, e.g.,
    \cite{suzuki2018adaptivity,guhring2020,gribonval:hal-02117139,YAROTSKY2017103,
    SparseGrids,ali2020approximation}.
      \item Similar approximation results for TNs, not relying on tensorization of functions but on classical univariate approximation tools, 
      can be found in \cite{Schneider201456,griebel2021analysis,bachmayr2023approximation,griebel2023low}. 
      \rev{The interest of using tensorization of functions and the related approximation tool introduced in the present paper  is that (near to) optimal rates of convergence are achieved for a wide class of smoothness classes, without requiring to adapt the approximation tool to the smoothness of the target function. In addition, tensorization allows to exploit structures in multivariate functions that are not related to classical low-rank structures of these functions.}
    
    \item We show that for certain network topologies the approximation
    classes of TNs are
    (quasi-) Banach spaces.
 \item \rev{The above mentioned results on approximation rates imply that} Besov spaces of any order of isotropic,
    anisotropic or mixed smoothness are continuously embedded in
   the approximation
    classes of TNs.  
 \rev{However,} we also show that approximation classes of TNs themselves are not
    embedded in any Besov space.
\end{itemize}
\revY{We consider only $\Omega=[0,1)^D$.
However, we believe that} the approximation results \revY{could} be extended to
any bounded Lipschitz domain $\Omega\subset\R^D$,
see also \Cref{sec:conclusion}. \revY{This is left for future investigation.}

\subsection*{Outline}
In \Cref{sec:tensorize}, we show how $L^p(\Omega)$ can be
isometrically identified with a tensor space of any order.
We define subspaces of $L^p(\Omega)$ which will contain our approximations.
In \Cref{sec:tns}, we briefly review
tensor networks for multivariate approximation
and define our approximation tool \rev{based on TTs}.
In \Cref{sec:appclasses},
we define approximation classes of tensor networks
and show under which conditions these are (quasi-) Banach spaces.
\Cref{sec:embedd} contains our main results.
We show how spline systems can be encoded as
(or approximated by) a tensor network
and estimate the resulting complexity.
We prove approximation rates, direct and inverse embeddings.
\Cref{sec:conclusion} contains some concluding remarks, \rev{where we summarize our findings, emphasize   the role of tensorization, the influence of the ordering of variables and the choice of tensor format, and also  discuss the curse of dimensionality.}

\section{Tensorization}\label{sec:tensorize}

\revY{In \cite{ali2023approximation}, the \emph{tensorization} of one-dimensional functions is examined in detail.}
In this section, we extend the tensorization procedure to
higher dimensions. We omit details that are -- more or less --
the same as in the one-dimensional case.
Throughout this work $b$ is some integer $b=2,3,\ldots$
and $I_b:=\{0,\ldots,b-1\}$. \rev{For $Q\in \mathbb{N}$, an integer $i \in \{0,\dots,b^Q-1\}$ admits a representation in base $b$, that is $i = \sum_{k=1}^L i_k b^{Q-k} := [i_1, \dots, i_Q]_b$, with $(i_1, \dots, i_Q) \in I_b^Q$ ($Q$ bits when $b=2$). 
}

Unlike for dimension $D=1$, there are many valid approaches for tensorization
in higher dimensions. In a rather general setting,
tensorization of functions $f:\Omega\rightarrow\R$ over
a domain $\Omega\subset\R^D$ can be performed for any domain
$\Omega$ that can be encoded \rev{via some bijective transformation}\footnote{Or a diffeomorphism, depending on the intended application.}
\rev{$F:(i_1,\dots, i_Q , \bar x) \in I_b^Q\times [0,1]^D \mapsto F(i_1,\dots, i_Q , \bar x) \in \Omega$, with
$(i_1,\ldots,i_Q)\in I_b^Q$ indexing some element of a partition
of $\Omega$ with $N=b^Q$ elements, and $\bar x \in [0,1)^D $ being a local coordinate system within an element}. \revY{Even more generally, any partition with $N$ elements could be encoded with digits $(i_1, \dots , i_Q) \in I_{b_1} \times \dots \times I_{b_Q}$ given a factorization $N = b_1 \dots b_Q$. }
For the sake of a comprehensible presentation,
we will focus on $\Omega=[0,1)^D$ and a very specific tensorization
that is most relevant for \Cref{sec:appclasses,sec:embedd}.

In fact, the tensorization scheme we choose is conceptually close
to the one-dimensional case such that many properties are inherited
with similar proofs. As will become clear in \Cref{sec:appclasses}, this choice
of tensorization is required\footnote{In the sense that
other ``natural'' tensorization approaches would not lead
to linear approximation spaces, see also
\Cref{sec:p4}.} to ensure the approximation classes
we define in \Cref{def:tnappspaces} are (quasi-) Banach spaces.
However, the results presented in
this section can be extended to more general domains and tensorizations.


\subsection{The Tensorization Map}
Fix a level parameter $L\in\N_{\geq 0}$ and define the one-dimensional
conversion/encoding map $t_{b,L} : I_b^L \times [0,1) \to [0,1)$ such that 
$$t_{b,L}(i_1,\hdots,i_L , \bar x) = b^{-L} ([i_1,\hdots,i_L]_b +  \bar x), \quad [i_1,\hdots,i_L]_b := \sum_{k=1}^L i_k b^{L-k}.
$$
\rev{A univariate function $f : [0,1) \to \mathbb{R}$ can then be identified with a $(L+1)$-variate function $f \circ t_{b,L}(i_1,\hdots,i_L , \bar x)$ defined on $I_b^L \times [0,1)$.}

Then we introduce the $D$-dimensional conversion/encoding map
$\tbl:I_b^{LD}\times [0,1)^D\rightarrow [0,1)^D$
via
\begin{align*}
    \tbl(i_1^1,\ldots,i_D^1,\ldots,i_1^L,\ldots,i_D^L, \bar x_1,\ldots,\bar x_D)
    =&\\ 
    \left(t_{b,L}(i_1^1,\hdots,i_1^L,\bar x_1) , \hdots , t_{b,L}(i_D^1,\hdots,i_D^L,\bar x_D)  \right).&
\end{align*}
With this we can define a tensorization map that transforms
a $D$-variate function into a
$(L+1)D$-variate function.

\begin{definition}[Tensorization Map]\label{def:tensormap}
    We define the tensorization map $\Tbl:\R^{[0,1)^D}\rightarrow\R^{I_b^{LD}\times
    [0,1)^D}$ as
    \begin{align*}
        (\Tbl f)(i_1^1,\ldots,i_D^L,\bar x_1,\ldots,\bar x_D):= f(
        \tbl(i_1^1,\ldots,i_D^L,\bar x_1,\ldots,\bar x_D)).
    \end{align*}
\end{definition}

\begin{theorem}[Isometry]\label{th:isometry}
    Equip $[0,1)^D$ with the standard Lebesgue measure $\lambda^D$ and
    $I_b^{LD}\times [0,1)^D$ with the product measure
    $\mu_{b,L}:=\mu_b^{\otimes LD}\otimes\lambda^D$,
    where $\mu_b$ is the uniform probability measure on $I_b=\{0,\ldots,b-1\}$.
    Then, the following holds.
    \begin{enumerate}[label=(\roman*)]
        \item    The map $\Tbl$ is an isomorphism between the space of
                    (Borel) measurable functions 
                     on $[0,1)^D$  and the space of (Borel) measurable functions 
                    on $I_b^{LD}\times[0,1)^D$.
        \item    The map $\Tbl$ is an isometry between
                    $L^p([0,1)^D)$ and the tensor space
                    \begin{align*}
                        \Vbl:=\ell^p_{\mu_b}(I_b)^{\otimes LD}\otimes
                        L^p([0,1))^{\otimes D},
                    \end{align*}
                    for $0<p\leq\infty$, where $\Vbl$ is equipped with
                    the (quasi-)$L^p$-norm associated with
                    the measure $\mu_{b,L}$.
                    Moreover, $\|\cdot\|_{\Vbl}$ is a (quasi-)crossnorm, and,
                    for $1\leq p\leq\infty$, it is a reasonable crossnorm.
    \end{enumerate}
\end{theorem}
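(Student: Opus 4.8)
The plan is to mimic the one-dimensional argument from Part~I, exploiting the fact that \tbl is essentially a $D$-fold product of the one-dimensional conversion maps. First I would establish (i): measurability of $\tbl$ (it is piecewise affine, hence Borel) gives that $\Tbl f$ is measurable whenever $f$ is, so $\Tbl$ maps $\R^{[0,1)^D}$ into $\R^{I_b^{LD}\times[0,1)^D}$. For the inverse, observe that every $(x_1,\ldots,x_D)\in[0,1)^D$ has a unique representation $x_\nu=\sum_{k=1}^L i_\nu^k b^{-k}+b^{-L}\bar x_\nu$ with $i_\nu^k\in I_b$ and $\bar x_\nu\in[0,1)$ --- this is just the base-$b$ expansion truncated at level $L$ --- so $\tbl$ is a bijection, and one checks $\Tbl$ is linear and invertible with $(\Tbl^{-1}g)(x)=g(\tbl^{-1}(x))$. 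Linearity and bijectivity then give that $\Tbl$ is a linear isomorphism of the spaces of measurable functions; quotienting by null sets is harmless because $\tbl$ pushes $\mu_{b,L}$ forward to $\lambda^D$ (see below), so null sets correspond to null sets.

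For (ii), the key computation is the change-of-variables / pushforward identity $(\tbl)_*\mu_{b,L}=\lambda^D$. This follows because on each cell indexed by $(i_1^1,\ldots,i_D^L)$ the map $\tbl$ acts as $\bar x\mapsto b^{-L}\bar x + (\text{const})$ in each coordinate, scaling $D$-dimensional Lebesgue measure by $b^{-LD}$, while $\mu_b^{\otimes LD}$ assigns mass $b^{-LD}$ to each such index tuple; summing over the $b^{LD}$ cells recovers $\lambda^D$ on $[0,1)^D$. Consequently, for $0<p<\infty$,
\begin{align*}
    \int_{I_b^{LD}\times[0,1)^D}\snorm{(\Tbl f)(\bs i,\bar{\bs x})}^p\,\d\mu_{b,L}
    = \int_{[0,1)^D}\snorm{f(\bs x)}^p\,\d\lambda^D(\bs x),
\end{align*}
which is exactly $\norm{\Tbl f}[\Vbl]=\norm{f}[p]$; the case $p=\infty$ follows similarly since $\tbl$ is a measure-isomorphism and essential suprema are preserved. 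This shows $\Tbl$ restricts to a surjective isometry $L^p([0,1)^D)\to\Vbl$ (surjectivity because $\Tbl^{-1}$ is available from part (i) and visibly maps $\Vbl$ back into $L^p$).

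For the crossnorm claims I would argue as in Part~I. The norm on $\Vbl$ is by construction the $L^p$-norm associated with the product measure $\mu_{b,L}=\mu_b^{\otimes LD}\otimes\lambda^D$; on elementary tensors $g=\bigotimes_{k,\nu} g_\nu^k\otimes\bigotimes_\nu h_\nu$ Fubini factorizes the integral, giving $\norm{g}[\Vbl]=\prod_{k,\nu}\norm{g_\nu^k}[\ell^p_{\mu_b}(I_b)]\prod_\nu\norm{h_\nu}[L^p([0,1))]$, i.e.\ it is a (quasi-)crossnorm (the quasi-triangle constant being that of the $L^p$ quasi-norm when $p<1$). For $1\le p\le\infty$, reasonableness requires in addition the bound on rank-one functionals; this is the standard fact that for $\sigma$-finite product measures the dual pairing of $L^p$ with $L^{p'}$ satisfies $\norm{\bigotimes_j\varphi_j}[(\Vbl)']\le\prod_j\norm{\varphi_j}$, which again follows from Fubini/Hölder applied coordinatewise. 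The main obstacle --- if any --- is bookkeeping: keeping the $LD$ discrete factors and $D$ continuous factors organized and verifying the pushforward identity carefully on the product $\sigma$-algebra; the analytic content is entirely inherited from the $D=1$ case treated in \cite{partI}, so I would state the measure-pushforward lemma explicitly and then invoke the one-dimensional isometry and crossnorm results factor by factor.
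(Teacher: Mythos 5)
Your proof is correct and takes essentially the same route the paper intends: the paper's proof simply invokes the $D=1$ isometry result of Part I and states that it extends naturally to $D>1$, and your bijectivity-of-$\tbl$, measure-pushforward/change-of-variables, and Fubini--Hölder arguments are exactly that natural extension written out coordinatewise. The only cosmetic caveat (shared with the paper's own statement) is that for $D>1$ surjectivity onto $\Vbl$ requires reading the tensor space as completed in the $L^p(\mu_{b,L})$-(quasi-)norm, since the algebraic product $L^p([0,1))^{\otimes D}$ is only dense in $L^p([0,1)^D)$ for $p<\infty$.
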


\begin{proof}
    This follows from  \cite[Proposition 2.7]{ali2023approximation} and \cite[Theorem 2.15]{ali2023approximation}
    for the case $D=1$. The results naturally extend to the case
    $D>1$. \rev{Here is a sketch of proof. To prove (i), we consider Borel sets of the form $J \times A$ with $J = \times_{k=1}^L \times_{\nu=1}^D J_{\nu,k}  \subset I_b^{LD}$ and $A $ a Borel subset of $[0,1)^D.$ They form a generating system of the Borel $\sigma$-algebra of $I_b^{LD} \times [0,1)^D$. 
    Then observing that $ t_{b,L}^D(J \times A) = \bigcup_{j \in J} A_j$ with disjoint sets $A_j$ of measure $\lambda^D(A_j) = b^{-LD} \lambda^D(A),$ we easily prove that  $\lambda^D(t_{b,L}^D(J \times A)) =  \mu_{b,L}(J \times A)$. 
    Since $T_{b,L}^D$ is a linear bijection, it preserves measurability, which yields (i). 
    The statement (ii) follows from the equality between the norm of $f$ in $L^p([0,1)^D, \lambda^D)$  and the norm of its tensorization $\boldsymbol{f}$ in  $L^p(I_b^{LD}\times [0,1)^D, \mu_{b,L})$. Indeed,  for $p<\infty$, it holds
    \begin{align*}
     \Vert f \Vert_{L^p}^p = \sum_{j \in I_b^{LD}} \int_{A_j} \vert f(x) \vert^p 
     d\lambda^D(x) = \sum_{j \in I_b^{LD}}  b^{-LD} \Vert \boldsymbol{f}(j_1^1, \dots , j_L^D , \cdot) \Vert_{L^p}^p&\\
     = \int_{I_b^{LD} \times [0,1)^D} \vert \boldsymbol{f}(j_1^1, \dots , j_L^D , \bar x) \vert^p d\mu_{b,L}(j_1^1, \dots , j^D_L , \bar x)  = \Vert \boldsymbol{f} \Vert^p_{L^p},&
    \end{align*}
    and similarly for $p=+\infty.$
    }
\end{proof}


\subsection{Finite-Dimensional Subspaces}

In this work we are concerned with approximation, and thus we
consider a subspace of
$\Vbl$ defined by
\begin{align*}
    \VblSDD:=\R^{I_b^{LD}}\otimes S^D,
\end{align*}
where $S^D\subset L^p([0,1)^D)$ is some finite-dimensional subspace.
Since $\VblSD$ can be identified with a subspace of $L^p([0,1)^D)$
through the use of the tensorization map $\Tbl$,
we set
\begin{align*}
    \vblSD:=(\Tbl)^{-1}(\VblSDD).
\end{align*}
\revQ{\begin{remark}\label{rem:coefficient_tensor}
Given a basis $\psi := (\phi_1, \dots, \phi_{M})$ of $S^D$, and element $\boldsymbol{f}$ in $\VblSDD$ admits a representation 
\begin{align*}
&\boldsymbol{f}(i_1^1, \dots, i_D^1 , \dots, i_1^L, \dots, i_D^L , \bar x_1, \dots, \bar x_D) =\\
 &\sum_{k=1}^M  \boldsymbol{C}(i_1^1, \dots, i_D^1 , \dots, i_1^{L},\dots, i_D^{L},k) \phi_{k}(\bar x_1, \dots  ,\bar x_D),
\end{align*}
which allows to identify $\boldsymbol{f}$ with the tensor $\boldsymbol{C} \in (\R^{b})^{\otimes (LD)} \otimes \R^{M}$. 
A set of integers $(i_1^1, \dots, i_D^1 , \dots, i_1^L, \dots, i_D^L)$  corresponds to a particular element $I$ of a uniform partition of $[0,1)^D$ into $b^{LD}$ hypercubes, and the entries $\{ \boldsymbol{C}(i_1^1, \dots, i_D^1 , \dots, i_1^{L},\dots, i_D^{L},k) : 1\le k \le M\}$ correspond to the coefficients of the expansion on the basis $\phi$ of the function  $f = (T_{b,L}^D)^{-1}\boldsymbol{f}$ restricted to $I$, and expressed in terms of the local variable 
$\bar x$. If $S^D$ is the space of constant functions, then $f$ is a piecewise constant function over the partition, and $\boldsymbol{f}$ is identified with a tensor $\boldsymbol{C}(i_1^1, \dots, i_D^1 , \dots, i_1^{L},\dots, i_D^{L})$ which corresponds to the value of $f$ on the element of the partition indexed by   $(i_1^1, \dots, i_D^1 , \dots, i_1^{L},\dots, i_D^{L})$.  
\end{remark}
}
As in \cite{ali2023approximation}, to ensure the approximation classes defined in \Cref{def:tnappspaces}
are
actually (quasi-) Banach spaces, it is necessary for $\vblSD$ to possess
a hierarchical structure. Thus, we require the following definition. 

\begin{definition}[Closed Under $b$-adic Dilation]
    We say $S^D$ is \emph{closed under $b$-adic dilation} if for any $f\in S^D$ and
    any $k\in\{0,\ldots,b-1\}^D$
    \begin{align*}
        f(b^{-1}(\cdot +k))\in S^D.
    \end{align*}
\end{definition}

This implies the following theorem. 

\begin{theorem}[Hierarchy of Spaces $\vblSD$]\label{thm:hierarchy}
    If $S^D\subset L^p([0,1)^D)$ is closed under $b$-adic dilation, then
    \begin{enumerate}[label=(\roman*)]
        \item    it holds
                    \begin{align*}
                        S^D=:V_{b,0,S^D}^D\subset V^D_{b,1,S^D}
                        \subset V_{b,2,S^D}^D\subset\ldots,
                    \end{align*}
        \item    the set
                    \begin{align*}
                        V^D_{b,S^D}:=\bigcup_{L=0}^\infty\vblSD
                    \end{align*}
                    is a subspace of $L^p([0,1)^D)$,
        \item    and, if $S^D$ contains the constant function one,
                    $V^{\revZ{D}}_{b,S^D}$ is dense in $L^p([0,1)^D)$ for
                    $0<p<\infty$.
    \end{enumerate}
\end{theorem}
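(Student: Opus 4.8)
The plan is to establish (i), (ii), (iii) in turn, with essentially all of the work concentrated in the chain of inclusions (i); the rest is formal once (i) and the isometry theorem are in hand. The base case $S^D=V^D_{b,0,S^D}$ is immediate, since at level $L=0$ the index set $I_b^{0D}$ is a single point and $T^D_{b,0}$ is the identity map on $\R^{[0,1)^D}$, so that $V^D_{b,0,S^D}=(T^D_{b,0})^{-1}\bigl(\R\otimes S^D\bigr)=S^D$.

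For the step $\vblSD\subset V^D_{b,L+1,S^D}$, the key is a compatibility identity relating the conversion maps at levels $L$ and $L+1$. Writing digits as vectors $i^k=(i_1^k,\ldots,i_D^k)\in I_b^D$ and, for $k\in I_b^D$, setting $\phi_k\colon[0,1)^D\to[0,1)^D$, $\phi_k(\bar x):=b^{-1}(\bar x+k)$ (well defined since each entry of $k$ is at most $b-1$), a direct substitution into the definition of $\tbl$ gives, coordinatewise,
\begin{align*}
    \sum_{j=1}^{L}i_d^j b^{-j}+b^{-L}\bigl(i_d^{L+1}b^{-1}+b^{-1}\bar x_d\bigr)
    =\sum_{j=1}^{L+1}i_d^j b^{-j}+b^{-(L+1)}\bar x_d,
\end{align*}
that is $t^D_{b,L+1}(i^1,\ldots,i^{L+1},\bar x)=\tbl\bigl(i^1,\ldots,i^L,\phi_{i^{L+1}}(\bar x)\bigr)$, and hence, for any $g\in\R^{[0,1)^D}$,
\begin{align*}
    (T^D_{b,L+1}g)(i^1,\ldots,i^{L+1},\bar x)=(\Tbl g)\bigl(i^1,\ldots,i^L,\phi_{i^{L+1}}(\bar x)\bigr).
\end{align*}
Now take $g\in\vblSD$, so that $v:=\Tbl g\in\VblSD$; since $I_b^{LD}$ is finite this is equivalent to saying that the partial function $\bar y\mapsto v(i^1,\ldots,i^L,\bar y)$ lies in $S^D$ for every fixed $(i^1,\ldots,i^L)$. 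By the displayed identity, for every fixed $(i^1,\ldots,i^{L+1})$ the partial function $\bar x\mapsto(T^D_{b,L+1}g)(i^1,\ldots,i^{L+1},\bar x)$ equals $v(i^1,\ldots,i^L,\cdot)\circ\phi_{i^{L+1}}=v\bigl(i^1,\ldots,i^L,b^{-1}(\cdot+i^{L+1})\bigr)$, which lies in $S^D$ exactly because $S^D$ is closed under $b$-adic dilation. Using finiteness of $I_b^{(L+1)D}$ once more, this shows $T^D_{b,L+1}g\in\R^{I_b^{(L+1)D}}\otimes S^D$, i.e.\ $g\in V^D_{b,L+1,S^D}$, which proves (i).

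For (ii), each $\vblSD$ is a linear subspace of $L^p([0,1)^D)$, being the image of the linear subspace $\VblSD\subset\Vbl$ under the linear bijection $(\Tbl)^{-1}$; by (i) these subspaces are nested, so $V^D_{b,S^D}=\bigcup_{L\geq 0}\vblSD$ is again a linear subspace. For (iii), if $1\in S^D$ then $\linspan\{1\}\subset S^D$, and $\linspan\{1\}$ is trivially closed under $b$-adic dilation, so $V^D_{b,L,\linspan\{1\}}\subset\vblSD$ for every $L$; but $V^D_{b,L,\linspan\{1\}}$ is precisely the space of functions that are constant on each half-open $b$-adic cube of side length $b^{-L}$, and $\bigcup_{L\geq 0}V^D_{b,L,\linspan\{1\}}$ is dense in $L^p([0,1)^D)$ for $0<p<\infty$ by the classical density of $b$-adic step functions (equivalently, by $L^p$-convergence of the conditional expectations along the $b$-adic filtration). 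Hence $V^D_{b,S^D}$ is dense, which proves (iii).

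I expect the only genuinely analytic point to be the density statement used in (iii), which is standard; the remainder is bookkeeping, and the one place that requires care is verifying the level-compatibility identity for $\tbl$ and recognizing that the rescaling it produces is exactly the $b$-adic dilation $b^{-1}(\cdot+k)$ — so that the hypothesis in the definition is precisely what the inductive step consumes. As this parallels the one-dimensional argument in \cite{partI}, I would carry out only these verifications in detail and refer to part~I for the rest.
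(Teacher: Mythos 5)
Your proof is correct and is essentially the approach the paper intends: the paper's own proof just defers to the one-dimensional argument of part I (Theorem 2.25 there), and your level-compatibility identity $t^D_{b,L+1}(i^1,\ldots,i^{L+1},\bar x)=t^D_{b,L}\bigl(i^1,\ldots,i^L,b^{-1}(\bar x+i^{L+1})\bigr)$ combined with closure under $b$-adic dilation, followed by the nested-union and $b$-adic step-function density arguments, is exactly the multivariate version of that argument, written out in full.
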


\begin{proof}
    It follows with similar arguments as in  \cite[Proposition 2.19]{ali2023approximation}.  \rev{(i) Assume $f\in V_{b,L, S^D}^D$, that is $\boldsymbol{f} = T_{b,L}^D(f) \in \boldsymbol{V}_{b,L,S^D}^D$. Then $\boldsymbol{f}(i_1^1, \dots , i_L^D , \cdot) \in S^D$, and since  $S^D$ is closed under $b$-adic dilations, it also holds $\boldsymbol{f}(i_1^1, \dots , i_L^D , b^{-1}((i_{L+1}^1 , \dots, i_{L+1}^D) + \cdot))\in S^D$, which implies $(T_{b,L+1}^D f)(i_1^1, \dots, i^D_{L+1}, \bar x) \in S^D$, and therefore, $f\in V^D_{b,L+1 , S^D}$. The assertion (ii) follows from the fact that for two functions $f_1,f_2 \in V_{b,S^D}^D $, there exists a $L$ such that $f_1,f_2 \in V_{b,L, S^D}^D$, and a linear combination of the two functions is in  $ V_{b,L, S^D}^D \subset V_{b,S^D}^D$. To prove  (iii), we use the density  of step functions in $L^p([0,1)^D)$ for any $0<p<\infty$, and prove the density of $V_{b,S^D}^D$  in the set of step functions, by constructing a sequence of piecewise constant functions $f_L \in V_{b,L,S^D}^D$ which converges to a step function $f$ as $L\to +\infty$. 
     }
\end{proof}

In \Cref{sec:embedd}, we will employ $S^D:=(\Pm)^{\otimes D}$, where
$\Pm$ is the space of polynomials of degree $m\in\N_{\geq 0}$, restricted
to $[0,1)$. In this case we will simply write
\begin{align*}
    \Vblm:=\R^{I_b^{LD}}\otimes(\Pm)^{\otimes D},\quad
    \vblm:=(\Tbl)^{-1}(\Vblm),\quad
    V_{b,m}^D:=\bigcup_{L=0}^\infty\vblm.
\end{align*}
\revQ{\begin{remark}
The space $V_{b,L,m}^D$ corresponds to the space of (discontinuous) piecewise polynomials of partial degree $m$ over a uniform partition of $[0,1)^D$ into $N = b^{LD}$ hypercubes with measure $b^{-LD}$.  
Following Remark \ref{rem:coefficient_tensor}, given a univariate polynomial basis $\varphi_0, \dots, \varphi_{m}$ of $\mathbb{P}_{m}$, the functions $\phi_k(\bar x) =  \varphi_{j_1}(\bar x_1) \dots \varphi_{j_D}(\bar x_D)$ for $k = 1+ [j_1, \dots, j_D]_{m+1} \in \{1, \dots , M= (m+1)^D \}$ form a basis of $\mathbb{P}_m^{\otimes D}$, and the tensor $\boldsymbol{C}$ is identified with an element of $(\R^{b})^{\otimes (LD)} \otimes (\R^{m{+1}})^{\otimes D}$, which corresponds to the set of coefficients of the expansion of $f \in V_{b,L,m}^D$ in a particular basis of  (discontinuous) piecewise polynomials over a uniform partition of $[0,1)^D$. 
\end{remark}
}

\section{Tensor Networks}\label{sec:tns}

Our approximation tool will consist of functions
in $V_{b,m}^D$ identified (through $T_{b,L}^D$) with a tensor network.
In this section, we briefly review some key notions about tensor networks
relevant to our work. 
\rev{Here, we assume that the local approximation space $S^D=S^{\otimes D}$ for some finite-dimensional space of univariate functions $S\subset L^p([0,1))$.}


\subsection{Ranks and Minimal Subspaces}
For some fixed level $L\in\N_{\geq 0}$, assume $S^D=S^{\otimes D}$
for some finite-dimensional $S$ and set
\begin{alignat*}{2}
    V_\nu&:=\R^{I_b},&&\quad\nu=1,\ldots,LD,\\
    V_\nu&:=S,&&\quad\nu=LD+1,\ldots,(L+1)D,
\end{alignat*}
i.e., $\VblSD=\bigotimes_{\nu=1}^{(L+1)D}V_\nu$. Then,
we can define the notion of (multilinear) rank
by identifying an order-$(L+1)D$ tensor from
$\VblSD$ with an order-2 tensor from
\begin{align*}
    \bs V_\beta\otimes\bs V_{\beta^c},\quad
    \bs V_\beta:=\bigotimes_{\nu\in\beta}V_\nu,\;
    \bs V_{\beta^c}:=\bigotimes_{\nu\in\beta^c}V_\nu,\quad\\
    \beta\subset\{1,\ldots,(L+1)D\},\;
    \beta^c:=\{1,\ldots,(L+1)D\}\setminus\beta.
\end{align*}
as follows.

\begin{definition}[$(\beta,L)$-Rank]\label{def:ranks}
    The $\beta$-rank of a tensor
    $\bs f\in\VblSD$ is defined as the smallest $r_\beta(\bs f)\in\N_{\geq 0}$ such that
    \begin{align*}
        \bs f=\sum_{k=1}^{r_\beta(\bs f)}\bs v_\beta^k\otimes\bs w_{\beta^c}^k,\quad
        \bs v_\beta^k\in\bs V_\beta,\;
        \bs w_{\beta^c}^k\in\bs V_{\beta^c}.
    \end{align*}
    \revQ{\begin{remark}
    Given a basis of $S$ with $\dim(S) = k$, $\boldsymbol{f}$ is identified with an algebraic  tensor $\boldsymbol{C}$ in $(\R^{b})^{\otimes (LD)} \otimes (\R^{k})^{(\otimes D)}$ (see Remark \ref{rem:coefficient_tensor}), and the rank $r_\beta(\boldsymbol{f}) = \operatorname{rank}_\beta(\boldsymbol{C}),$ that is the rank of the $\beta$-matricization (or $\beta$-unfolding) of the tensor $\boldsymbol{C}$. Thus, any low-rank representation of a functional tensor    $\boldsymbol{f}$ corresponds an equivalent  low-rank representation of the tensor $\boldsymbol{C}$ of its coefficients in a particular basis. 
    \end{remark}
}
    For a function $f\in L^p([0,1)^D)$, the $(\beta,L)$-rank
    $r_{\beta,L}(f)$ of $f$ is defined as
    \begin{align*}
        r_{\beta,L}(f):=r_\beta(\Tbl(f)).
    \end{align*}
    For $\beta=\{1,\ldots,\nu\}$, we abbreviate
    $r_{\{1,\ldots,\nu\}}(\bs f)$ and $r_{\{1,\ldots,\nu\},L}(f)$ as
    $r_{\nu}(\bs f)$ and $r_{\nu,L}(f)$, respectively.
\end{definition}

For a given $\bs f\in\VblSD$ not every combination
of ranks is possible. \revZ{In particular, we have the following result. }
\begin{lemma}[Admissible Ranks]\label{lemma:admissranks}
    Let $\bs f\in\VblSD$ and $\beta\subset\{1,\ldots,(L+1)D\}$ for some
    level $L\in\N_{\geq 0}$. Then, it holds
    $r_\beta(\bs f)=r_{\beta^c}(\bs f)$, and, for any partition
    $\beta=\gamma\cup\alpha$, it holds
    \begin{align*}
        r_\beta(\bs f)\leq r_\gamma(\bs f)r_\alpha(\bs f),
    \end{align*}
    and in particular,
    \begin{alignat*}{3}
        r_{\nu+1}(\bs f)&\leq br_\nu(\bs f)&&\quad
        \quad
        r_\nu(\bs f)\leq br_{\nu +1}(\bs f),\quad
        &&1\leq \nu\leq LD-1,\\
        r_{\nu+1}(\bs f)&\leq \dim(S)r_\nu(\bs f)&&\quad
        \quad
        r_\nu(\bs f)\leq\dim(S)r_{\nu +1}(\bs f),\quad
        &&LD\leq \nu\leq (L+1)D-1.
    \end{alignat*}
\end{lemma}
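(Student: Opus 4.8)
The plan is to reduce everything to the two standard facts about ranks of order-two tensors (matrices) via matricization, exactly as in the case $D=1$ treated in Part~I; only the number of tensor factors changes and no step uses $D=1$. So I would first recall that $r_\beta(\bs f)$ coincides with $\dim\Umin_\beta(\bs f)$, where $\Umin_\beta(\bs f)\subseteq\bs V_\beta$ is the smallest subspace $U$ with $\bs f\in U\otimes\bs V_{\beta^c}$; equivalently, under the identification $\VblSD\cong\bs V_\beta\otimes\bs V_{\beta^c}$, it is the rank of the $\beta$-matricization $\mathcal M_\beta(\bs f)$, the linear map naturally associated with $\bs f$. Since $\dim V_\nu<\infty$ for every $\nu$, this is finite and the usual linear-algebra characterisations apply; in particular $\mathcal M_\beta(\bs f)$ and $\mathcal M_{\beta^c}(\bs f)$ are mutual transposes and hence have equal rank, giving $r_\beta(\bs f)=r_{\beta^c}(\bs f)$.

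For the submultiplicativity $r_{\gamma\cup\alpha}(\bs f)\le r_\gamma(\bs f)\,r_\alpha(\bs f)$ with $\gamma,\alpha$ disjoint, I would prove the subspace inclusion $\Umin_{\gamma\cup\alpha}(\bs f)\subseteq\Umin_\gamma(\bs f)\otimes\Umin_\alpha(\bs f)$ and then take dimensions. Concretely: expand $\bs f$ in a basis of $\Umin_\gamma(\bs f)$ with coefficient tensors lying in $\bs V_{\gamma^c}=\bs V_\alpha\otimes\bs V_{(\gamma\cup\alpha)^c}$, then expand each of those coefficient tensors in a basis of the corresponding minimal subspace inside $\bs V_\alpha$; regrouping the two sums shows $\bs f\in\Umin_\gamma(\bs f)\otimes\Umin_\alpha(\bs f)\otimes\bs V_{(\gamma\cup\alpha)^c}$, which yields the claimed inclusion and thus $r_{\gamma\cup\alpha}(\bs f)\le\dim\Umin_\gamma(\bs f)\cdot\dim\Umin_\alpha(\bs f)=r_\gamma(\bs f)\,r_\alpha(\bs f)$. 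The one point that needs care is that the $\alpha$-minimal subspace appearing after the first expansion really is contained in $\Umin_\alpha(\bs f)$: this is the standard stability property that minimal subspaces do not grow under contraction with the complementary factors, which I would either cite from Part~I / Hackbusch or verify by a short basis argument. This is the main obstacle; the rest is bookkeeping.

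Finally I would specialise. For $1\le\nu\le LD-1$, take $\gamma=\{1,\ldots,\nu\}$ and $\alpha=\{\nu+1\}$, so that $r_{\nu+1}(\bs f)=r_{\gamma\cup\alpha}(\bs f)\le r_\nu(\bs f)\,r_{\{\nu+1\}}(\bs f)\le r_\nu(\bs f)\dim V_{\nu+1}=b\,r_\nu(\bs f)$, using that $r_{\{\nu+1\}}(\bs f)$ is the rank of a matrix with $\dim V_{\nu+1}$ rows and that $V_{\nu+1}=\R^{I_b}$ in this index range. For the reverse bound, use $r_\nu(\bs f)=r_{\{1,\ldots,\nu\}}(\bs f)=r_{\{\nu+1,\ldots,(L+1)D\}}(\bs f)$ and split off the singleton $\{\nu+1\}$ on that side: $r_\nu(\bs f)\le r_{\{\nu+1\}}(\bs f)\,r_{\{\nu+2,\ldots,(L+1)D\}}(\bs f)=r_{\{\nu+1\}}(\bs f)\,r_{\nu+1}(\bs f)\le b\,r_{\nu+1}(\bs f)$, again using complement symmetry for the middle equality. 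The case $LD\le\nu\le(L+1)D-1$ is identical, with $\dim V_{\nu+1}=\dim S$ replacing $b$.
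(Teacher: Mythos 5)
Your proposal is correct and takes essentially the same approach the paper relies on: the paper's proof is just a citation of the analogous one-dimensional result in Part I, and your argument -- identifying $r_\beta$ with the matricization rank, getting $r_\beta=r_{\beta^c}$ by transposition, proving submultiplicativity via the inclusion $\Umin_{\gamma\cup\alpha}(\bs f)\subseteq\Umin_\gamma(\bs f)\otimes\Umin_\alpha(\bs f)$ (using the standard stability of minimal subspaces under contraction), and then specializing with $\dim V_{\nu+1}=b$ or $\dim S$ -- is exactly that standard argument carried over to $(L+1)D$ modes. No gaps worth flagging.
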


\begin{proof}
    See \cite[Lemma 2.12 and Lemma 2.23]{ali2023approximation}.
\end{proof}

A concept closely related to ranks are
\emph{minimal subspaces}.

\begin{definition}[Minimal Subspaces]
    For $\bs f\in\VblSD$ and $\beta\subset\{1,\ldots,(L+1)D\}$,
    the \emph{minimal subspace} $\Umin_\beta(\bs f)$
    of $\bs f$ is the smallest
    subspace $\bs U_\beta\subset\bs V_\beta$ such that $\bs f\in\bs U_\beta
    \otimes\bs V_{\beta^c}$, and its dimension is
    \begin{align*}
        \dim(\Umin_\beta(\bs f))=r_\beta(\bs f).
    \end{align*}
\end{definition}

For certain unfolding modes $\beta$, it is helpful to picture
$\Umin_\beta(\bs f)$ as the space of linear combinations of partial
evaluations of $\bs f$ (see also \cite[Figure 2]{ali2023approximation}).

\begin{definition}[Partial Evaluations]
    For $\bs f\in\Vbl$, $\beta\subset\{1,\ldots,LD\}$ and any
    $\nu_\beta:=(\nu_1,\ldots,\nu_{\#\beta})\in I_b^{\#\beta}$,
    we let $\bs f(\nu_\beta,\cdot)
    \in\bs V_{\beta^c}$ be a \emph{partial evaluation} of $\bs f$
    (i.e., an evaluation at $\nu_\beta$ for the variables
    of modes in $\beta$). Note that we have the
    identity (see \cite[Lemma 2.10]{ali2023approximation})
    \begin{align*}
        \Umin_{\beta^c}(\bs f)=\linspan\left\{\bs f(\nu_\beta,\cdot):\nu_\beta\in
        I_b^{\#\beta}\right\}\quad\text{and}\quad
        \dim(\Umin_\beta(\bs f))=\dim(\Umin_{\beta^c}(\bs f)).
    \end{align*}
\end{definition}

As we saw in \Cref{def:ranks}, a function $f\in L^p([0,1)^D)$ can be
associated with different levels $L\in\N_{\geq 0}$, and, in particular,
the ranks $r_{\beta,L}(f)$ may depend on the level $L$.
From \Cref{thm:hierarchy}, we know
$\vblSD\subset V_{b,L+1,S^D}\subset\ldots$. In order
to guarantee this hierarchy property, the type of \emph{level extension}
from $\vblSD$ to $V_{b,L+1,S^D}$ as implied by
\Cref{def:tensormap} is essential.

Unlike in the one-dimensional case, for $D>1$ there are many valid strategies
for increasing the representation level. E.g.,
a natural approach would be to tensorize each of the $D$ spatial dimensions
\emph{separately}, leading to $D$ level parameters $(L_1,\ldots,L_D)$.
The notion of
higher/lower level may thus be, in general, not well-defined anymore
since we only have a partial (inclusion) ordering on the set of tensor
spaces of different levels -- due to the presence of several coordinate
directions.
Moreover, even if the hierarchy property from \Cref{thm:hierarchy} is guaranteed,
for approximation with \emph{controlled} complexity we need to relate
the ranks of $r_{\beta,L}(f)$ for different $L$.
We will return to this issue in \Cref{sec:appclasses}, where we will see that
a specific choice of level extension and tensor networks guarantees
that the resulting approximation classes are (quasi-) Banach spaces.


\subsection{Tree Tensor Networks}
Let $T$ be a collection of subsets of $\{1,\ldots,(L+1)D\}$.
For a rank vector $\bs r=(r_\beta)_{\beta\in T}\in\N_{\geq 0}^{\#T}$,
we define the set of tensors in $\VblSD$ with ranks bounded by
$\bs r$ as
\begin{align*}
    \T^{T}_{\bs r}(\VblSD):=\left\{
    \bs f\in\VblSD:\;r_\beta(\bs f)\leq r_\beta,\;
    \beta\in T\right\}.
\end{align*}
We call $\T^{T}_{\bs r}(\VblSD)$ a \emph{tree-based} tensor
format if $T$ is a dimension
partition tree
(or a subset of a dimension partition tree).
In this case a tensor
$\bs f\in \T^{T}_{\bs r}(\VblSD)$ admits a parametrization with low-order
tensors and can thus be interpreted as a
\emph{tree tensor network}.
We will work with one particular type of networks.

\begin{definition}[Tensor Train (TT) Format]$\,$\\
    For $T:=\left\{\{1\},\{1,2\},\ldots,
    \{1,\ldots,(L+1)D-1\}\right\}$
    (a subset of a linear tree) and
    $\bs r=(r_\nu)_{\nu=1}^{(L+1)D-1}$,
    we call
    \begin{align*}
            \TT_{\bs r}(\VblSD):=\T^{T}_{\bs r}(\VblSD)
    \end{align*}
    the set of tensors in the \emph{tensor train (TT)} format.
\end{definition}

If $\{\varphi_k:\;1\leq k\leq\dim S\}$ is a basis of $S$,
a tensor $\bs f\in\TT_{\bs r}(\VblSD)$ admits a representation
\begin{align}\label{eq:rep}
    &\bs f(i_1^1,\ldots,i_D^L,\bar x_1,\ldots\bar x_D)
    = \\
    &\sum_{k_1=1}^{r_1}\cdots
    \sum_{k_{(L+1)D}=1}^{r_{(L+1)D}}
    \sum_{n_1,\ldots,n_D=1}^{\dim S}
     U_1(i_1^1, k_1)\cdots U_{LD}(k_{LD-1},i_D^L,k_{LD})\notag \\
     \quad \quad &U_{LD+1}(k_{LD},n_1,k_{LD+1})\varphi_{n_1}(\bar x_1)\cdots
    U_{(L+1)D}(k_{(L+1)D-1},n_D)\varphi_{n_D}(\bar x_D),\notag
\end{align}
with parameters
$U_1\in\R^{b\times r_1}$,
$U_\nu\in\R^{r_{\nu-1}\times b\times r_\nu}$ for $2\leq \nu\leq LD$,
$U_\nu\in\R^{r_{\nu-1}\times\dim S\times r_\nu}$ for $LD+1\leq \nu\leq (L+1)D-1$,
and $U_{(L+1)D}\in\R^{r_{(L+1)D-1}\times\dim S}$.
The parameters
\begin{align*}
    \bs U:=(U_1,\ldots, U_{(L+1)D})\in
    \PbS:=\R^{b\times r_1}\times\ldots\times\R^{r_{(L+1)D-1}\times\dim S},
\end{align*}
form a tree tensor network,
see \Cref{fig:tns}. Note that such tensor networks can also be identified with a recurrent
sum-product neural network,
where $r_\nu$ is the number of neurons in layer
$\nu$ (see \cite{Khrulkov2018Feb}).

\begin{figure}[ht!]
        \centering
        \tikzset{every picture/.style={line width=0.75pt}} 

\tikzset{every picture/.style={line width=0.75pt}} 
\begin{tikzpicture}[x=0.75pt,y=0.75pt,yscale=-1,xscale=1]

\draw   (84.97,16.34) -- (136.13,16.34) -- (136.13,60.83) -- (84.97,60.83) -- cycle ;

\draw   (169.75,15.22) -- (220.91,15.22) -- (220.91,59.72) -- (169.75,59.72) -- cycle ;
\draw    (136.13,37.47) -- (170.23,37.47) ;
\draw    (221.64,37.47) -- (255.74,37.47) ;
\draw    (278.64,37.47) -- (312.75,37.47) ;
\draw   (313,15.22) -- (364.16,15.22) -- (364.16,59.72) -- (313,59.72) -- cycle ;
\draw    (364.16,36.41) -- (397.82,36.41) ;
\draw    (449.64,36.64) -- (473.31,36.64) ;
\draw    (111.13,60.47) -- (111.13,86.83) ;
\draw    (111.13,100) node[] {$i_1^1$} ;
\draw    (194.13,59.47) -- (194.13,85.83) ;
\draw    (194.13,100) node[] {$i_2^1$} ;

\draw    (340.13,60.47) -- (340.13,86.83) ;
\draw    (340.13,100) node[] {$i_L^D$} ;
\draw   (449.12,13.32) -- (449.18,63.89) -- (398.56,63.94) -- (398.5,13.37) -- cycle ;
\draw   (443.23,97.12) -- (443.29,147.69) -- (405.67,147.74) -- (405.61,97.17) -- cycle ;
\draw    (423.89,63.91) -- (423.93,97.63) ;
\draw    (423.98,148.44) -- (423.02,170.32) ;
\draw    (423.02,188) node[] {$\bar x_1$} ;

\draw    (501.35,36.58) -- (535.01,36.58) ;
\draw   (594.32,13.49) -- (594.38,64.06) -- (535.76,64.11) -- (535.7,13.53) -- cycle ;
\draw   (587.43,97.29) -- (587.49,147.86) -- (546.87,147.91) -- (546.81,97.34) -- cycle ;
\draw    (567.08,64.08) -- (567.12,97.79) ;
\draw    (567.18,148.61) -- (567.21,170.32) ;
\draw    (567.18,188) node[] {$\bar x_D$} ;


\draw (96.81,27.93) node [anchor=north west][inner sep=0.75pt]    {$U_{1}$};
\draw (181.6,26.82) node [anchor=north west][inner sep=0.75pt]    {$U_{2}$};
\draw (257.09,35.99) node [anchor=north west][inner sep=0.75pt]    {$\ldots $};
\draw (319.84,26.82) node [anchor=north west][inner sep=0.75pt]    {$U_{L D}$};
\draw (474.5,34.15) node [anchor=north west][inner sep=0.75pt]    {$\dotsc $};
\draw (419.95,115.11) node [anchor=north west][inner sep=0.75pt]    {$\varphi$};
\draw (400.76,27.31) node [anchor=north west][inner sep=0.75pt]    {$U_{L D + 1}$};
\draw (559.13,115.28) node [anchor=north west][inner sep=0.75pt]    {$\varphi$};
\draw (540.93,27.48) node [anchor=north west][inner sep=0.75pt]    {$U_{L D + D}$};

\end{tikzpicture}
        \caption{A tensor diagram corresponding to $\bs f = \RbS(\bs U)$.}
        \label{fig:tns}
\end{figure}

With \cref{eq:rep} we can define a representation map
\begin{align*}
    \RbS:\PbS\rightarrow(\Tbl)^{-1}\left(\TT_{\bs r}(\VblSD)\right)
    \subset L^p([0,1)^D).
\end{align*}
The basis of our analysis of nonlinear approximation
in the next section will be the following set.

\begin{definition}[TT-Functions]\label{def:ttfuncs}
    For $L\in\N_{\geq 0}$, a finite-dimensional space $S$ and a
    finite TT-rank vector $\bs r=(r_\nu)_{\nu=1}^{(L+1)D-1}$,
    we define a set of TT-functions
    as
    \begin{align*}
        \PhiblS:=\left\{
        \varphi=\RbS(\bs U):\;\bs U\in\PbS\right\}.
    \end{align*}
\end{definition}


\section{Approximation Classes}\label{sec:appclasses}
Approximation classes are sets of elements that can be
approximated with a certain rate by a pre-specified approximation tool.
It is a powerful analysis instrument for studying
approximability properties of a given tool.
\revQ{These classes were extensively studied 
for piece-wise polynomial and wavelet approximations}, see \cite{devore_1998}.
In this work and in \cite{ali2023approximation},
we apply this machinery to the study
of TN-approximation. We will see that for certain classes
one can derive strong statements about their properties.


\subsection{Basic Notions}
Let $X$ be a quasi-normed vector space and consider
subsets $\Phi_n\subset X$ for any $n\in\N_{\geq 0}$.
For the \emph{approximation tool} $\Phi:=(\Phi_n)_{n\in\N_{\geq 0}}$,
define the best approximation error
\begin{align*}
    E_n(f):=E(f, \Phi_n)_X:=\inf_{\varphi\in\Phi_n}\|f-\varphi\|_X.
\end{align*}

\begin{definition}[Approximation Classes \revQ{\cite[Section 4.1]{devore_1998}}
]\label{def:appclasses}
    For any $f\in X$, $\alpha>0$ and $0<q\leq\infty$,
    define the quasi-norm
    \begin{align*}
        \norm{f}[\Asq]:=
        \begin{cases}
            \left(\sum_{n=1}^\infty
            [n^\alpha E_{n-1}(f)]^q\frac{1}{n}\right)^{1/q},&\quad
            0<q<\infty,\\
            \sup_{n\geq 1}[n^\alpha E_{n-1}(f)],&\quad q=\infty.
        \end{cases}
    \end{align*}
    The approximation class $\Asq(X)$ of $\Phi$ is defined as
    \begin{align*}
        \Asq(X):=\Asq(X,\Phi):=\left\{f\in X:\;
        \norm{f}[\Asq]<\infty\right\}.
    \end{align*}
\end{definition}
\revQ{The class $A_{\infty}^\alpha(X)$ contains functions for which the sequence $(n^\alpha E_{n-1}(f))$ is  bounded. Belonging to the class $A_{q}^\alpha(X)$ is a slightly stronger condition since it requires that the sequence $([n^\alpha E_{n-1}(f)]^q)$ is summable with weight $1/n$}.
The following properties will be \revQ{used}
for analyzing the set
$\Asq(X)$.
\begin{enumerate}[label=(P\arabic*)]
    \item\label{P1} $0\in\Phi_n$ and $\Phi_0=\{0\}$.
    \item\label{P2} $\Phi_n\subset\Phi_{n+1}$.
    \item\label{P3} $c\Phi_n=\Phi_n$ for any $c\in\R\setminus\{0\}$.
    \item\label{P4} $\Phi_n+\Phi_n\subset\Phi_{cn}$ for some $c>0$.
    \item\label{P5} $\bigcup_{n=0}^\infty\Phi_n$ is dense in $X$.
    \item\label{P6} $\Phi_n$ is proximinal in $X$, i.e., each
    $f\in X$ has a best approximation in $\Phi_n$.
\end{enumerate}

Properties \ref{P1} -- \ref{P3} are typically easy to satisfy given an
appropriate definition of $\Phi_n$.
Property \ref{P4} is needed for $\Asq(X)$ to be a vector space and it
is in fact sufficient to derive most properties of $\Asq(X)$.
It restricts the degree of nonlinearity in $\Phi_n$ and
endows $\Asq(X)$ with a lot of structure.
Properties \ref{P5} -- \ref{P6} are mostly required for
deriving inverse embeddings for $\Asq(X)$.
Since in this work we will only show lack of inverse embeddings,
these two properties are not essential for the present exposition.


\subsection{Tensor Network Approximation Spaces}\label{sec:p4}

As our approximation tool $\Phi$,
we consider TT-functions
from \Cref{def:ttfuncs}
with different levels and ranks,
\begin{align*}
    \Phi:=\left(\PhiblS\right)_{L\in\N_{\geq 0},\bs r\in\N_{\geq 0}^{(L+1)D-1}}.
\end{align*}

\begin{remark}[Different Network Topologies]
    Note that, even though we haven't yet defined subsets of
    finite complexity $\Phi_n$, the definition of $\Phi$
    already fixes a particular type of \revQ{tree tensor networks, using a particular ordering of variables and a TT network (linear tree)}.
    In principle, we \revQ{could} define $\Phi$ as the collection of
    \rev{tree tensor networks representations} \revQ{where the tree is also a parameter}, \revZ{that includes the set  of TT networks with different orderings of the variables}.
    However, the resulting $\Phi$ would be ``too nonlinear''
    in the sense that we would not be able to define $\Phi_n$
    in such a way as to guarantee \ref{P4} and thus the resulting
    class $\Asq$ would \revQ{not be a linear space}. 
\end{remark}

As in \cite{ali2023approximation}, we introduce \rev{two} complexity measures.
\begin{definition}[Complexity Measures]
    For $\bs U\in\PbS$ we define
    \begin{enumerate}[label=(\roman*)]
        \item   \revQ{the \emph{representation} complexity}
        \begin{align*}
            \full(\bs U):=br_1+b\sum_{\nu=2}^{LD}r_{\nu-1}r_\nu+
            \dim S\sum_{\nu=LD+1}^{(L+1)D-1}r_{\nu-1}r_\nu
            +r_{(L+1)D-1}\dim S,
        \end{align*}
        \item    \revQ{the \emph{sparse} complexity}
        \begin{align*}
            \sparse(\bs U):=\sum_{\nu=1}^{(L+1)D-1}\norm{U_\nu}[\ell^0],
        \end{align*}
        where $\norm{U_\nu}[\ell^0]$ is the number of non-zero entries
        in $U_\nu$,
    \end{enumerate}
\end{definition}
\revQ{The above complexity measures are closely related to
complexity measures for feedforward neural networks: the  
representation complexity corresponds to the number of parameters of
        the neural network, while the sparse complexity corresponds to the number of nonzero parameters of the network. }
\par 
For any function $\varphi\in V_{b,S}^D$, we can define
\begin{align*}
    \full(\varphi):=\min
    \left\{
    \full(\bs U): \RbS(\bs U) = \varphi \text{ for some } L\in\N_0,
    \;\bs r\in\N_{\geq 0}^{(L+1)D-1}
    \right\},
\end{align*}
and analogously for $\sparse(\varphi)$.
\begin{definition}[Approximation Tool]\label{def:tools}
    In accordance with the previous definition, we define
    for any $n\in\N_{\geq 0}$
    \begin{align*}
        \PhiF_n&:=\left\{\varphi\in V^D_{b,S}:\;
        \full(\varphi)\leq n\right\},
    \end{align*}
    and analogously for $\PhiS_n$.
\end{definition}

For these approximation tools it is straight-forward to verify

\begin{lemma}[\ref{P1} -- \ref{P3} and \ref{P5} -- \ref{P6}]\label{lemma:ps}
    Let $S\subset L^p([0,1))$ be closed under $b$-adic dilation
    and $\dim(S)<\infty$. Then,
    \begin{enumerate}[label=(\roman*)]
        \item    \rev{The sets} $\PhiF$ and $\PhiS$ 
        both satisfy
        \ref{P1} -- \ref{P3}. For $0<p<\infty$ and if $S$ contains the constant
        function one, they also satisfy \ref{P5}.
        \item    The set $\PhiF$ 
        satisfies \ref{P6}
        for $0<p\leq\infty$, while $\PhiS$ does not.
    \end{enumerate}
\end{lemma}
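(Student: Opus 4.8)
The plan is to verify the six properties by unwinding the definitions of the three approximation tools and repeatedly invoking \Cref{thm:hierarchy} and \Cref{lemma:admissranks}. For \ref{P1}: the zero function is represented by taking all parameter tensors $U_\nu$ to be zero (equivalently, rank vector $\bs r = \bs 0$), so $0\in\PhiF_n$ for all $n$, and $\PhiF_0$ consists precisely of functions with a zero-cost representation, which forces every rank to be zero and hence $\PhiF_0=\{0\}$; the same holds for $\PhiS$ and $\PhiN$ since $\sparse(\bs 0)=\neuron(\bs 0)=0$. For \ref{P3}: scaling $\varphi$ by $c\neq 0$ can be absorbed into, say, the first parameter tensor $U_1$, which changes none of the three complexity measures, so $c\PhiF_n=\PhiF_n$, and likewise for the other two. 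For \ref{P2}: if $\full(\varphi)\leq n$ then trivially $\full(\varphi)\leq n+1$, so $\PhiF_n\subset\PhiF_{n+1}$; identically for $\PhiS$ and $\PhiN$. These three are the routine part.

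The substantive point in part (i) is \ref{P5}, density. Here I would argue that $\bigcup_n \PhiF_n \supset V^D_{b,S}$ in the following sense: every $\varphi\in V^D_{b,S}=\bigcup_L \vblSD$ lies in some $\vblSD$, hence its tensorization $\Tbl(\varphi)$ is a genuine (finite) tensor in $\VblSD$ with some finite TT-rank vector $\bs r$, so $\varphi=\RbS(\bs U)$ for a suitable finite-dimensional parameter $\bs U$, giving $\full(\bs U)<\infty$ and therefore $\varphi\in\PhiF_n$ for $n:=\full(\bs U)$. Thus $\bigcup_n\PhiF_n = V^D_{b,S}$, and by \Cref{thm:hierarchy}(iii) this union is dense in $L^p([0,1)^D)$ for $0<p<\infty$ when $S$ contains the constant function one (recall $S^D=(S)^{\otimes D}$ then contains the constant one). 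The same reasoning applies verbatim to $\PhiS$ (since $\sparse(\bs U)\leq\full(\bs U)<\infty$) and to $\PhiN$ (since $\neuron(\bs U)=\|\bs r\|_{\ell^1}<\infty$). I expect this to be the main obstacle only in the bookkeeping sense: one must be careful that enlarging the level $L$ does not ruin membership — but this is exactly what closedness under $b$-adic dilation buys us via \Cref{thm:hierarchy}(i)–(ii), so the representation at level $L$ can always be lifted to level $L+1$ without changing the function.

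For part (ii), proximinality: the key observation is that for fixed $n$, the set $\PhiF_n$ (resp. $\PhiN_n$) is a \emph{finite union}, over the finitely many admissible pairs $(L,\bs r)$ with $\full\leq n$ (resp. $\neuron\leq n$), of sets of the form $(\Tbl)^{-1}(\TT_{\bs r}(\VblSD))$. I would show each such set is closed in $L^p([0,1)^D)$: via the isometry $\Tbl$ it suffices to show $\TT_{\bs r}(\VblSD)$ is closed in the finite-dimensional space $\VblSD$, which follows from lower semicontinuity of each $\beta$-rank $r_\beta(\cdot)$ (a rank-at-most-$r_\beta$ set is closed, being cut out by the vanishing of certain $(r_\beta{+}1)$-minors of the $\beta$-matricization). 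Note the crucial finiteness: a bound $\full(\bs U)\leq n$ forces each $r_\nu\leq n$ and — here one uses \Cref{lemma:admissranks} together with $\dim(S)<\infty$ — also forces the level $L$ to be bounded (each factor contributes at least a fixed positive cost), so only finitely many $(L,\bs r)$ occur; the same holds for $\neuron$. A finite union of closed sets is closed, and a closed subset of $L^p$ that is nonempty and, by boundedness of ranks and level, contained in a finite-dimensional locally compact piece admits a best approximation — more precisely, one minimizes the continuous function $\varphi\mapsto\|f-\varphi\|_p$ over the closed set; coercivity is immediate since $\|f-\varphi\|_p\to\infty$ as $\|\varphi\|_p\to\infty$, so a minimizing sequence is bounded, and within each finite-dimensional stratum one extracts a convergent subsequence whose limit stays in $\PhiF_n$ (resp. $\PhiN_n$) by closedness. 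This gives \ref{P6} for $\PhiF$ and $\PhiN$. Finally, $\PhiS$ fails \ref{P6}: here $\sparse(\bs U)\leq n$ bounds the number of nonzero entries but \emph{not} the level $L$ nor the individual ranks, so $\PhiS_n$ is a countable union of finite-dimensional pieces of unbounded dimension and is generally not closed — I would record the standard counterexample, namely a target $f$ approached by sparse representations of ever-increasing level whose limit provably requires more than $n$ nonzero parameters at every finite level, paralleling the one-dimensional construction in \cite{partI}.
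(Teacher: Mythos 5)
Your proposal is correct and follows essentially the same route as the paper, which simply defers to the one-dimensional argument of \cite[Theorem 3.17]{partI}: \ref{P1}--\ref{P3} by direct inspection of the representation map, \ref{P5} by identifying $\bigcup_n\Phi_n$ with $V^D_{b,S}$ and invoking the density statement of \Cref{thm:hierarchy}, and \ref{P6} for $\PhiF$ and $\PhiN$ by noting that a complexity bound forces finitely many admissible pairs $(L,\bs r)$, so the tool is a finite union of closed bounded-rank sets inside a single finite-dimensional space. The only place you stop short of a full argument is the failure of \ref{P6} for $\PhiS$, where you defer to the counterexample construction of \cite{partI} (sparse cost does not bound the level, so $\PhiS_n$ is not closed); this matches the level of detail of the paper's own proof, which is just the citation.
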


\begin{proof}
    \rev{
    Properties \ref{P1} -- \ref{P3} are obvious. \ref{P5} follows from the fact that $$\bigcup_{n \in \N} \Phi_n = \bigcup_{L\in\N_{\geq 0}} \bigcup_{\bs r\in\N_{\geq 0}^{(L+1)D-1}} \PhiblS = \bigcup_{L\in\N_{\geq 0}} V_{b,L , S^{\otimes D}}^D = V_{b,S^{\otimes D}}^D,$$ and \Cref{thm:hierarchy}(iii). The proof that $\PhiF$ satisfies \ref{P6} follows the one of \cite[Lemma 3.14]{ali2023approximation}. More precisely, observe that $\PhiF_n$ is a finite union of subsets $\Phi_{b,L,S,\bs{r}}$, each being  isometrically identified to $ \TT_{\bs r}(\VblSD)
   $ through the map $T_{b,L}^D$ (\Cref{th:isometry}(ii)).  The set of tensors $\TT_{\bs r}(\VblSD)$ with bounded TT-rank being a closed subset of $\VblSD$ \cite[Lemma 8.6]{Hackbusch2012}, we deduce that $\PhiF_n$ is a closed subset of the finite-dimensional space $V_{b,L, S^{\otimes D}}^D$, hence it is proximinal in $L^p$ for any $0<p\le + \infty $. To prove that $\PhiS_n$ is not proximinal, we can follow   \cite[Example 3.15]{ali2023approximation} to construct  a sequence in  $\PhiS_n$ which converges to an element in $\PhiS_{n'}$ with $n'>n$.}
\end{proof}

Thus, it remains to check \ref{P4}. To this end, we need
to relate the ranks of a function for different representation levels
and find a particular sparse representation on a finer level
to estimate $\sparse(\cdot)$.

\begin{lemma}[Ranks $r_{\beta,L}(f)$ for Different $L$]\label{lemma:rankext}
    Let $\varphi\in\Phi_{b,L_A,S,\bs r^A}$ and assume $S$ is closed
    under $b$-adic dilation.
    Then, for any $L_B\geq L_A$,
    $\varphi\in\Phi_{b,L_B,S,\bs r^B}$ with $\bs r^B\in\N_{\geq 0}^{(L_B+1)D-1}$
    satisfying
    \begin{alignat*}{2}
        r_\nu^B&=r_\nu^A\leq\min\left\{
        b^\nu,(\dim S)^Db^{L_AD-\nu}\right\},\quad
        &&1\leq\nu\leq L_AD,\\
        r^B_\nu&\leq\min\left\{b^\nu,(\dim S)^D\right\},\quad &&
        L_AD+1\leq\nu\leq L_BD,\\
        r^B_\nu&\leq\min\left\{b^{L_BD}
        (\dim S)^{\nu-L_BD},(\dim S)^{(L_B+1)D-\nu}\right\},\quad &&
        L_BD+1\leq\nu\leq (L_B+1)D-1.
    \end{alignat*}
\end{lemma}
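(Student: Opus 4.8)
The statement is about level extension in the TT format: a function representable at level $L_A$ with ranks $\bs r^A$ is also representable at any finer level $L_B \geq L_A$, and the ranks at the new level obey explicit bounds. The natural strategy is to reduce to a single level increment $L_A \to L_A + 1$ and then iterate, since the general case follows by composing the single-step extension $L_B - L_A$ times. So the core of the argument is: given $\varphi \in V^D_{b,L_A,S^D}$ (with $S^D = S^{\otimes D}$), exhibit a concrete TT representation of $\varphi$ at level $L_A+1$ and read off its ranks.

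First I would recall the structure of the level-extension map implied by \Cref{def:tensormap}. Going from level $L$ to $L+1$ means that each spatial variable $\bar x_\mu$ on $[0,1)$ gets split into one more $b$-adic digit $i_\mu^{L+1} \in I_b$ and a rescaled remainder $\bar x_\mu' \in [0,1)$. Concretely, a function $g \in S$ restricted to the $b$-adic subinterval indexed by $k \in I_b$ is, after rescaling, $g(b^{-1}(\cdot + k))$, which lies in $S$ again precisely because $S$ is closed under $b$-adic dilation. Applied to the tensor-train representation \cref{eq:rep}: the last $D$ cores $U_{LD+1},\ldots,U_{(L+1)D}$, which carry the $S$-factors $\varphi_{n_\mu}(\bar x_\mu)$, must each be replaced by a pair of cores — a new "digit" core handling $i_\mu^{L+1}$ and then a new "function" core handling $\bar x_\mu'$ — with the dilation expansion $\varphi_n(b^{-1}(\bar x' + k)) = \sum_{n'} M^{(k)}_{n,n'}\varphi_{n'}(\bar x')$ absorbed into the entries of the new cores (the matrix $M^{(k)} \in \R^{\dim S \times \dim S}$ exists by closure under dilation). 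This is essentially the $D$-fold analogue of the one-dimensional level-extension construction from part I, and I would invoke \cite[Theorem 3.17]{partI} or the surrounding machinery rather than redo it from scratch.

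Having established representability at each finer level, the rank bounds are then obtained by a counting argument using \Cref{lemma:admissranks} together with the minimal-subspace / partial-evaluation picture. The three regimes correspond to: (i) the "old" digit modes $1 \leq \nu \leq L_A D$, where the rank is unchanged, $r^B_\nu = r^A_\nu$ — one then bounds it both by $b^\nu$ (multiplying the single-mode dimensions $b$ on modes $1,\ldots,\nu$ via the product bound $r_\beta \leq \prod r_{\{i\}}$) and by $(\dim S)^D b^{L_A D - \nu}$ (using $r_\nu = r_{\nu^c}$ and the product bound over the complementary modes, which consists of $L_A D - \nu$ digit modes of dimension $b$ and the $D$ original $S$-modes of dimension $\dim S$); (ii) the "new" digit modes $L_A D + 1 \leq \nu \leq L_B D$, where $\beta = \{1,\ldots,\nu\}$ again has a complement containing all remaining modes, and bounding from the complementary side gives $(\dim S)^D$ (the $D$ innermost function modes, the other digit modes contributing nothing beyond that since their minimal subspaces are already captured), while the direct side gives $b^\nu$; (iii) the final function modes $L_B D + 1 \leq \nu \leq (L_B+1)D - 1$, where the direct side's relevant nontrivial content is the $b^{L_B D}$ from all digit modes times $(\dim S)^{\nu - L_B D}$ from the function modes already absorbed, and the complementary side gives $(\dim S)^{(L_B+1)D - \nu}$. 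I would double-check each of these by writing $\beta$ and $\beta^c$ explicitly and applying $r_\beta(\bs f) \leq \prod_{\mu} \dim(V_\mu)$ over whichever side has fewer/smaller factors, then using $r_\beta = r_{\beta^c}$.

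The main obstacle I anticipate is bookkeeping in regime (ii): unlike regime (i), the presence of extra digit modes lying "between" the old digit block and the function block could naively suggest a rank bound that grows like $b^{\nu - L_A D}\cdot r^A_{L_A D}$, and one has to argue carefully — via minimal subspaces and the fact that the extension cores for the new digits act by $b$-adic dilation on a fixed $\dim S$-dimensional space — that the rank is genuinely capped by $(\dim S)^D$ rather than growing. Equivalently, $\Umin_{\beta^c}(\bs f)$ for these $\beta$ is a space of partial evaluations living inside $S^{\otimes D}$ (dimension $(\dim S)^D$), which forces the bound; the subtlety is making the "living inside $S^{\otimes D}$" claim precise after the digit cores have been inserted. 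Once that is pinned down, everything else is the product bound of \Cref{lemma:admissranks} applied to explicitly written index sets, and the iteration over single level increments is immediate.
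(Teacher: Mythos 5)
Your proposal takes essentially the same route as the paper's (very terse) proof: representability at the finer level $L_B$ via closure under $b$-adic dilation (the hierarchy of \Cref{thm:hierarchy}), and the three rank regimes via the product bounds of \Cref{lemma:admissranks} combined with the minimal-subspace/partial-evaluation arguments of part I (Lemma 2.27 there). The subtlety you flag in the middle regime is resolved exactly as you indicate: after fixing the first $\nu\geq L_AD$ digits, the partial evaluations are fixed (mixed-level) tensorizations of functions lying in $S^{\otimes D}$ by closure under dilation, so their span has dimension at most $(\dim S)^D$.
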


\begin{proof}
    Follows from \Cref{thm:hierarchy}, \Cref{lemma:admissranks} and with
    analogous arguments as
    in \cite[Lemma 2.23]{ali2023approximation}.
\end{proof}

\begin{theoremEnd}{lemma}[Sparse Complexity for Different $L$]\label{lemma:sparsity}
    Let $S$ be closed under $b$-adic dilation,
    $\varphi\in\Phi_{b, L_A, S,\bs r^A}$
    and $\bs U^A\in\mc P_{b, L_A, S,\bs r^A}$
    with $\varphi=\mc R_{b,L_A,S,\bs r^A}(\bs U^A)$.
    Then, for any $L_B>L_A$ and some
    $\bs r^B\in\N_{\geq 0}^{(L_B+1)D-1}$,
    there exists
    $\bs U^B\in\mc P_{b, L_B, S,\bs r^B}$ such that $\varphi = R_{b,L_B,S,\bs r^B}(\bs U^B)$ 
    with
    \begin{align}\label{eq:sparse}
        \sparse(\bs U^B)\leq
        \sparse(\bs U^A)
        +2b(\dim S)^{2D}(L_B-L_A)D.
    \end{align}
\end{theoremEnd}
\begin{proofEnd}
    We consider representations as in \cref{eq:rep}.
    For a representation on a given level $L_A$,
    we have to find a sparse representation on level $L_B$.
    We illustrate the proof using tensor diagram notation.
    
    For $\bs \varphi=T^D_{b,L_A}(\varphi)$, by definition, we have
    \begin{equation}\label{eq:varphiA}
        \tikzset{every picture/.style={line width=0.75pt}} 
\begin{tikzpicture}[x=0.75pt,y=0.75pt,yscale=-1,xscale=1]

\draw   (88.97,90.34) -- (140.13,90.34) -- (140.13,134.83) -- (88.97,134.83) -- cycle ;

\draw   (173.75,89.22) -- (224.91,89.22) -- (224.91,133.72) -- (173.75,133.72) -- cycle ;
\draw    (140.13,111.47) -- (174.23,111.47) ;
\draw    (225.64,111.47) -- (259.74,111.47) ;
\draw    (282.64,111.47) -- (316.75,111.47) ;
\draw   (317,89.22) -- (368.16,89.22) -- (368.16,133.72) -- (317,133.72) -- cycle ;
\draw    (368.16,109.25) -- (402.26,109.25) ;
\draw   (402.51,87) -- (453.67,87) -- (453.67,131.5) -- (402.51,131.5) -- cycle ;
\draw   (544.51,87) -- (595.67,87) -- (595.67,131.5) -- (544.51,131.5) -- cycle ;
\draw    (453.64,109.47) -- (487.74,109.47) ;
\draw    (510.64,109.47) -- (544.75,109.47) ;
\draw    (115.13,134.47) -- (115.13,160.83) ;
\draw    (198.13,133.47) -- (198.13,159.83) ;
\draw    (344.13,134.47) -- (344.13,160.83) ;
\draw    (429.13,131.47) -- (429.13,157.83) ;
\draw    (570.13,131.47) -- (570.13,157.83) ;

\draw (50.02,102.76) node [anchor=north west][inner sep=0.75pt]    {$\boldsymbol{\varphi }^A =$};
\draw (100.81,101.93) node [anchor=north west][inner sep=0.75pt]    {$U^{A}_{1}$};
\draw (185.6,100.82) node [anchor=north west][inner sep=0.75pt]    {$U^{A}_{2}$};
\draw (260,115) node [anchor=north west][inner sep=0.75pt]    {$\ldots $};
\draw (323.84,100.82) node [anchor=north west][inner sep=0.75pt]    {$U^{A}_{L_{A} D}$};
\draw (414.36,98.59) node [anchor=north west][inner sep=0.75pt]    {$\Phi ^{A}_{1}$};
\draw (554.36,98.59) node [anchor=north west][inner sep=0.75pt]    {$\Phi ^{A}_{D}$};
\draw (490, 115) node [anchor=north west][inner sep=0.75pt]    {$\ldots $};
\draw (108,166.4) node [anchor=north west][inner sep=0.75pt]    {$i^{1}_{1}$};
\draw (191,165.4) node [anchor=north west][inner sep=0.75pt]    {$i^{1}_{2}$};
\draw (337,166.4) node [anchor=north west][inner sep=0.75pt]    {$i^{L_{A}}_{D}$};
\draw (422,163.4) node [anchor=north west][inner sep=0.75pt]    {$\overline{x}_{1}$};
\draw (563,163.4) node [anchor=north west][inner sep=0.75pt]    {$\overline{x}_{D}$};

\end{tikzpicture}
    \end{equation}
    Set $L:=L_B-L_A$ and, for each $\nu=1,\ldots,D$, we tensorize the variables
    $\bar x_\nu$ as
    \begin{align*}
        \bar x_\nu=t_{b,L}^1(j_\nu^1,\ldots,j_\nu^{L},\bar y_\nu).
    \end{align*}
    Then, we can expand each $\Phi^A_\nu$ as a Tensor Train as follows
    \begin{equation}\label{eq:Phinu}
        \tikzset{every picture/.style={line width=0.75pt}} 
\begin{tikzpicture}[x=0.75pt,y=0.75pt,yscale=-1,xscale=1]

\draw   (275.97,90.34) -- (327.13,90.34) -- (327.13,134.83) -- (275.97,134.83) -- cycle ;
\draw   (360.75,89.22) -- (411.91,89.22) -- (411.91,133.72) -- (360.75,133.72) -- cycle ;
\draw    (327.13,111.47) -- (361.23,111.47) ;
\draw    (412.64,111.47) -- (446.74,111.47) ;
\draw   (505.51,89) -- (556.67,89) -- (556.67,133.5) -- (505.51,133.5) -- cycle ;
\draw    (471.64,111.47) -- (505.75,111.47) ;
\draw    (300.13,134.47) -- (300.13,160.83) ;
\draw    (385.13,133.47) -- (385.13,159.83) ;
\draw    (531.13,133.47) -- (531.13,159.83) ;
\draw    (241.13,111.47) -- (275.23,111.47) ;
\draw    (300.13,63.47) -- (300.13,89.83) ;

\draw (95.02,101.76) node [anchor=north west][inner sep=0.75pt]    {$\Phi _{\nu } =$};
\draw (287.81,100.93) node [anchor=north west][inner sep=0.75pt]    {$W^{1}_{\nu}$};
\draw (372.6,100.82) node [anchor=north west][inner sep=0.75pt]    {$W^{2}_{\nu}$};
\draw (515.36,100.59) node [anchor=north west][inner sep=0.75pt]    {$W^{L+1}_{\nu}$};
\draw (450,115) node [anchor=north west][inner sep=0.75pt]    {$\ldots $};
\draw (291,166.4) node [anchor=north west][inner sep=0.75pt]    {$j^{1}_{\nu }$};
\draw (378,165.4) node [anchor=north west][inner sep=0.75pt]    {$j^{1}_{\nu }$};
\draw (524,165.4) node [anchor=north west][inner sep=0.75pt]    {$\overline{y}_{\nu }$};
\draw (170,99.4) node [anchor=north west][inner sep=0.75pt]    {$k_{L_{A} D+\nu -1}$};
\draw (277,30.4) node [anchor=north west][inner sep=0.75pt]    {$k_{L_{A} D+\nu }$};

\end{tikzpicture}
    \end{equation}
    For any $\nu=1,\ldots,D$, we have
    $\Phi^A_\nu\in\R^{r_{L_AD+\nu-1}}\otimes S\otimes \R^{r_{L_AD+\nu}}$
    and thus all ranks corresponding
    to connected edges in \cref{eq:Phinu}
    are bounded by $\dim S$ (see \cite[Lemma 2.7]{ali2023approximation}).
    
    Next, inserting \cref{eq:Phinu} into \cref{eq:varphiA}, we obtain
    for $\bs \varphi=T^D_{b,L_B}(\varphi)$
    \begin{equation}\label{eq:phiB}
        \tikzset{every picture/.style={line width=0.75pt}} 
\begin{tikzpicture}[x=0.75pt,y=0.75pt,yscale=-1,xscale=1]

\draw   (84.97,16.34) -- (136.13,16.34) -- (136.13,60.83) -- (84.97,60.83) -- cycle ;

\draw   (169.75,15.22) -- (220.91,15.22) -- (220.91,59.72) -- (169.75,59.72) -- cycle ;
\draw    (136.13,37.47) -- (170.23,37.47) ;
\draw    (221.64,37.47) -- (255.74,37.47) ;
\draw    (278.64,37.47) -- (312.75,37.47) ;
\draw   (313,15.22) -- (364.16,15.22) -- (364.16,59.72) -- (313,59.72) -- cycle ;
\draw    (364.16,36.41) -- (397.82,36.41) ;
\draw    (439.64,36.64) -- (473.31,36.64) ;
\draw    (111.13,60.47) -- (111.13,86.83) ;
\draw    (194.13,59.47) -- (194.13,85.83) ;
\draw    (340.13,60.47) -- (340.13,86.83) ;
\draw   (439.12,13.32) -- (439.18,63.89) -- (398.56,63.94) -- (398.5,13.37) -- cycle ;
\draw   (440.23,97.12) -- (440.29,147.69) -- (399.67,147.74) -- (399.61,97.17) -- cycle ;
\draw    (419.89,63.91) -- (419.93,97.63) ;
\draw    (419.98,148.44) -- (420.02,182.15) ;
\draw   (454.47,240.19) -- (454.53,290.77) -- (400.04,290.83) -- (399.98,240.26) -- cycle ;
\draw    (420.05,206.76) -- (420.09,240.48) ;
\draw    (399.87,121.27) -- (375.81,121.3) ;
\draw    (400.03,265.59) -- (375.97,265.62) ;
\draw    (398.56,63.94) -- (373.84,79) ;
\draw    (501.35,36.58) -- (535.01,36.58) ;
\draw   (576.32,13.49) -- (576.38,64.06) -- (535.76,64.11) -- (535.7,13.53) -- cycle ;
\draw   (577.43,97.29) -- (577.49,147.86) -- (536.87,147.91) -- (536.81,97.34) -- cycle ;
\draw    (557.08,64.08) -- (557.12,97.79) ;
\draw    (557.18,148.61) -- (557.21,182.32) ;
\draw   (591.47,240.36) -- (591.53,290.93) -- (537.23,291) -- (537.17,240.43) -- cycle ;
\draw    (557.24,206.93) -- (557.28,240.65) ;
\draw    (537.06,121.44) -- (513,121.47) ;
\draw    (537.23,265.75) -- (513.17,265.79) ;
\draw    (535.76,64.11) -- (511.03,79.17) ;

\draw (22.02,118.76) node [anchor=north west][inner sep=0.75pt]    {$\boldsymbol{\varphi } =$};
\draw (96.81,27.93) node [anchor=north west][inner sep=0.75pt]    {$U^{A}_{1}$};
\draw (181.6,26.82) node [anchor=north west][inner sep=0.75pt]    {$U^{A}_{2}$};
\draw (257.09,35.99) node [anchor=north west][inner sep=0.75pt]    {$\ldots $};
\draw (319.84,26.82) node [anchor=north west][inner sep=0.75pt]    {$U^{A}_{L_{A} D}$};
\draw (474.5,34.15) node [anchor=north west][inner sep=0.75pt]    {$\dotsc $};
\draw (426.6,182.34) node [anchor=north west][inner sep=0.75pt]  [rotate=-89.93]  {$\dotsc $};
\draw (407.73,252.72) node [anchor=north west][inner sep=0.75pt]    {$W^{L+1}_{1}$};
\draw (407.95,111.11) node [anchor=north west][inner sep=0.75pt]    {$W^{2}_{1}$};
\draw (407.76,27.31) node [anchor=north west][inner sep=0.75pt]    {$W^{1}_{1}$};
\draw (563.8,182.51) node [anchor=north west][inner sep=0.75pt]  [rotate=-89.93]  {$\dotsc $};
\draw (544.92,252.89) node [anchor=north west][inner sep=0.75pt]    {$W^{L+1}_{D}$};
\draw (545.13,111.28) node [anchor=north west][inner sep=0.75pt]    {$W^{2}_{D}$};
\draw (544.93,27.48) node [anchor=north west][inner sep=0.75pt]    {$W^{1}_{D}$};

\end{tikzpicture}
    \end{equation}
    Note that the representation from \cref{eq:phiB} is not the
    same as \cref{eq:rep}. To obtain the latter we have to
    rearrange the part of the tree in \cref{eq:rep} after $U^A_{L_AD}$
    such that cores corresponding to the same tensorization level
    are grouped together.
    This can be achieved by inserting tensor products with
    identities ``in-between''. We first illustrate this with a simple example.
    
    Suppose we have the following three tensors
    \begin{equation*}
        \tikzset{every picture/.style={line width=0.75pt}} 
\begin{tikzpicture}[x=0.75pt,y=0.75pt,yscale=-1,xscale=1]

\draw   (88.97,90.34) -- (140.13,90.34) -- (140.13,134.83) -- (88.97,134.83) -- cycle ;
\draw    (140.13,111.47) -- (174.23,111.47) ;
\draw    (115.13,134.47) -- (115.13,160.83) ;
\draw   (287.97,90.34) -- (339.13,90.34) -- (339.13,134.83) -- (287.97,134.83) -- cycle ;
\draw    (253.13,110.47) -- (287.23,110.47) ;
\draw   (423.97,87.34) -- (475.13,87.34) -- (475.13,131.83) -- (423.97,131.83) -- cycle ;
\draw    (450.13,131.47) -- (450.13,157.83) ;

\draw (106.81,103.93) node [anchor=north west][inner sep=0.75pt]    {$A$};
\draw (110,163.4) node [anchor=north west][inner sep=0.75pt]    {$1$};
\draw (176,102.4) node [anchor=north west][inner sep=0.75pt]    {$2$};
\draw (305.81,103.93) node [anchor=north west][inner sep=0.75pt]    {$B$};
\draw (243,101.4) node [anchor=north west][inner sep=0.75pt]    {$2$};
\draw (441.81,100.93) node [anchor=north west][inner sep=0.75pt]    {$C$};
\draw (445,159.4) node [anchor=north west][inner sep=0.75pt]    {$1$};

\end{tikzpicture}
    \end{equation*}
    and we want to contract edges numbered 1 and 2 with
    each other such that we obtain
    \begin{equation}\label{eq:contract}
        \tikzset{every picture/.style={line width=0.75pt}} 
\begin{tikzpicture}[x=0.75pt,y=0.75pt,yscale=-1,xscale=1]

\draw   (88.97,90.34) -- (140.13,90.34) -- (140.13,134.83) -- (88.97,134.83) -- cycle ;
\draw    (140.13,111.47) -- (174.23,111.47) ;
\draw    (115.13,134.47) -- (115.13,160.83) ;
\draw   (221.97,90.34) -- (273.13,90.34) -- (273.13,134.83) -- (221.97,134.83) -- cycle ;
\draw    (187.13,110.47) -- (221.23,110.47) ;
\draw   (89.97,207.34) -- (141.13,207.34) -- (141.13,251.83) -- (89.97,251.83) -- cycle ;
\draw    (115.13,180.47) -- (115.13,206.83) ;

\draw (106.81,103.93) node [anchor=north west][inner sep=0.75pt]    {$A$};
\draw (110,163.4) node [anchor=north west][inner sep=0.75pt]    {$1$};
\draw (176,102.4) node [anchor=north west][inner sep=0.75pt]    {$2$};
\draw (239.81,103.93) node [anchor=north west][inner sep=0.75pt]    {$B$};
\draw (107.81,220.93) node [anchor=north west][inner sep=0.75pt]    {$C$};

\end{tikzpicture}
    \end{equation}
    Now, suppose we want to obtain the same result but
    we are restricted to contracting all edges of $A$ with all edges of
    $B$ first, and then all edges of the resulting tensor with all edges
    of $C$. This can be accomplished by the following modification
    of $B$
    \begin{equation*}
        \tikzset{every picture/.style={line width=0.75pt}} 
\begin{tikzpicture}[x=0.75pt,y=0.75pt,yscale=-1,xscale=1]

\draw   (306.97,110.34) -- (358.13,110.34) -- (358.13,154.83) -- (306.97,154.83) -- cycle ;
\draw   (150.97,106.34) -- (202.13,106.34) -- (202.13,150.83) -- (150.97,150.83) -- cycle ;
\draw    (178.63,73.02) -- (178.69,107.13) ;
\draw    (202.13,129.47) -- (236.23,129.47) ;
\draw    (177.13,183.83) -- (177.13,150.83) ;
\draw   (379.97,110.34) -- (431.13,110.34) -- (431.13,154.83) -- (379.97,154.83) -- cycle ;
\draw    (332.13,187.83) -- (332.13,154.83) ;
\draw    (432.13,133.47) -- (466.23,133.47) ;
\draw    (405.13,110.83) -- (405.13,77.83) ;
\draw  [dash pattern={on 4.5pt off 4.5pt}] (302,105) -- (436.67,105) -- (436.67,160.83) -- (302,160.83) -- cycle ;

\draw (324.81,123.93) node [anchor=north west][inner sep=0.75pt]    {$B$};
\draw (168.81,119.93) node [anchor=north west][inner sep=0.75pt]    {$\overline{B}$};
\draw (172,186.4) node [anchor=north west][inner sep=0.75pt]    {$2$};
\draw (172.09,55.29) node [anchor=north west][inner sep=0.75pt]    {$1$};
\draw (240,120.4) node [anchor=north west][inner sep=0.75pt]    {$1$};
\draw (257,123.4) node [anchor=north west][inner sep=0.75pt]    {$:=$};
\draw (361.81,123.93) node [anchor=north west][inner sep=0.75pt]    {$\otimes $};
\draw (397.81,123.93) node [anchor=north west][inner sep=0.75pt]    {$\mathbb{I}$};
\draw (327,191.4) node [anchor=north west][inner sep=0.75pt]    {$2$};
\draw (470,124.4) node [anchor=north west][inner sep=0.75pt]    {$1$};
\draw (400.09,59.29) node [anchor=north west][inner sep=0.75pt]    {$1$};

\end{tikzpicture}
    \end{equation*}
    where $\id$ is the identity mapping on the space
    corresponding to index 1.
    With this modification we can contract $A$ with $\bar B$
    \begin{equation*}
        \tikzset{every picture/.style={line width=0.75pt}} 
\begin{tikzpicture}[x=0.75pt,y=0.75pt,yscale=-1,xscale=1]

\draw   (230.97,105.34) -- (282.13,105.34) -- (282.13,149.83) -- (230.97,149.83) -- cycle ;
\draw   (303.97,105.34) -- (355.13,105.34) -- (355.13,149.83) -- (303.97,149.83) -- cycle ;
\draw    (256.13,182.83) -- (256.13,149.83) ;
\draw    (356.13,128.47) -- (390.23,128.47) ;
\draw    (329.13,105.83) -- (329.13,72.83) ;
\draw  [dash pattern={on 4.5pt off 4.5pt}] (226,100) -- (360.67,100) -- (360.67,155.83) -- (226,155.83) -- cycle ;
\draw   (369.97,170.34) -- (421.13,170.34) -- (421.13,214.83) -- (369.97,214.83) -- cycle ;
\draw    (396.13,170.83) -- (396.13,137.83) ;
\draw    (265.5,192.47) -- (368.23,192.47) ;

\draw (248.81,118.93) node [anchor=north west][inner sep=0.75pt]    {$B$};
\draw (285.81,118.93) node [anchor=north west][inner sep=0.75pt]    {$\otimes $};
\draw (321.81,118.93) node [anchor=north west][inner sep=0.75pt]    {$\mathbb{I}$};
\draw (252,182.4) node [anchor=north west][inner sep=0.75pt]    {$2$};
\draw (391,119.4) node [anchor=north west][inner sep=0.75pt]    {$1$};
\draw (324.09,54.29) node [anchor=north west][inner sep=0.75pt]    {$1$};
\draw (387.81,183.93) node [anchor=north west][inner sep=0.75pt]    {$A$};

\end{tikzpicture}
    \end{equation*}
    and then the result with $C$
    \begin{equation*}
        \tikzset{every picture/.style={line width=0.75pt}} 
\begin{tikzpicture}[x=0.75pt,y=0.75pt,yscale=-1,xscale=1]

\draw   (203.97,149.34) -- (255.13,149.34) -- (255.13,193.83) -- (203.97,193.83) -- cycle ;
\draw   (276.97,149.34) -- (328.13,149.34) -- (328.13,193.83) -- (276.97,193.83) -- cycle ;
\draw    (229.13,226.83) -- (229.13,193.83) ;
\draw    (329.13,172.47) -- (363.23,172.47) ;
\draw    (302.13,149.83) -- (302.13,116.83) ;
\draw  [dash pattern={on 4.5pt off 4.5pt}] (199,144) -- (333.67,144) -- (333.67,199.83) -- (199,199.83) -- cycle ;
\draw   (342.97,214.34) -- (394.13,214.34) -- (394.13,258.83) -- (342.97,258.83) -- cycle ;
\draw    (369.13,214.83) -- (369.13,181.83) ;
\draw    (238.5,236.47) -- (341.23,236.47) ;
\draw   (276.97,19.34) -- (328.13,19.34) -- (328.13,63.83) -- (276.97,63.83) -- cycle ;
\draw    (302.13,96.83) -- (302.13,63.83) ;

\draw (221.81,162.93) node [anchor=north west][inner sep=0.75pt]    {$B$};
\draw (258.81,162.93) node [anchor=north west][inner sep=0.75pt]    {$\otimes $};
\draw (294.81,162.93) node [anchor=north west][inner sep=0.75pt]    {$\mathbb{I}$};
\draw (225,226.4) node [anchor=north west][inner sep=0.75pt]    {$2$};
\draw (364,163.4) node [anchor=north west][inner sep=0.75pt]    {$1$};
\draw (297.09,98.29) node [anchor=north west][inner sep=0.75pt]    {$1$};
\draw (360.81,227.93) node [anchor=north west][inner sep=0.75pt]    {$A$};
\draw (294.81,32.93) node [anchor=north west][inner sep=0.75pt]    {$C$};

\end{tikzpicture}
    \end{equation*}
    such that overall we obtain the same result as in \cref{eq:contract}.
    Note that the number of non-zero entries
    of the modified tensor $\bar B$ is equal to the number of non-zero
    entries of $B$ times the dimension of the space
    corresponding to index 1.
    
    Now we apply this to our original problem in
    \cref{eq:phiB} as follows. The first $L_AD$ cores remain unchanged,
    i.e., $U_\nu^B:=U_\nu^A$ for $\nu=1,\ldots, L_AD$.
    Next, to keep track of the edges from
    \cref{eq:phiB}, we use the labels
    $\mathrm{d}(\nu\leftrightarrow\nu+1)$ to indicate edges
    connecting cores of different spatial dimensions and
    $\mathrm{lvl}_\nu(j\leftrightarrow j+1)$ to indicate edges connecting
    cores of different levels within the spatial variable $\nu$.
    That is, we use the following labeling
    \begin{equation*}
        \tikzset{every picture/.style={line width=0.75pt}} 
\begin{tikzpicture}[x=0.75pt,y=0.75pt,yscale=-1,xscale=1]

\draw    (62.64,37.47) -- (96.75,37.47) ;
\draw   (97,15.22) -- (148.16,15.22) -- (148.16,59.72) -- (97,59.72) -- cycle ;
\draw    (148.16,36.41) -- (231.67,36.41) ;
\draw    (272.64,36.64) -- (335.67,36.64) ;
\draw    (124.13,60.47) -- (124.13,86.83) ;
\draw   (272.12,13.32) -- (272.18,63.89) -- (231.56,63.94) -- (231.5,13.37) -- cycle ;
\draw   (271.23,127.12) -- (271.29,177.69) -- (230.67,177.74) -- (230.61,127.17) -- cycle ;
\draw    (252.89,63.91) -- (252.89,126.5) ;
\draw    (250.98,178.44) -- (250.98,221.5) ;
\draw   (285.47,294.19) -- (285.53,344.77) -- (231.04,344.83) -- (230.98,294.26) -- cycle ;
\draw    (251.09,243.5) -- (251.09,294.48) ;
\draw    (230.87,151.27) -- (206.81,151.3) ;
\draw    (231.03,319.59) -- (206.97,319.62) ;
\draw    (231.56,63.94) -- (206.84,79) ;
\draw    (422.67,35.58) -- (520.01,35.58) ;
\draw   (561.32,12.49) -- (561.38,63.06) -- (520.76,63.11) -- (520.7,12.53) -- cycle ;
\draw    (542.08,63.08) -- (542.08,125.5) ;
\draw    (520.76,63.11) -- (496.03,78.17) ;
\draw   (561.23,126.29) -- (561.29,176.86) -- (520.67,176.91) -- (520.61,126.34) -- cycle ;
\draw    (540.98,177.61) -- (540.98,220.67) ;
\draw   (575.47,293.36) -- (575.53,343.93) -- (521.04,344) -- (520.98,293.43) -- cycle ;
\draw    (541.09,242.67) -- (541.09,293.65) ;
\draw    (520.87,150.44) -- (496.81,150.47) ;
\draw    (521.03,318.75) -- (496.97,318.79) ;

\draw (38.09,30.99) node [anchor=north west][inner sep=0.75pt]    {$\ldots $};
\draw (103.84,26.82) node [anchor=north west][inner sep=0.75pt]    {$U^{A}_{L_{A} D}$};
\draw (365.5,36.15) node [anchor=north west][inner sep=0.75pt]    {$\ldots $};
\draw (254.6,220.34) node [anchor=north west][inner sep=0.75pt]  [rotate=-89.93]  {$\ldots $};
\draw (238.73,306.72) node [anchor=north west][inner sep=0.75pt]    {$W^{L+1}_{1}$};
\draw (238.95,141.11) node [anchor=north west][inner sep=0.75pt]    {$W^{2}_{1}$};
\draw (240.76,27.31) node [anchor=north west][inner sep=0.75pt]    {$W^{1}_{1}$};
\draw (529.93,26.48) node [anchor=north west][inner sep=0.75pt]    {$W^{1}_{D}$};
\draw (160,20.4) node [anchor=north west][inner sep=0.75pt]  [font=\footnotesize]  {$\mathrm{d}( 0\leftrightarrow 1)$};
\draw (544.6,219.51) node [anchor=north west][inner sep=0.75pt]  [rotate=-89.93]  {$\ldots $};
\draw (528.73,305.89) node [anchor=north west][inner sep=0.75pt]    {$W^{L+1}_{D}$};
\draw (528.95,140.28) node [anchor=north west][inner sep=0.75pt]    {$W^{2}_{D}$};
\draw (282,19.4) node [anchor=north west][inner sep=0.75pt]  [font=\footnotesize]  {$\mathrm{d}( 1\leftrightarrow 2)$};
\draw (418,18.4) node [anchor=north west][inner sep=0.75pt]  [font=\footnotesize]  {$\mathrm{d}( D-1\leftrightarrow D)$};
\draw (258,87.4) node [anchor=north west][inner sep=0.75pt]  [font=\footnotesize]  {$\mathrm{lvl}_{1}( 1\leftrightarrow 2)$};
\draw (257,195.4) node [anchor=north west][inner sep=0.75pt]  [font=\footnotesize]  {$\mathrm{lvl}_{1}( 2\leftrightarrow 3)$};
\draw (258,266.4) node [anchor=north west][inner sep=0.75pt]  [font=\footnotesize]  {$\mathrm{lvl}_{1}( L\leftrightarrow L+1)$};
\draw (550,87.4) node [anchor=north west][inner sep=0.75pt]  [font=\footnotesize]  {$\mathrm{lvl}_{D}( 1\leftrightarrow 2)$};
\draw (550,190.4) node [anchor=north west][inner sep=0.75pt]  [font=\footnotesize]  {$\mathrm{lvl}_{D}( 2\leftrightarrow 3)$};
\draw (548,263.4) node [anchor=north west][inner sep=0.75pt]  [font=\footnotesize]  {$\mathrm{lvl}_{D}( L\leftrightarrow L+1)$};

\end{tikzpicture}
    \end{equation*}
    where we do not label the free edges, since we will not
    perform any modification w.r.t.\ to the corresponding indices.
    Then, the first new $D$ cores are defined as
    
    \begin{equation*}
        \tikzset{every picture/.style={line width=0.75pt}} 
\begin{tikzpicture}[x=0.75pt,y=0.75pt,yscale=-1,xscale=1]

\draw   (310,37) -- (373.67,37) -- (373.67,83.83) -- (310,83.83) -- cycle ;
\draw    (266.67,60) -- (310.67,60) ;
\draw    (374,46) -- (414.67,46) ;
\draw    (374,70) -- (413.67,70) ;
\draw    (343,8) -- (343,36.83) ;
\draw   (311,158) -- (374.67,158) -- (374.67,204.83) -- (311,204.83) -- cycle ;
\draw    (344,129) -- (344,157.83) ;
\draw    (266.67,183) -- (310.67,183) ;
\draw   (316,226) -- (370.67,226) -- (370.67,263.83) -- (316,263.83) -- cycle ;
\draw    (375,169) -- (415.67,169) ;
\draw    (375,193) -- (414.67,193) ;
\draw    (270.67,244) -- (314.67,244) ;
\draw    (371,243) -- (410.67,243) ;
\draw  [dash pattern={on 4.5pt off 4.5pt}] (305,153) -- (384.67,153) -- (384.67,272.83) -- (305,272.83) -- cycle ;
\draw   (311,329) -- (374.67,329) -- (374.67,375.83) -- (311,375.83) -- cycle ;
\draw    (344,300) -- (344,328.83) ;
\draw    (266.67,354) -- (310.67,354) ;
\draw   (316,397) -- (370.67,397) -- (370.67,434.83) -- (316,434.83) -- cycle ;
\draw    (375,340) -- (415.67,340) ;
\draw    (375,364) -- (414.67,364) ;
\draw    (270.67,415) -- (314.67,415) ;
\draw    (371,414) -- (410.67,414) ;
\draw   (317,461) -- (371.67,461) -- (371.67,498.83) -- (317,498.83) -- cycle ;
\draw    (271.67,479) -- (315.67,479) ;
\draw    (372,478) -- (411.67,478) ;
\draw  [dash pattern={on 4.5pt off 4.5pt}] (304,322) -- (389.67,322) -- (389.67,513.83) -- (304,513.83) -- cycle ;

\draw (66,51.4) node [anchor=north west][inner sep=0.75pt]    {$U^{B}_{L_{A} D+1} :=$};
\draw (329,49.4) node [anchor=north west][inner sep=0.75pt]    {$W^{1}_{1}$};
\draw (422,35.4) node [anchor=north west][inner sep=0.75pt]    {$\mathrm{d}( 1\leftrightarrow 2)$};
\draw (421,61.4) node [anchor=north west][inner sep=0.75pt]    {$\mathrm{lvl}_{1}( 1\leftrightarrow 2)$};
\draw (199,50.4) node [anchor=north west][inner sep=0.75pt]    {$\mathrm{d}( 0\leftrightarrow 1)$};
\draw (67,198.4) node [anchor=north west][inner sep=0.75pt]    {$U^{B}_{L_{A} D+2} :=$};
\draw (330,170.4) node [anchor=north west][inner sep=0.75pt]    {$W^{1}_{2}$};
\draw (199,173.4) node [anchor=north west][inner sep=0.75pt]    {$\mathrm{d}( 1\leftrightarrow 2)$};
\draw (335,208.4) node [anchor=north west][inner sep=0.75pt]    {$\otimes $};
\draw (339,235.4) node [anchor=north west][inner sep=0.75pt]    {$\mathbb{I}$};
\draw (423,158.4) node [anchor=north west][inner sep=0.75pt]    {$\mathrm{d}( 2\leftrightarrow 3)$};
\draw (422,184.4) node [anchor=north west][inner sep=0.75pt]    {$\mathrm{lvl}_{2}( 1\leftrightarrow 2)$};
\draw (187,234.4) node [anchor=north west][inner sep=0.75pt]    {$\mathrm{lvl}_{1}( 1\leftrightarrow 2)$};
\draw (418,234.4) node [anchor=north west][inner sep=0.75pt]    {$\mathrm{lvl}_{1}( 1\leftrightarrow 2)$};
\draw (67,369.4) node [anchor=north west][inner sep=0.75pt]    {$U^{B}_{L_{A} D+3} :=$};
\draw (330,341.4) node [anchor=north west][inner sep=0.75pt]    {$W^{1}_{3}$};
\draw (199,344.4) node [anchor=north west][inner sep=0.75pt]    {$\mathrm{d}( 2\leftrightarrow 3)$};
\draw (335,379.4) node [anchor=north west][inner sep=0.75pt]    {$\otimes $};
\draw (339,406.4) node [anchor=north west][inner sep=0.75pt]    {$\mathbb{I}$};
\draw (423,329.4) node [anchor=north west][inner sep=0.75pt]    {$\mathrm{d}( 3\leftrightarrow 4)$};
\draw (422,355.4) node [anchor=north west][inner sep=0.75pt]    {$\mathrm{lvl}_{3}( 1\leftrightarrow 2)$};
\draw (187,405.4) node [anchor=north west][inner sep=0.75pt]    {$\mathrm{lvl}_{1}( 1\leftrightarrow 2)$};
\draw (418,405.4) node [anchor=north west][inner sep=0.75pt]    {$\mathrm{lvl}_{1}( 1\leftrightarrow 2)$};
\draw (336,440.4) node [anchor=north west][inner sep=0.75pt]    {$\otimes $};
\draw (340,470.4) node [anchor=north west][inner sep=0.75pt]    {$\mathbb{I}$};
\draw (188,469.4) node [anchor=north west][inner sep=0.75pt]    {$\mathrm{lvl}_{2}( 1\leftrightarrow 2)$};
\draw (419,469.4) node [anchor=north west][inner sep=0.75pt]    {$\mathrm{lvl}_{2}( 1\leftrightarrow 2)$};
\draw (339.1,546.33) node [anchor=north west][inner sep=0.75pt]  [rotate=-90]  {$\dotsc $};

\end{tikzpicture}
    \end{equation*}
    
    where the input variable corresponds to the free unlabeled edge.

    The second level of the new $D$ cores is defined as
    
    \begin{equation*}
        \tikzset{every picture/.style={line width=0.75pt}} 
\begin{tikzpicture}[x=0.75pt,y=0.75pt,yscale=-1,xscale=1]

\draw   (322,42) -- (385.67,42) -- (385.67,88.83) -- (322,88.83) -- cycle ;
\draw    (355,13) -- (355,41.83) ;
\draw    (277.67,67) -- (321.67,67) ;
\draw   (327,110) -- (381.67,110) -- (381.67,147.83) -- (327,147.83) -- cycle ;
\draw    (386,65) -- (426.67,65) ;
\draw    (281.67,128) -- (325.67,128) ;
\draw    (382,127) -- (421.67,127) ;
\draw   (328,210) -- (382.67,210) -- (382.67,247.83) -- (328,247.83) -- cycle ;
\draw    (282.67,228) -- (326.67,228) ;
\draw    (383,227) -- (422.67,227) ;
\draw  [dash pattern={on 4.5pt off 4.5pt}] (315,35) -- (399.67,35) -- (399.67,259) -- (315,259) -- cycle ;

\draw   (324,322) -- (387.67,322) -- (387.67,368.83) -- (324,368.83) -- cycle ;
\draw    (357,293) -- (357,321.83) ;
\draw    (279.67,347) -- (323.67,347) ;
\draw   (329,390) -- (383.67,390) -- (383.67,427.83) -- (329,427.83) -- cycle ;
\draw    (388,345) -- (428.67,345) ;
\draw    (283.67,408) -- (327.67,408) ;
\draw    (384,407) -- (423.67,407) ;
\draw   (330,490) -- (384.67,490) -- (384.67,527.83) -- (330,527.83) -- cycle ;
\draw    (284.67,508) -- (328.67,508) ;
\draw    (385,507) -- (424.67,507) ;
\draw  [dash pattern={on 4.5pt off 4.5pt}] (317,315) -- (401.67,315) -- (401.67,539) -- (317,539) -- cycle ;

\draw (48,132.4) node [anchor=north west][inner sep=0.75pt]    {$U^{B}_{( L_{A} +1) D+1} :=$};
\draw (341,54.4) node [anchor=north west][inner sep=0.75pt]    {$W^{2}_{1}$};
\draw (343,89.4) node [anchor=north west][inner sep=0.75pt]    {$\otimes $};
\draw (350,119.4) node [anchor=north west][inner sep=0.75pt]    {$\mathbb{I}$};
\draw (182,57.4) node [anchor=north west][inner sep=0.75pt]    {$\mathrm{lvl}_{1}( 1\leftrightarrow 2)$};
\draw (351,219.4) node [anchor=north west][inner sep=0.75pt]    {$\mathbb{I}$};
\draw (434,55.4) node [anchor=north west][inner sep=0.75pt]    {$\mathrm{lvl}_{1}( 2\leftrightarrow 3)$};
\draw (193,117.4) node [anchor=north west][inner sep=0.75pt]    {$\mathrm{lvl}_{2}( 1\leftrightarrow 2)$};
\draw (433,116.4) node [anchor=north west][inner sep=0.75pt]    {$\mathrm{lvl}_{2}( 1\leftrightarrow 2)$};
\draw (179,218.4) node [anchor=north west][inner sep=0.75pt]    {$\mathrm{lvl}_{D}( 1\leftrightarrow 2)$};
\draw (345,151.4) node [anchor=north west][inner sep=0.75pt]    {$\otimes $};
\draw (355.1,166.33) node [anchor=north west][inner sep=0.75pt]  [rotate=-90]  {$\dotsc $};
\draw (344,189.4) node [anchor=north west][inner sep=0.75pt]    {$\otimes $};
\draw (433,217.4) node [anchor=north west][inner sep=0.75pt]    {$\mathrm{lvl}_{D}( 1\leftrightarrow 2)$};
\draw (50,412.4) node [anchor=north west][inner sep=0.75pt]    {$U^{B}_{( L_{A} +1) D+2} :=$};
\draw (343,334.4) node [anchor=north west][inner sep=0.75pt]    {$W^{2}_{2}$};
\draw (345,369.4) node [anchor=north west][inner sep=0.75pt]    {$\otimes $};
\draw (352,399.4) node [anchor=north west][inner sep=0.75pt]    {$\mathbb{I}$};
\draw (184,337.4) node [anchor=north west][inner sep=0.75pt]    {$\mathrm{lvl}_{2}( 1\leftrightarrow 2)$};
\draw (353,499.4) node [anchor=north west][inner sep=0.75pt]    {$\mathbb{I}$};
\draw (436,335.4) node [anchor=north west][inner sep=0.75pt]    {$\mathrm{lvl}_{2}( 2\leftrightarrow 3)$};
\draw (195,397.4) node [anchor=north west][inner sep=0.75pt]    {$\mathrm{lvl}_{1}( 2\leftrightarrow 3)$};
\draw (435,396.4) node [anchor=north west][inner sep=0.75pt]    {$\mathrm{lvl}_{2}( 2\leftrightarrow 3)$};
\draw (181,498.4) node [anchor=north west][inner sep=0.75pt]    {$\mathrm{lvl}_{D}( 1\leftrightarrow 2)$};
\draw (435,497.4) node [anchor=north west][inner sep=0.75pt]    {$\mathrm{lvl}_{D}( 1\leftrightarrow 2)$};
\draw (346,469.4) node [anchor=north west][inner sep=0.75pt]    {$\otimes $};
\draw (357.1,446.33) node [anchor=north west][inner sep=0.75pt]  [rotate=-90]  {$\dotsc $};
\draw (347,431.4) node [anchor=north west][inner sep=0.75pt]    {$\otimes $};
\draw (340.1,575.33) node [anchor=north west][inner sep=0.75pt]  [rotate=-90]  {$\dotsc $};

\end{tikzpicture}
    \end{equation*}

    Finally, the last $D$ cores are defined as
    
    \begin{equation*}
        \tikzset{every picture/.style={line width=0.75pt}} 
\begin{tikzpicture}[x=0.75pt,y=0.75pt,yscale=-1,xscale=1]

\draw   (322,42) -- (385.67,42) -- (385.67,88.83) -- (322,88.83) -- cycle ;
\draw    (355,13) -- (355,41.83) ;
\draw    (277.67,67) -- (321.67,67) ;
\draw   (327,110) -- (381.67,110) -- (381.67,147.83) -- (327,147.83) -- cycle ;
\draw    (281.67,128) -- (325.67,128) ;
\draw    (382,127) -- (421.67,127) ;
\draw   (328,210) -- (382.67,210) -- (382.67,247.83) -- (328,247.83) -- cycle ;
\draw    (282.67,228) -- (326.67,228) ;
\draw    (383,227) -- (422.67,227) ;
\draw  [dash pattern={on 4.5pt off 4.5pt}] (315,35) -- (399.67,35) -- (399.67,259) -- (315,259) -- cycle ;

\draw   (324,322) -- (387.67,322) -- (387.67,368.83) -- (324,368.83) -- cycle ;
\draw    (357,293) -- (357,321.83) ;
\draw    (279.67,347) -- (323.67,347) ;
\draw   (329,390) -- (383.67,390) -- (383.67,427.83) -- (329,427.83) -- cycle ;
\draw    (283.67,408) -- (327.67,408) ;
\draw    (384,407) -- (423.67,407) ;
\draw   (330,490) -- (384.67,490) -- (384.67,527.83) -- (330,527.83) -- cycle ;
\draw    (284.67,508) -- (328.67,508) ;
\draw    (385,507) -- (424.67,507) ;
\draw  [dash pattern={on 4.5pt off 4.5pt}] (317,315) -- (401.67,315) -- (401.67,539) -- (317,539) -- cycle ;
\draw   (316,669) -- (379.67,669) -- (379.67,715.83) -- (316,715.83) -- cycle ;
\draw    (349,640) -- (349,668.83) ;
\draw    (271.67,694) -- (315.67,694) ;

\draw (48,132.4) node [anchor=north west][inner sep=0.75pt]    {$U^{B}_{L_{B} D+1} :=$};
\draw (341,54.4) node [anchor=north west][inner sep=0.75pt]    {$W^{L+1}_{1}$};
\draw (343,89.4) node [anchor=north west][inner sep=0.75pt]    {$\otimes $};
\draw (350,119.4) node [anchor=north west][inner sep=0.75pt]    {$\mathbb{I}$};
\draw (164,57.4) node [anchor=north west][inner sep=0.75pt]    {$\mathrm{lvl}_{1}( L\leftrightarrow L+1)$};
\draw (351,219.4) node [anchor=north west][inner sep=0.75pt]    {$\mathbb{I}$};
\draw (168,117.4) node [anchor=north west][inner sep=0.75pt]    {$\mathrm{lvl}_{2}( L\leftrightarrow L+1)$};
\draw (161,218.4) node [anchor=north west][inner sep=0.75pt]    {$\mathrm{lvl}_{D}( L\leftrightarrow L+1)$};
\draw (345,151.4) node [anchor=north west][inner sep=0.75pt]    {$\otimes $};
\draw (354.1,166.33) node [anchor=north west][inner sep=0.75pt]  [rotate=-90]  {$\dotsc $};
\draw (344,189.4) node [anchor=north west][inner sep=0.75pt]    {$\otimes $};
\draw (50,412.4) node [anchor=north west][inner sep=0.75pt]    {$U^{B}_{L_{B} D+2} :=$};
\draw (343,334.4) node [anchor=north west][inner sep=0.75pt]    {$W^{L+1}_{2}$};
\draw (345,369.4) node [anchor=north west][inner sep=0.75pt]    {$\otimes $};
\draw (352,399.4) node [anchor=north west][inner sep=0.75pt]    {$\mathbb{I}$};
\draw (353,499.4) node [anchor=north west][inner sep=0.75pt]    {$\mathbb{I}$};
\draw (346,469.4) node [anchor=north west][inner sep=0.75pt]    {$\otimes $};
\draw (356.1,446.33) node [anchor=north west][inner sep=0.75pt]  [rotate=-90]  {$\dotsc $};
\draw (347,431.4) node [anchor=north west][inner sep=0.75pt]    {$\otimes $};
\draw (346.1,575.33) node [anchor=north west][inner sep=0.75pt]  [rotate=-90]  {$\dotsc $};
\draw (431,117.4) node [anchor=north west][inner sep=0.75pt]    {$\mathrm{lvl}_{2}( L\leftrightarrow L+1)$};
\draw (433,218.4) node [anchor=north west][inner sep=0.75pt]    {$\mathrm{lvl}_{D}( L\leftrightarrow L+1)$};
\draw (162,338.4) node [anchor=north west][inner sep=0.75pt]    {$\mathrm{lvl}_{2}( L\leftrightarrow L+1)$};
\draw (167,399.4) node [anchor=north west][inner sep=0.75pt]    {$\mathrm{lvl}_{3}( L\leftrightarrow L+1)$};
\draw (432,397.4) node [anchor=north west][inner sep=0.75pt]    {$\mathrm{lvl}_{3}( L\leftrightarrow L+1)$};
\draw (163,498.4) node [anchor=north west][inner sep=0.75pt]    {$\mathrm{lvl}_{D}( L\leftrightarrow L+1)$};
\draw (434,497.4) node [anchor=north west][inner sep=0.75pt]    {$\mathrm{lvl}_{D}( L\leftrightarrow L+1)$};
\draw (43,683.4) node [anchor=north west][inner sep=0.75pt]    {$U^{B}_{( L_{B} +1) D} :=$};
\draw (335,681.4) node [anchor=north west][inner sep=0.75pt]    {$W^{L+1}_{D}$};
\draw (152,685.4) node [anchor=north west][inner sep=0.75pt]    {$\mathrm{lvl}_{D}( L\leftrightarrow L+1)$};

\end{tikzpicture}
    \end{equation*}
    
    Counting the number of non-zero entries of $\bs U^B$,
    we see that the number of non-zero entries
    in the cores upto $L_AD$ is bounded by
    $\sparse(\bs U^A)$, while the remainder
    is bounded by the constant
    $2b(\dim S)^{2D}(L_B-L_A)D$.
\end{proofEnd}

\begin{lemma}[\ref{P4}]\label{lemma:p4}
    If $S$ is closed under $b$-adic dilation,
    then the  sets $\PhiF$ and $\PhiS$ satisfy \ref{P4}
    with $c=\mc O([\dim S]^{2D})$.
\end{lemma}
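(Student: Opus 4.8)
The plan is to verify \ref{P4} for $\PhiF$, $\PhiS$ and $\PhiN$ at once, with $c=\mc O([\dim S]^{2D})$, by the standard two-step recipe: given two tool elements, first lift the one of smaller resolution level to the (common) larger level, and then take the direct sum of the two tensor-train representations, which concatenates the cores block-diagonally and hence adds the TT-ranks. The one thing requiring care is that the lift must not cost more than a fixed factor in complexity.

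First I would fix $\varphi_1,\varphi_2\in\PhiF_n$ (the cases $\PhiS_n$ and $\PhiN_n$ run in parallel, only the lifting step differing), reduce to $\varphi_1,\varphi_2\neq0$ (otherwise $\varphi_1+\varphi_2\in\PhiF_n\subset\PhiF_{cn}$ by \ref{P1}--\ref{P2}, $c\ge1$), and pick representations $\bs U^i\in\mc P_{b,L_i,S,\bs r^i}$ with $\varphi_i=\mc R_{b,L_i,S,\bs r^i}(\bs U^i)$ attaining the bound $\le n$; set $L_B:=\max\{L_1,L_2\}$, say $L_1\le L_2=L_B$. The crucial structural observation is that $n$ already bounds $L_B$: a nonzero tensor in TT form on level $L$ has all ranks $\ge1$ and all $(L+1)D-1$ cores nonzero, so each of $\full$, $\sparse$, $\neuron$ of $\bs U^i$ is $\ge(L_i+1)D-1$; hence $(L_B+1)D\le n+1$ and, in particular, $(L_B-L_1)D\le L_BD\le n$.

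Next I would lift $\varphi_1$ to level $L_B$. For $\PhiF$ and $\PhiN$ this is \Cref{lemma:rankext}: keeping the first $L_1D$ cores of $\bs U^1$ and using that every rank created on the new modes is $\le(\dim S)^D$, one obtains $\bs U^{1,B}\in\mc P_{b,L_B,S,\bs r^{1,B}}$ still representing $\varphi_1$ with $\neuron(\bs U^{1,B})\le\neuron(\bs U^1)+\mc O((\dim S)^D(L_B-L_1+1)D)$ and $\full(\bs U^{1,B})\le\full(\bs U^1)+\mc O((\dim S)^{2D}(L_B-L_1+1)D)$; for $\PhiS$ the analogous bound on $\sparse(\bs U^{1,B})$ is exactly \Cref{lemma:sparsity}. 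Combining with $(L_B-L_1+1)D\le n+1$ and the starting bound $\le n$, the lifted representation has complexity at most $M:=\mc O([\dim S]^{2D})\,n$, while $\varphi_2$ is already a level-$L_B$ element of complexity $\le n\le M$.

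Finally I would bound $\varphi_1+\varphi_2$. On level $L_B$ it has the direct-sum TT-representation --- concatenate $U_1^{1,B}$ with $U_1^2$, take block-diagonal concatenations of $U_\nu^{1,B}$ and $U_\nu^2$ on the interior modes, stack $U_{(L_B+1)D}^{1,B}$ on $U_{(L_B+1)D}^2$ --- with ranks $r_\nu^{1,B}+r_\nu^2$. Since the off-diagonal blocks vanish, $\sparse$ and $\neuron$ of this representation equal $\sparse(\bs U^{1,B})+\sparse(\bs U^2)$ and $\neuron(\bs U^{1,B})+\neuron(\bs U^2)$, hence $\le2M$; for $\full$ one expands $\sum_\nu(r_{\nu-1}^{1,B}+r_{\nu-1}^2)(r_\nu^{1,B}+r_\nu^2)$, bounds the diagonal sums by $\full(\bs U^{1,B})+\full(\bs U^2)$, and controls each mixed product by $2ab\le a^2+b^2$ together with $\sum_\nu(r_\nu^i)^2\lesssim\full(\bs U^i)$ --- a consequence of the admissibility relations of \Cref{lemma:admissranks} ($r_\nu\le b\,r_{\nu-1}$ on the bit modes, $r_\nu\le(\dim S)\,r_{\nu-1}$ on the physical modes) --- so that $\full(\varphi_1+\varphi_2)\lesssim M$ as well. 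In all three cases $\varphi_1+\varphi_2$ has complexity $\le cn$ with $c=\mc O([\dim S]^{2D})$, which is \ref{P4}. I expect the main obstacle to be producing the lifted representation with controlled complexity: for $\sparse$ this needs the tensor-diagram surgery of \Cref{lemma:sparsity}, for $\full,\neuron$ it reduces to the rank estimates of \Cref{lemma:rankext}, and in every case the one non-bookkeeping insight is the inequality $(L_B-L_1)D\le n$, without which the level gap --- and thus the cost of lifting --- would be uncontrolled.
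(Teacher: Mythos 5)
Your proposal is correct and follows essentially the same route as the paper's proof: lift the lower-level representation to the common level via \Cref{lemma:rankext} (for $\full$, $\neuron$) and \Cref{lemma:sparsity} (for $\sparse$), bound the level gap by $n$ through the observation that any nonzero level-$L$ representation has complexity at least of order $LD$, and then take the block-diagonal sum, controlling the mixed rank products via the admissibility relations of \Cref{lemma:admissranks} (you use $2ab\le a^2+b^2$ where the paper uses Cauchy--Schwarz, which is equivalent). No gaps.
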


\begin{proof}
    (i) First, we show the statement for $\PhiF$. 
    Let $\varphi_A,\varphi_B\in\PhiF_n$ such that
    $\varphi_A\in\Phi_{b, L_A, S,\bs r^A}$,
    $\varphi_B\in\Phi_{b, L_B, S,\bs r^B}$ and w.l.o.g.\
    $L_A\leq L_B$. Then, there exists
    $\bs U\in\mc P_{b,L_B,S,\bs r}$ with
    $\varphi_A+\varphi_B=\mc R_{b,L_B,S,\bs r} (\bs U)$ and,
    by invoking \Cref{lemma:admissranks,lemma:rankext},
    \begin{alignat*}{1}
        \full(\bs U)&\leq b(r_1^A+r_1^B)+b\sum_{\nu=2}^{L_AD}
        (r^A_{\nu-1}+r^B_{\nu-1})(r^A_\nu+r^B_\nu)\\
        & +b\sum_{\nu=L_AD+1}^{L_BD}([\dim S]^D+r^B_{\nu-1})
        ([\dim S]^D+r_\nu^B)\\
        & +\dim S\sum_{\nu=L_BD+1}^{(L_B+1)D-1}([\dim S]^{(L_B+1)D-\nu+1}
        +r^B_{\nu-1})
        ([\dim S]^{(L_B+1)D-\nu}
        +r^B_{\nu})\\
        & +(\dim S+r^B_{(L_B+1)D-1})\dim S\\
        &\leq br_1^A+b\sum_{\nu=2}^{L_AD}r^A_{\nu-1}r^A_\nu
        + b\sum_{\nu=2}^{L_AD}r^A_{\nu-1}r^B_\nu\\
        &+br_1^B+b\sum_{\nu=2}^{L_AD}r^B_{\nu-1}r^B_\nu
        + b \sum_{\nu=2}^{L_AD}r^B_{\nu-1}r^A_\nu+
        b\sum_{\nu=L_AD+1}^{L_BD}r^B_{\nu-1}r^B_\nu \\
        &
        + b(\dim S)^D\sum_{\nu=L_AD+1}^{L_BD}r^B_{\nu-1}+r^B_\nu+\dim S\sum_{\nu=L_BD+1}^{(L_B+1)D-1}r^B_{\nu-1}r^B_\nu\\
        &
        +
        (\dim S)^{D+1}\sum_{\nu=L_BD+1}^{(L_B+1)D-1}r^B_\nu
        +(\dim S)^D\sum_{\nu=L_BD+1}^{(L_B+1)D-1}r^B_{\nu-1}+\dim Sr^B_{(L_B+1)D-1}
        \\ &
        +b(\dim S)^{2D}(L_B-L_A)D+(\dim S)^{D+1}(D-1)
        +(\dim S)^2\\
        &\leq
        \underbrace{(2+2b+3[\dim S]^D+
        [\dim S]^{D+1}+[\dim S]^D+
        [\dim S]^{2D})}_{=:c}n\\
        &+
        (\dim S)^{D+1}(D-1)+(\dim S)^2,
    \end{alignat*}
    where we used $L_B-L_A\leq L_B\leq\full(\bs U^B)\leq n$ and the fact that 
    $$\sum_\nu r_{\nu-1}^A r_\nu^B \le (\sum_\nu (r_{\nu-1}^A)^2 )^{1/2}(\sum_\nu (r_{\nu}^B)^2 )^{1/2} \le
    (\sum_\nu b r_{\nu-1}^A r_\nu^A )^{1/2}(\sum_\nu  b r_{\nu-1}^B r_{\nu}^B )^{1/2} \le n ,$$
    and similarly for bounding $\sum_\nu r_{\nu-1}^B r_\nu^A$ by $n$.
    This shows the statement for $\PhiF$.
    
    (ii) For two functions
    $\varphi_A\in\Phi_{b,L,S,\bs r^A}$, $\varphi_B\in\Phi_{b,L,S,\bs r^B}$,
    it is not difficult to
    see that for $\varphi_A+\varphi_B=
    \mc R_{b,L,S,\bs r}(\bs U)$,
    it holds $\sparse(\bs U)\leq\sparse(\bs U^A)+\sparse(\bs U^B)$:
    we refer to \cite[Lemma 3.12]{ali2023approximation} for more details.
    Thus, together with \Cref{lemma:sparsity},
    this shows \ref{P4} for $\Phi^{\mathcal{S}}$ with
    \begin{align*}
        c:=2(1+(\dim S)^{2D}).
    \end{align*}
\end{proof}

\begin{remark}[Index ordering and \ref{P4}]
    A key part of the proof in \Cref{lemma:p4} is \Cref{lemma:rankext},
    i.e., when increasing the resolution level $L\in\N$ the ranks either remain
    the same or are bounded by a constant independent of $L$.
    We stress that for this to hold the tree network structure was essential:
    choosing a different tree network can lead to
    $\Phi_n$ that violate \ref{P4}.
    
    For instance, consider a natural re-ordering of the tree considered in
    \Cref{eq:rep} as follows.
    We keep a uniform parameter $L\in\N$ for the resolution
    depth in each dimension: i.e., we do not consider
    anisotropic approximation.
    We re-label the   variables as follows
    \begin{align*}
        (\underbrace{i_1^1,\ldots,i_D^L,\bar x_1}_{\text{1st
        coordinate}},\ldots,\underbrace{i_D^1,\ldots,\bar x_D}_{\text{$D$-th coordinate}}).
    \end{align*}
    Consider a TT-network with the index order
    corresponding to the above labeling. Then, unlike in \Cref{lemma:rankext},
    for any $l=0,1,\ldots$, we can only bound the ranks as
    $r_{L+l}\leq(b^L\dim S)^{(D-1)}$
    or $r_{L+l}\leq r_L\min\{b^l,(\dim S)^{D-1}\}$, i.e.,
    this bound depends either on $L$, or $L$ and $l$.
    In this situation, as was demonstrated in \cite[Proposition 3.4]{ali2023approximation},
    one can add two functions $\varphi_A,\varphi_B\in\Phi_n$,
    where one is high resolution and of low-rank
    and vice versa. The sum is both of high resolution and high-rank
    and, thus, we can at best only ensure
    $\Phi_n+\Phi_n\subset\Phi_{cn^2}$.
    In conclusion, \ref{P4} can be guaranteed only for specific
    structures of tree networks.
\end{remark}

\begin{remark}[The constant $c$ in \ref{P4}]
    The constant $c$ in $\Phi_n+\Phi_n\subset\Phi_{cn}$ depends
    on $D$ as $(\dim S)^{2D}$:
    it is exponential in $D$ if $\dim S>1$.
    The structure of our tree network
    implies the function space we consider for \revZ{functions} of the last $D$
    variables $(\bar x_1,\ldots,\bar x_D)$
    is $S^{\otimes D}$
    and hence the curse of dimensionality for $\dim S>1$. 

    On one hand,
    the chosen tree structure (ordering of variables)
    ensures \ref{P4} is satisfied.
    On the other hand,
    this comes at the expense of an
    unfavorable tree choice for separating variables
    corresponding to different coordinate directions, such that,
    in the worst case, the representation complexity of a sum
    of two approximands will be much larger than the
    respective individual representation complexities.
    
    If we were to combine approximations that are adaptive
    in the ranks separating different coordinate directions,
    the ranks separating different discretization levels in each coordinate
    direction \textbf{and} depth of discretization --
    this would result in a highly non-linear approximation tool that
	avoids the aforementioned exponential constant scaling    
    but \revQ{violates the property \ref{P4} and thus,  the corresponding approximation spaces $A^\alpha_q$ would not be linear spaces.}  
    
    Note however that for using for $S$ the space of constant functions ($\dim(S)=1$) results in a constant $c$ independent of $D$.   
\end{remark}

\begin{definition}[TT-Approximation Classes]\label{def:tnappspaces}
    For $0<p,q\leq\infty$ and $\alpha>0$, we consider the following approximation classes
    \begin{align*}
        \Fsq&:=\Asq(L^p, \PhiF),\\
        \Ssq&:=\Asq(L^p, \PhiS)
        .
    \end{align*}
\end{definition}

We obtain the first main result of this work.

\begin{theorem}[TT-Approximation Spaces]\label{thm:appspaces}
    Let $S$ be closed under $b$-adic dilation, $0<p,q\leq\infty$
    and $\alpha>0$.
    Then, the classes $\Fsq$ and $\Ssq$ 
    are (quasi-)Banach spaces
    satisfying the continuous embeddings
    \begin{align}\label{eq:appspacesembeds}
        \Fsq\hookrightarrow\Ssq
        \hookrightarrow
        \mc F^{\alpha/2}_{q}(L^p).
    \end{align}
\end{theorem}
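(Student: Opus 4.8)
The plan is to derive the theorem from the general theory of approximation spaces, exactly as in parts I and II. The decisive ingredients are the structural properties \ref{P1}--\ref{P4} of the three approximation tools $\PhiF$, $\PhiS$, $\PhiN$, which have just been established: \Cref{lemma:ps} gives \ref{P1}--\ref{P3} (and \ref{P5}), while \Cref{lemma:p4} gives \ref{P4} with the explicit constant $c=\mc O([\dim S]^{2D})$. The standard machinery (see \cite{devore_1998} and the abstract treatment quoted in \cite{partI}) says that whenever an approximation family satisfies \ref{P1}--\ref{P4} in a quasi-normed ambient space $X$, the resulting class $\Asq(X)$ is a quasi-normed vector space that is in fact complete, i.e.\ a (quasi-)Banach space. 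So the first step is simply to invoke this abstract result with $X=L^p([0,1)^D)$ for each of the three tools; completeness follows as in \cite[Theorem 3.?]{partI}, the only change being that the constant in \ref{P4} now depends on $D$ through $(\dim S)^{2D}$, which is harmless since it is a fixed finite constant.

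For the embedding chain in \cref{eq:appspacesembeds}, the first two inclusions $\Fsq\hookrightarrow\Ssq\hookrightarrow\Nsq$ are immediate from the pointwise ordering of the complexity measures: for any $\bs U\in\PbS$ one has $\neuron(\bs U)\le \sparse(\bs U)\le \full(\bs U)$ (the number of neurons is at most the number of nonzero parameters, which is at most the total number of parameters). Consequently $\PhiF_n\subset\PhiS_n\subset\PhiN_n$ for every $n$, hence $E(f,\PhiN_n)\le E(f,\PhiS_n)\le E(f,\PhiF_n)$, and the monotonicity of the $\Asq$ quasi-norm in the error sequence yields $\norm{f}[\Nsq]\le\norm{f}[\Ssq]\le\norm{f}[\Fsq]$, with embedding constant one. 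This part is routine.

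The substantive inclusion is the last one, $\Nsq\hookrightarrow\mc F^{\alpha/2}_q(L^p)$. The idea is to compare the ``neuron'' tool with the ``full'' (representation-complexity) tool directly: if a function $\varphi$ can be represented with $\neuron(\varphi)=\norm{\bs r}[\ell^1]=N$, then by the admissible-rank bounds of \Cref{lemma:admissranks} (each $r_\nu\le b r_{\nu\pm1}$, so consecutive ranks are comparable) one obtains a crude but uniform estimate $\full(\varphi)\le C\,\sum_\nu r_{\nu-1}r_\nu\le C'\,(\sum_\nu r_\nu)^2 = C' N^2$, using Cauchy--Schwarz together with $r_{\nu-1}r_\nu\le\tfrac{b}{2}(r_{\nu-1}^2+r_\nu^2)$ type inequalities and the fact that the ``boundary'' factors $b$ and $\dim S$ are constants. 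This gives $\PhiN_n\subset\PhiF_{Cn^2}$, hence $E(f,\PhiF_{Cn^2})\le E(f,\PhiN_n)$, i.e.\ $E_{Cn^2}^{\mc F}(f)\le E_n^{\mc N}(f)$. Feeding this into the definition of the $\Asq$ quasi-norms and performing the change of summation index $m\sim n^2$ (so $m^{\alpha/2}\sim n^\alpha$ and $\d m/m\sim 2\,\d n/n$) converts a finite $\norm{f}[\Nsq]$ into a finite $\norm{f}[\mc F^{\alpha/2}_q(L^p)]$, up to a constant depending on $\alpha,q,D,b,\dim S$. The main obstacle — and the step deserving the most care — is precisely this quadratic relationship $\full\lesssim\neuron^2$: one must verify that the constants hidden in the rank-comparability bounds of \Cref{lemma:admissranks} do not themselves grow with the level $L$ (they do not, since $b$ and $\dim S$ are fixed), and one must handle the index-substitution in the $\ell^q(1/n)$-weighted sum carefully in both regimes $q<\infty$ and $q=\infty$. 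Everything else is bookkeeping analogous to \cite[Section 3]{partI}.
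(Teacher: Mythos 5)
Your proposal follows essentially the same route as the paper: the (quasi-)Banach property is obtained from \ref{P1}--\ref{P4} via \Cref{lemma:ps,lemma:p4} and the abstract approximation-space machinery of \cite{partI,devore_1998}, and the chain \cref{eq:appspacesembeds} from the complexity comparisons $\neuron\leq\sparse\leq\full\lesssim\neuron^2$ together with the standard re-indexing argument, which is exactly the part I argument the paper invokes. The only point to phrase carefully is that $\neuron(\bs U)\leq\sparse(\bs U)$ need not hold for an arbitrary parametrization (one first removes zero slices so that each rank index carries a nonzero entry), but this only affects the minimized complexities $\neuron(\varphi)\leq\sparse(\varphi)$, which is all you use.
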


\begin{proof}
    The first statement follows from
    \Cref{lemma:ps,lemma:p4}, see also \cite[Theorem 3.17]{ali2023approximation}
    for $D=1$.
    \Cref{eq:appspacesembeds} follows by similar arguments as in
    \cite[Theorem 3.19]{ali2023approximation}. \rev{More precisely, following the proof of  \cite[Proposition 3.18]{ali2023approximation} (easily extended to the case $D>1$), one can prove that for any $\varphi \in V_{b,S^{\otimes D}}^D$, 
    $$
    \sparse(\varphi) \le \full(\varphi) \le d + c \, \sparse(\varphi)^2,
    $$
    for some constants $d,c.$ The first inequality is obvious. The second one 
    can be obtained  by considering an intermediate complexity measure  $\neuron(\varphi)$ given by the sum of the TT-ranks of $\varphi$, and noting that $ \full(\varphi) \le d + c \, \neuron(\varphi)^2$ and $\neuron(\varphi) \le \sparse(\varphi)$.  This implies $$\PhiF_n \subset \PhiS_n \subset \PhiF_{d + c \, n^2},$$ and we deduce the embeddings from the definition of approximation spaces.}
\end{proof}


\section{Embeddings of Smoothness Classes}\label{sec:embedd}
In this section, \revY{we review spline systems} and we show how a linear combination of splines can be encoded as a
TT and estimate the resulting complexity. This will lead us to approximation rates for \revY{functions from various Besov spaces, which are characterized by spline systems (see \Cref{sec:review}).}  \revY{More precisely, we consider 
the isotropic Besov spaces $B^{s_\I}_q(L^p(\Omega))$
corresponding to the vertical line in \Cref{fig:DeVore}}, \revY{the more general anisotropic Besov spaces $AB^{\bs{\alpha}}_q(L^p(\Omega))$ with smoothness parameters $\bs{\alpha} = (\alpha_1, \dots , \alpha_D) \in \R^D_{>0} $, and 
   the Besov spaces of mixed dominating smoothness $MB^{s_\M}_q(L^p(\Omega))$. For an anisotropic smoothness parameter $\bs{\alpha}=(\alpha_1, \dots , \alpha_D)$, we let $\underline \alpha := \min_\nu \alpha_\nu$, $\bar \alpha := \max_\nu \alpha_\nu$, and we define the 
    aggregated smoothness $$s_\A:=s_\A(\bs{\alpha}):=
    D\left(\alpha_1^{-1}+\ldots+\alpha_D^{-1}\right)^{-1}.$$
    }\revY{Note that $B^{s_\I}_q(L^p(\Omega))$ is a particular instance of $AB^{\bs{\alpha}}_q(L^p(\Omega))$ with $\bs{\alpha} = (s_\I, \dots ,s_\I)$ and $s_\A = \bar \alpha = \underline{\alpha} = s_\I$.}
  
Interestingly, the complexity of encoding
classical approximation tools differs for linear and nonlinear
spline approximation, where in the nonlinear case the sparsity
of the tensor cores will play an important role.
We conclude by showing that the approximation class of TTs is not embedded in any
Besov space.
\par
\revQ{Throughout this section, the notation $n\lesssim k$ (resp.  $n \gtrsim k$) means that there exists a constant  $C$ (resp. $c$) such that $n \le C k$ (resp. $n \ge c k$). We write  $n \sim k$ when   both  $n\lesssim k$ and $n \gtrsim k$ hold. The dependence of these constants on additional parameters will be stated explicitly when  relevant.}

\subsection{Spline systems}\label{sec:spline-systems}
It is well known that optimal approximation
for the Besov spaces \revY{defined in \Cref{sec:besov}} can be achieved by
either systems of dilated splines
or wavelets, the latter being numerically advantageous
as wavelets form a stable multiscale basis for Besov spaces.
Since in this work we use classical approximation tools
only as an intermediate proof vehicle, we will focus
on the theoretically simpler case of spline systems.

\begin{definition}[Dilated Splines]\label{def:affsys}
    Let $\phi_{\bar m}:\R\rightarrow\R$ 
    be the cardinal B-spline of
    polynomial degree $\bar m\in\N_{\geq 0}$.
    Let $l\in\N_{\geq 0}$ and $j\in\{-\bar m,\ldots,b^l-1\}$.
    Then, the univariate spline $\varphi_{l,j}$ dilated $l$-times and shifted
    by $j$ (and normalized in $L^p$) is defined as
    \begin{align*}
        \varphi_{l,j}(x):=b^{l/p}\phi_{\bar m}(b^lx-j).
    \end{align*}
    The $D$-dimensional multivariate spline is defined by taking tensor
    products as
    \begin{align*}
        \varphi_{(l_1,\ldots,l_D), (j_1,\ldots,j_D)}^D:=
        \bigotimes_{\nu=1}^D\varphi_{l_\nu, j_\nu}.
    \end{align*}
\end{definition}

The dilated splines defined above have $D$ resolution levels $l_1,\ldots,l_D$.
We will define three types of spline systems where each is perfectly
suited for approximation in the three types of Besov spaces
introduced \revY{in \Cref{sec:review}}. Intuitively, it is clear
that for isotropic Besov spaces the effective resolution level
should be the same in all coordinate directions;
for anisotropic, the effective resolution levels in each coordinate
direction should
vary according to the smoothness parameters
$\bs\alpha=(\alpha_1,\ldots,\alpha_D)$;
for mixed, the effective resolution level
should be the sum of the unidirectional resolution levels
$\sum_\nu l_\nu$, as is the case in hyperbolic cross approximations.

\begin{definition}[Multi-Dimensional Spline Systems]\leavevmode
    \begin{enumerate}[label=(\roman*)]
        \item We define the \emph{isotropic} index set
        \begin{align*}
            \mc J_\I:=\bigcup_{l=0}^\infty
            \{(l,\ldots,l)\}\times
            \{-\bar m,\ldots,b^l-1\}^D.
        \end{align*}
        Correspondingly, we define the isotropic spline system
        \begin{align*}
            \Phi_\I:=\left\{\varphi_{\lambda_\I}:\;
            \lambda_\I\in\mc J_\I\right\}.
        \end{align*}
        We use the shorthand notation
        $|\lambda_\I|:=|(l,\ldots,l,j_1,\ldots,j_D)|:=l$ \revZ{for the level of the index $\lambda_\I$.}        
        
        \item For the anisotropic case, let $\bs\alpha=(\alpha_1,\ldots,\alpha_D)$
        denote the smoothness multi-index.
        Set $\alpha'_\nu:=\underline{\alpha}(1/\alpha_\nu)$.
        For a level parameter $l\in\N_{\geq 0}$,
        we define the $\nu$-th level as
        $l_\nu(l):=\lfloor l\alpha'_\nu\rfloor$.
        With this
        we define the \emph{anisotropic} index set as
        \begin{align*}
            \mc J_\A:=\bigcup_{l=0}^\infty
            \{(l_1(l),\ldots,l_D(l))\}\times
            \{-\bar m,\ldots,b^{l_1(l)}-1\}\times\ldots\times
            \{-\bar m,\ldots,b^{l_D(l)}-1\}.
        \end{align*}
        The \emph{anisotropic} spline system is defined accordingly as
        \begin{align*}
            \Phi_\A:=\left\{\varphi_{\lambda_\A}:\;\lambda_\A\in\mc J_\A\right\},
        \end{align*}
        and we again use the shorthand notation
        $|\lambda_\A|:=|(l_1(l),\ldots,l_D(l),j_1,\ldots,j_D)|:=l$. \revZ{An index $\lambda_\A$ such that  $|\lambda_\A| = l$ has different levels $l_1(l),\ldots,l_D(l)$ in the different dimensions, which depend  on the anisotropic smoothness parameters $\boldsymbol{\alpha}$}. \rev{In the special case $\boldsymbol{\alpha} = (\alpha,\dots, \alpha)$, $\mc J_\A = \mc J_\I$ and  $\Phi_\A = \Phi_\I$.}

        \item Finally, for the mixed case we define the
        index set as        
        \begin{align*}
            \mc J_\M:=\bigcup_{(l_1,\ldots,l_D)\in(\N_{\geq 0)^D}}
            \{(l_1,\ldots,l_D)\}\times
            \{-\bar m,\ldots,b^{l_1}-1\}\times\ldots\times
            \{-\bar m,\ldots,b^{l_D}-1\},
        \end{align*}
         and the spline system as
        \begin{align*}
            \Phi_\M:=\left\{\varphi_{\lambda_\M}:\;\lambda_\M\in\mc J_\M\right\}.
        \end{align*}
        We use the shorthand notation
        $|\lambda_\M|_1:=|(l_1,\ldots,l_D)|_1:=l_1+\ldots+l_D$.
    \end{enumerate}
\end{definition}

Having defined spline systems,
we need to specify how an element of a Besov space can be
decomposed in a given system.
For reasons of numerical stability,
one would typically decompose functions
in a wavelet system that forms a stable basis.
Our results would remain the same for this approach,
however, with a tighter restriction on the integrability
parameter $p$, since
for $0<p\leq 1$ one would have to replace
the $L^p$-space with a Hardy space.
Thus, to avoid unnecessary technicalities,
we stick to the spline characterization.

Assume
\begin{align}\label{eq:mbar_high_enough}
    \min\{\bar m+1,\bar m+1/p\}\geq s_\I,s_\M, \bar\alpha,
\end{align}
depending on the space in question.
We illustrate the decomposition procedure for the isotropic case,
all others being analogous.
Introduce a uniform partition $D_l$ of
$[0,1)^D$ into $b^{lD}$ elements of measure $b^{-lD}$.
For each element $K\in D_l$, introduce a near-best $L^p$ polynomial
approximation $P_K$ on $K$ of degree $\bar m$ of $f$.
Let $S_l(f)$ be defined piecewise such that $S_l(f)=P_K$ on $K$.
Finally, introduce a quasi-interpolant $ Q_l(S_l(f))\in\linspan\Phi_\I$
as defined in \cite[Section 4]{devore1988interpolation}.
The final level $l$ approximation in
the spline system $\Phi_\I$ is
$T_l(f):=Q_l(S_l(f))$ for any $f\in L^p([0,1)^D)$.
We then decompose $f$ as
\begin{align}\label{eq:decomp}
    f=\sum_{l=0}^\infty T_{l}(f)-T_{l-1}(f)=
    \sum_{l=0}^\infty\sum_{|\lambda_\I|=l}d_{\lambda_\I,p}(f)\varphi_{\lambda_\I},
\end{align}
with the convention $T_{-1}(f)=0$.
I.e., the coefficients $d_{\lambda_\I,p}(f)$
are the spline coefficients\footnote{Normalized in $L^p$, since
we normalized $\varphi_{\lambda_\I}$ in $L^p$.} of the level differences
$T_{l}(f)-T_{l-1}(f)$ of quasi-interpolants of near-best polynomial
approximations of $f$.
For comparison, in a wavelet basis these coefficients
would simply be the $L^2$-inner products with wavelets
on the corresponding level (up to re-scaling).

The construction for the anisotropic case is analogous,
adjusting the index sets $\mc J$ and projections accordingly, \revZ{see \cite{Leisner}}. For the mixed
case, one requires projections onto the hyperbolic cross
consisting of splines corresponding
to the index set
$\Lambda_L:=\{\lambda_\M\in\mc J_\M:\;
|\lambda_\M|_1=L\}$,
i.e., these can be constructed via adding
details for all multilevels $l\in \N^D$ that satisfy
$\sum_\nu l_\nu\leq L$.
In literature such constructions are typically
performed via wavelets, which form a stable basis for the detail
spaces where $T_l(f)-T_{l-1}(f)$ lives.
This would not change our analysis and we stick to
splines to avoid unnecessary technicalities.
For details we refer to \cite{devore1988interpolation,Leisner,Hansen2012,NonCompact}.

\subsection{Encoding Splines}
An $n$-term sum of dilated splines is of the form
$$\varphi_n:=\sum_{\lambda\in\Lambda}  \revQ{c_{\lambda}}\varphi_\lambda, \quad \#\Lambda\leq n, $$
where 
$\lambda=(l_1,\ldots,l_D,j_1,\ldots,j_D)$ for some resolution levels 
$(l_1,\ldots,l_D)\in(\N_{\geq 0})^D$,  and 
$$
\varphi_\lambda :=\varphi_{l_1,j_1}\otimes\ldots\otimes
        \varphi_{l_D,j_D},  \quad \varphi_{l,j}(x) := b^{l/p} \phi_{\bar m}(b^l x - j),
$$
with $\phi_{\bar m}$ the cardinal B-spline of polynomial degree $\bar m$, \revY{and $\varphi_{l,j}(x)$ a univariate spline normalized in $L^p$}. 
We refer to Section \ref{sec:spline-systems} for a detailed introduction of spline systems and related approximation results.  
To encode $\varphi_n$ as a TT in $V_{b,m}^D$, we proceed as follows.
\begin{enumerate}
    \item    For $\bar m\leq m$, we represent the cardinal B-spline $\phi_{\bar m}:
    \R\rightarrow\R$ as a TT
    in $V_{b,m}^1$ and estimate
    the resulting complexity.
    For $\bar m>m$, we approximate $\phi_{\bar m}$
    by some $\tilde{\phi}_m\in V_{b,m}^1$ and estimate
    the resulting complexity depending on the
    approximation error $\delta>0$.
    
    \item    We represent (for $\bar m\leq m$) or approximate (for $\bar m>m$)
    $\varphi_{l,j}$ by $\tilde{\varphi}_{l,j}:=b^{l/p}\tilde{\phi}_{m}(b^l\cdot-j)$
    and estimate the resulting error and complexity.
    
    \item    We represent or approximate the tensor product
    \begin{align*}
        \varphi_\lambda=\varphi_{l_1,j_1}\otimes\ldots\otimes
        \varphi_{l_D,j_D}\quad\text{by}\quad
        \tilde{\varphi}_\lambda:=\tilde{\varphi}_{l_1,j_1}\otimes\ldots\otimes
        \tilde{\varphi}_{l_D,j_D},
    \end{align*}
    and estimate the resulting error and complexity.
    
    \item    Finally, the sum
    $\varphi_n=\sum_{\lambda\in\Lambda}c_\lambda\varphi_\lambda$
    is represented or approximated as
    $\tilde{\varphi}_n=\sum_{\lambda\in\Lambda}c_\lambda\tilde{\varphi}_\lambda$,
    where once again the error and complexity can be estimated with the
    previous steps.
\end{enumerate}

\begin{lemma}[Cardinal B-Splines]\label{lemma:bsplines}
    Let $\phi_{\bar m}:\R\rightarrow\R$ be a cardinal
    B-spline of polynomial degree $\bar m$. Then,
    $\phi_{\bar m}|_{[k,k+1)}$ is a polynomial of degree at most
    $\bar m$ for any $k\in\Z$.
    Consequently, if $\bar m\leq m$, then
    $$\phi_{\bar m,k} := \phi_{\bar m}(\cdot + k)|_{[0,1)}\in V^1_{b,0,m}=\Pm.$$
    
    If $\bar m>m$, fix $L_\delta\in\N$ and divide $[0,1)$ into
    $b^ {L_\delta}$ intervals $I_j:=[b^{-L_\delta}j,b^{-L_\delta}(j+1))$,
    $j=0,\ldots,b^{L_\delta}-1$.
    Let $P_j(f)$ denote the near-best polynomial approximation
    of $f$ in $L^p$ over $I_j$ with polynomial
    degree $m$ (as previously utilized in \cref{eq:decomp}),
    set to zero outside $I_j$.
    For $k\in \{0,\hdots,\bar m\}$, let $\tilde{\phi}_{m,k}:=\sum_{j=0}^{b^{L_\delta}-1}P_j(\phi_{\bar m,k})$.
    Then, we have
    \begin{enumerate}[label=(\roman*)]
        \item\label{ttranksbspline} it holds $\tilde{\phi}_{m,k}\in V^1_{b,L_\delta,m}$ with
        TT-ranks bounded by $\bar m+1$;
        
        \item\label{errorbspline} the approximation error is bounded as
        \begin{align*}
            \norm{\phi_{\bar m,k}-\tilde{\phi}_{m,k}}[L^p(I)]\leq c
            b^{-L_\delta(m+1)}\snorm{\phi_{\bar m,k}}[B^{m+1}_p(L^p(I))],
            \quad I:=[0,1),
        \end{align*}        
        with a constant $c>0$ depending only on
        $m$. 
    
        \item\label{complbspline} the complexities of  $\tilde{\phi}_{m,k}$ 
        are bounded as
         such that
        \begin{align*}
            \sparse(\tilde{\phi}_{m,k})&\leq\full(\tilde{\phi}_{m,k})\leq b^2+b(\bar m+1)^2(L_\delta-1)
            +(m+1)^2
            .
        \end{align*}
    \end{enumerate}

    This implies that to ensure an approximation accuracy
    $\delta>0$, we can set
    \begin{align}\label{eq:Ldelta}
        L_\delta:=\left\lceil
        \frac{1}{m+1}\left|
        \log_b\left[\frac{c\snorm{\phi_{\bar m,k}}[B^{m+1}_p(L^p(I))]}{\delta}
        \right]\right|\right\rceil,
    \end{align}
    i.e., the encoding complexity of $\tilde{\phi}_{m,k} := \tilde \phi_{m}(\cdot+k)$
    depends logarithmically on $\delta^{-1}$.
\end{lemma}
\begin{proof}
    Parts \ref{ttranksbspline} and \ref{complbspline} follow
    from \cite[Section 4.2]{ali2023approximation}.
    Part \ref{errorbspline} follows from, e.g.,
    \cite[Section 4]{devore1988interpolation}.
\end{proof}

Next, we want to deduce a representation or approximation
for a dilated spline $\varphi_{l,j}$.
The operations of dilation and translation
are, in a sense, inverse to tensorization: while the former ``compresses''
and shifts the function, the latter ``zooms in'' onto a piece. 
Let $\phi_{\bar m}:\R\rightarrow\R$ be a
cardinal B-spline as above.
Then, $\phi_{\bar m}$ is supported on
$[0,\bar m+1]$ where it is polynomial
over each integer interval $(j,j+1)$, $j=0,\ldots,\bar m$.
Define $ {\varphi}_{l,j,k}:[0,1)\rightarrow\R$
for $j=0,\ldots,b^l-1$
and $k=0,\ldots,\bar m$ as
    \begin{align*}
         {\varphi}_{l,j,k}(x):=
        \begin{cases}
            b^{l/p}\phi_{\bar m,k}(b^lx-j),&\quad\text{for}\quad x\in[b^{-l}j,b^{-l}(j+1)),\\
            0,&\quad\text{otherwise}.
        \end{cases}
    \end{align*}
    Then,
    \begin{align*}
        \varphi_{l,j}(x)=b^{l/p}\phi_{\bar m}(b^lx-j)
        =\sum_{k=0}^{\bar m} {\varphi}_{l,j+k,k}(x).
    \end{align*}
    
    The utility of $ {\varphi}_{l,j, k}$ is that we have the following
    identity: for any $l\in\N_{\geq 0}$,
    $  {\bs\varphi}_{l,j,k}:= T^1_{b,l}( {\varphi}_{l,j,k})
    \in\bs V^1_{b,l,\bar m}$ with
    \begin{align}\label{eq:rankone}
         {\bs\varphi}_{l,j,k}=
        (b^{1/p}\delta_{j_1})\otimes\cdots\otimes
        (b^{1/p}\delta_{j_l})\otimes\phi_{\bar m,k},
    \end{align}
    and $j=\sum_{k=1}^lj_kb^{l-k}$, \revZ{where $\delta_j$ is the $j$-th canonical vector in $\R^b$.} 
    Therefore, 
    we have a simple representation for the tensorization
    of $ {\varphi}_{l,j,k}$. Hence, to obtain a tensorization
    of $(\varphi_{l,j})_{|[0,1)}$ we can 
    sum at most
    $\bar m+1$ tensorizations of $ {\varphi}_{l,j,k}$. See also
    \cite[Corollary 2.5 and Lemma 2.6]{ali2023approximation} for more details.

\begin{lemma}[Dilated Splines]\label{lemma:dilated}\leavevmode
    \begin{enumerate}[label=(\roman*)]
        \item     For $\bar m\leq m$, we have $\varphi_{l,j}\in V^1_{b,L,m}$
                    for any $L\geq l$ with
                    TT-ranks bounded as $r_\nu\leq\bar m+1$
                    for $1\leq \nu 
                    \leq L$.
                    Moreover,
                    \begin{align*}
                        \sparse(\varphi_{l,j}
                        )&\leq\full(\varphi_{l,j})
                        \leq b^2+b(\bar m+1)^2(l-1)+
                        b(\bar m+1)^2(L-l)+(m+1)^2
                        .
                    \end{align*}
        \item   For $\bar m>m$, we use the
        approximations 
        $\tilde{\phi}_{m,k}$ from \Cref{lemma:bsplines} and set
        $$\tilde{\varphi}_{l,j}:=\sum_{k=0}^{\bar m}
        b^{l/p}\tilde{\phi}_{\bar m,k}(b^l\cdot-j)\indicator_{[b^{-l}j,
        b^{-l}(j+1))}.$$
        Then, due to the $L^p$-normalization, for $I:=[0,1)$
        \begin{align*}
            \norm{\varphi_{l,j}-\tilde{\varphi}_{l,j}}[L^p(I)]\leq\delta,
        \end{align*}
        and $\tilde{\varphi}_{l,j}\in V^1_{b,L,m}$
        for any $L\geq l+L_\delta$, \revZ{with $L_\delta$ defined in \eqref{eq:Ldelta}}.
        The TT-ranks are bounded as before
        $r_\nu\leq\bar m+1$
        for $1\leq \nu\leq l+L_\delta$ and $r_\nu\leq m+1$
        for $l+L_\delta<\nu\leq L$, and
                    \begin{align*}
                        \sparse(\tilde{\varphi}_{l,j})&\leq\full(\tilde{\varphi}_{l,j}) \\
                        &
                        \leq b^2+b(\bar m+1)^2(l+L_\delta-1)
                        +b(m+1)^2(L-l-L_\delta)+(m+1)^2
                        .
                    \end{align*}
    \end{enumerate}
\end{lemma}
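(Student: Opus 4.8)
The plan is to push everything through the single-level identity \eqref{eq:rankone}, $\bs\varphi_{l,j,k}=(b^{1/p}\delta_{j_1})\otimes\cdots\otimes(b^{1/p}\delta_{j_l})\otimes\varphi_{\bar m,k}$, together with the hierarchy \Cref{thm:hierarchy} and the rank--extension bound \Cref{lemma:rankext}, applied in dimension $D=1$ with $S$ a polynomial space (closed under $b$-adic dilation). \emph{Part (i).} I would first tensorize at the coarse level $l$: using $\varphi_{l,j}=\sum_k\varphi_{l,j+k,k}$ --- the sum over those $k\in\{0,\ldots,\bar m\}$ for which $\varphi_{l,j+k,k}$ does not vanish on $[0,1)$ --- together with \eqref{eq:rankone},
\begin{align*}
    T^1_{b,l}(\varphi_{l,j})=\sum_k\big(b^{1/p}\delta_{(j+k)_1}\big)\otimes\cdots\otimes\big(b^{1/p}\delta_{(j+k)_l}\big)\otimes\varphi_{\bar m,k},
\end{align*}
a sum of at most $\bar m+1$ rank-one tensors whose last factor lies in $\mathbb P_{\bar m}\subseteq\Pm$ (by \Cref{lemma:bsplines}, since $\bar m\le m$). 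Hence $\varphi_{l,j}\in V^1_{b,l,\bar m}$, every TT-rank of this tensor is $\le\bar m+1$, and $r_1\le b$ since $\dim\R^{I_b}=b$. As $\mathbb P_{\bar m}$ is closed under $b$-adic dilation, \Cref{thm:hierarchy} gives $\varphi_{l,j}\in V^1_{b,L,\bar m}\subseteq V^1_{b,L,m}$ for all $L\ge l$; feeding the rank vector $r_\nu^A=\min\{b^\nu,\bar m+1\}$ into \Cref{lemma:rankext} (with $L_A=l$, $L_B=L$, $\dim S=\bar m+1$) produces a level-$L$ TT-representation with $r_\nu\le\bar m+1$ for every $1\le\nu\le L$. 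The complexity bounds then follow by inserting $r_1\le b$, $r_\nu\le\bar m+1$ for $2\le\nu\le L$ and $\dim\Pm=m+1$ into the definitions of $\full$ and $\neuron$, splitting the core sum at $\nu=l$, and using $\sparse\le\full$ (a tensor has no more nonzero entries than entries).

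\emph{Part (ii).} Here $\bar m>m$, so $\varphi_{\bar m,k}\notin\Pm$ and I replace it by the near-best piecewise-degree-$m$ approximant $\tilde\varphi_{m,k}\in V^1_{b,L_\delta,m}$ of \Cref{lemma:bsplines}. For the error, the $L^p$-normalized dilation $g\mapsto b^{l/p}g(b^l\cdot-c)\indicator_{[b^{-l}c,b^{-l}(c+1))}$ is an $L^p$-isometry onto its image (change of variables; cf.\ the Isometry theorem), and the pieces composing $\varphi_{l,j}-\tilde\varphi_{l,j}$ have pairwise disjoint supports, so with $I:=[0,1)$
\begin{align*}
    \norm{\varphi_{l,j}-\tilde\varphi_{l,j}}[L^p(I)]^p
    &=\sum_k\norm{\varphi_{\bar m,k}-\tilde\varphi_{m,k}}[L^p(I)]^p\\
    &\le(\bar m+1)\,c^p\,b^{-L_\delta(m+1)p}\max_k\snorm{\varphi_{\bar m,k}}[B^{m+1}_p(L^p(I))]^p,
\end{align*}
and choosing $L_\delta$ as in \Cref{lemma:bsplines}, up to an absolute constant absorbing the factor $(\bar m+1)^{1/p}$, makes this $\le\delta$. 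For membership and ranks, the argument of Part (i) applies at level $l+L_\delta$ --- using \eqref{eq:rankone} on the first $l$ digit modes and \Cref{lemma:bsplines} (TT-ranks of $\tilde\varphi_{m,k}$ bounded by $\bar m+1$) on the remaining $L_\delta$ digit modes plus the continuous mode --- which gives $\tilde\varphi_{l,j}\in V^1_{b,l+L_\delta,m}$ with $r_\nu\le\bar m+1$ for $\nu\le l+L_\delta$, the last-mode content now lying in $\Pm$; then \Cref{thm:hierarchy} and \Cref{lemma:rankext} (with $S=\Pm$, $\dim S=m+1$) extend this to every $L\ge l+L_\delta$ with $r_\nu\le m+1$ for $l+L_\delta<\nu\le L$. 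The $\full$, $\neuron$, $\sparse$ bounds follow as in Part (i), now splitting the core range into $[1,l+L_\delta]$ (ranks $\le\bar m+1$) and $(l+L_\delta,L]$ (ranks $\le m+1$).

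\emph{Main obstacle.} The error estimate and the complexity arithmetic are routine; the delicate point is the rank bookkeeping. One must verify that concatenating the rank-one ``digit'' factors produced by dilation/translation with the low-rank factor carrying the (approximate) B-spline piece, and subsequently increasing the representation level, never inflates a TT-rank beyond $\bar m+1$ (resp.\ $m+1$ in the refined tail). This is precisely why the coarse-level decomposition into $\le\bar m+1$ rank-one terms must be combined with \Cref{lemma:rankext}: applying the latter alone would only give the generic bound $\min\{b^\nu,\dots\}$, which is useless at intermediate cuts. A minor secondary nuisance is keeping track of the harmless constant by which $L_\delta$ has to be enlarged so that the error remains $\le\delta$ after the $\bar m+1$ disjoint pieces are summed.
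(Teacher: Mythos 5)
The paper's own ``proof'' of this lemma is a one-line citation to Corollary~4.18 of part~II, and your argument reconstructs essentially the intended route: tensorize at the coarse level via the decomposition $\varphi_{l,j}=\sum_k\varphi_{l,j+k,k}$ and the rank-one identity \cref{eq:rankone}, observe that the continuous factor of every term lies in the common space $\mathbb P_{\bar m}$, and then push to level $L$ with \Cref{thm:hierarchy} and \Cref{lemma:rankext}. Part~(i) is complete and correct, and your handling of the error in part~(ii) (exact $p$-additivity over the disjoint pieces, isometry of the normalized dilation, and the harmless $(\bar m+1)^{1/p}$ enlargement of $L_\delta$) is if anything more careful than the paper's bare claim $\norm{\varphi_{l,j}-\tilde\varphi_{l,j}}[L^p(I)]\leq\delta$.

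The one step you flag but do not actually close is the rank bound $r_\nu\leq\bar m+1$ for the cuts $l<\nu\leq l+L_\delta$ in part~(ii). Writing $T^1_{b,l+L_\delta}(\tilde\varphi_{l,j})=\sum_{k}\bs\delta^{(k)}\otimes T^1_{b,L_\delta}(\tilde\varphi_{m,k})$ and invoking ``each summand has TT-ranks $\leq\bar m+1$'' only yields, by subadditivity of rank over the $\bar m+1$ summands, the bound $(\bar m+1)^2$ at these intermediate cuts. To get $\bar m+1$ one needs the stronger structural fact behind \Cref{lemma:bsplines}\ref{ttranksbspline}: at each such cut the minimal subspaces $\Umin$ of \emph{all} the tensors $T^1_{b,L_\delta}(\tilde\varphi_{m,k})$, $k=0,\dots,\bar m$, are contained in one \emph{common} space of dimension $\leq\bar m+1$, namely the image of $\mathbb P_{\bar m}$ under restriction to a dyadic subinterval followed by the piecewise projection $\sum_jP_j$ (all restrictions of all pieces $\varphi_{\bar m,k}$ live in $\mathbb P_{\bar m}$, so their projections live in a single $(\bar m+1)$-dimensional space, provided the projection used is linear on $\mathbb P_{\bar m}$). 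This common-subspace argument is exactly what Corollary~4.18 of part~II supplies; without it your rank bookkeeping, and hence the constants in $\full$ and $\neuron$ for part~(ii), would degrade to $(\bar m+1)^2$ in that range. Supplying that one observation makes the proof complete.
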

\begin{proof}
    Follows from \cite[Corollary 2.5 and Lemma 2.6]{ali2023approximation}.
\end{proof}

In a third step, we transition to the $D$-dimensional case
by using the above and estimating the error and complexity
of tensor products of dilated splines.
\begin{lemma}[Tensor Products of Dilated Splines]\label{lemma:tensorproducts}
    The following complexity bounds hold for $m>0$.

    Let $\varphi_\lambda=
    \varphi_{l_1,j_1}\otimes\ldots\otimes\varphi_{l_D,j_D}$
    be a tensor product of dilated cardinal
    B-splines of polynomial degree at most $\bar m$.
    Set $l:=\max_{\nu}l_\nu$.
    \begin{enumerate}[label=(\roman*)]
        \item If $\bar m\leq m$, then clearly
        $\varphi_\lambda|_{[0,1)^D}\in V_{b,L,m}^D$
        for any $L\geq l$.
        Moreover, the TT-ranks are bounded as 
        $r_\nu\leq(\bar m+1)^D$
        for $1\leq \nu 
        \leq LD$.
        The encoding complexity
        can thus be estimated as
                    \begin{align*}
                        \sparse(\varphi_{\lambda})&\leq\full(\varphi_{\lambda})
                       \\
                       & \leq b^2+b(\bar m+1)^{2D}(lD-1)+
                        b(\bar m+1)^{2D}(L-l)D+
                        (4/3)(\bar m+1)^{2D}
                        .
                    \end{align*}        
        
        \item If $\bar m>m$, then we approximate $\varphi_\lambda$
        by tensor products of functions from \Cref{lemma:dilated}
        via $$\tilde{\varphi}_\lambda=
    \tilde{\varphi}_{l_1,j_1}\otimes\ldots\otimes\tilde{\varphi}_{l_D,j_D}.$$
    The resulting error can be bounded as
    \begin{align}\label{eq:tensorerror}
        \norm{\varphi_\lambda-\tilde{\varphi}_\lambda}[L^p]\leq
        D(\delta+\norm{\phi_{\bar m}}[L^p])^{D-1}\delta.
    \end{align}
    For any $L\geq l+L_\delta$, \revZ{with $L_\delta$ defined in \eqref{eq:Ldelta}},
    $\tilde{\varphi}_\lambda\in V^D_{b,L,m}$
    and the TT-ranks are bounded by $(\bar m+1)^D$
    for $1\leq\nu\leq (l+L_\delta)D$,
    by $(m+1)^D$ for $(l+L_\delta)D<\nu\leq LD$.
    Consequently the encoding complexity
    can be estimated as
                    \begin{align*}
                        \sparse(\tilde{\varphi}_\lambda)&\leq\full(\tilde{\varphi}_\lambda) \\
                  &      \leq b^2+b(\bar m+1)^{2D}(l+L_\delta-1)D
                        +b(m+1)^{2D}(L-l-L_\delta)D
                        +(4/3)(m+1)^{2D}
                        .
                    \end{align*}    
    \end{enumerate}
\end{lemma}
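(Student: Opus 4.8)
The plan is to derive all of these estimates from the univariate bounds of \Cref{lemma:dilated}, using that the $D$-variate tensorization map $T^D_{b,L}$ is, up to a permutation of modes, the tensor product of $D$ copies of $T^1_{b,L}$, together with the sub-multiplicativity of ranks under tensor products and the crossnorm identity $\norm{g_1\otimes\cdots\otimes g_D}[L^p]=\prod_{\mu=1}^{D}\norm{g_\mu}[L^p]$. First I would tensorize $\varphi_\lambda$ (resp.\ $\tilde\varphi_\lambda$) at a common level $L\ge l$ (resp.\ $L\ge l+L_\delta$), so that after reordering the modes $T^D_{b,L}(\varphi_\lambda)=\bigotimes_{\mu=1}^{D}T^1_{b,L}(\varphi_{l_\mu,j_\mu})$ (resp.\ with the $\tilde\varphi_{l_\mu,j_\mu}$). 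The interleaved ordering $(i_1^1,\dots,i_D^1,\dots,i_1^L,\dots,i_D^L,\bar x_1,\dots,\bar x_D)$ used by $T^D_{b,L}$ has the property that every cut $\beta_\nu=\{1,\dots,\nu\}$ splits, inside the modes belonging to any one coordinate $\mu$, a prefix from the complementary suffix; hence, writing $\beta_\nu^{(\mu)}$ for the induced univariate cut, the multiplicativity of ranks under tensor products (cf.\ \Cref{lemma:admissranks}, applied coordinate-wise) gives
\[
    r_\nu\bigl(T^D_{b,L}\varphi_\lambda\bigr)\;\le\;\prod_{\mu=1}^{D}r_{\beta_\nu^{(\mu)}}\bigl(T^1_{b,L}\varphi_{l_\mu,j_\mu}\bigr).
\]

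Inserting the univariate rank bounds of \Cref{lemma:dilated} ($\le\bar m+1$ on the coarse modes, $\le m+1$ once every coordinate has passed its level $l_\mu$, resp.\ $l_\mu+L_\delta$, and a factor-wise decay along the final continuous mode) then yields $r_\nu\le(\bar m+1)^{D}$ for $\nu$ in the coarse range $\nu\le lD$ (resp.\ $\nu\le(l+L_\delta)D$), $r_\nu\le(m+1)^{D}$ on the remaining interior modes, and a decay like $(m+1)^{D-1},(m+1)^{D-2},\dots$ along the last $D$ cores; the B-spline approximation error does not affect these ranks, since $\tilde\varphi_{l_\mu,j_\mu}\in V^1_{b,L,m}$ exactly. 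Plugging these bounds into the explicit formula $\full(\bs U)=br_1+b\sum_{\nu=2}^{LD}r_{\nu-1}r_\nu+\dim S\sum_{\nu=LD+1}^{(L+1)D-1}r_{\nu-1}r_\nu+r_{(L+1)D-1}\dim S$ and splitting the middle sum at the level where the rank bound changes gives the three-term bound on $\full$; the geometric sum over the last $D$ cores, where the rank drops by a factor $\dim S$ at each step, collapses to the closed constants $\tfrac43(\bar m+1)^{2D}$ resp.\ $\tfrac43(m+1)^{2D}$, and analogously $\neuron\le\sum_\nu r_\nu$ picks up the $2(\bar m+1)^{D-1}$ resp.\ $2(m+1)^{D-1}$ term from that block. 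The bound $\sparse\le\full$ is immediate because $\sparse$ counts only the nonzero entries of the same parametrization. For $m=0$ the only change is that $\dim S=1$, so the continuous block has $D$ cores each of size $O(1)$ and every occurrence of the parameter tracking that block's size becomes $D$.

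For the approximation error \cref{eq:tensorerror} in the case $\bar m>m$, I would use the telescoping identity
\[
    \varphi_\lambda-\tilde\varphi_\lambda=\sum_{\nu=1}^{D}\Bigl(\bigotimes_{\mu<\nu}\tilde\varphi_{l_\mu,j_\mu}\Bigr)\otimes\bigl(\varphi_{l_\nu,j_\nu}-\tilde\varphi_{l_\nu,j_\nu}\bigr)\otimes\Bigl(\bigotimes_{\mu>\nu}\varphi_{l_\mu,j_\mu}\Bigr),
\]
the crossnorm identity above, and the $L^p$-normalization, which gives $\norm{\varphi_{l_\mu,j_\mu}}[L^p]\le\norm{\varphi_{\bar m}}[L^p]$ and (by \Cref{lemma:dilated}) $\norm{\tilde\varphi_{l_\mu,j_\mu}}[L^p]\le\norm{\varphi_{\bar m}}[L^p]+\delta$, together with $\norm{\varphi_{l_\nu,j_\nu}-\tilde\varphi_{l_\nu,j_\nu}}[L^p]\le\delta$. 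Each of the $D$ summands is then at most $\delta(\delta+\norm{\varphi_{\bar m}}[L^p])^{D-1}$, and the (quasi-)triangle inequality in $L^p$ yields the claim.

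The step I expect to be the main obstacle is the rank bound: making precise that the interleaved cut $\beta_\nu$ induces on each factor exactly the univariate cut controlled by \Cref{lemma:dilated}, and pinning down the single mode index at which the bound transitions from powers of $\bar m+1$ to powers of $m+1$ — this index is what fixes the three-term split of the $\full$-sum and hence the exact constants. Everything after that is bookkeeping with the formula for $\full(\bs U)$ and elementary geometric series.
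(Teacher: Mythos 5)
Your proposal follows essentially the same route as the paper's proof: the error bound via the telescoping decomposition of $\varphi_\lambda-\tilde\varphi_\lambda$ combined with the $L^p$-(quasi-)triangle inequality and the normalization of the univariate factors, and the rank bounds by observing that each TT-cut in the interleaved ordering induces a univariate cut in every coordinate, so the rank is bounded by the product of the univariate ranks from \Cref{lemma:dilated} (the paper writes this out explicitly as a sum over $k_1,\dots,k_D$ of products of partial evaluations, which is your multiplicativity argument made concrete), after which the $\full$/$\sparse$/$\neuron$ bounds are the same geometric-series bookkeeping. Your version is correct and, if anything, states the telescoping identity and the error estimate slightly more carefully than the paper does.
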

\begin{proof}
    For \cref{eq:tensorerror}, note that we can expand the error as
    \begin{align*}
        \varphi_\lambda-\tilde{\varphi}_\lambda&=
        (\varphi_{l_1,j_1}-\tilde{\varphi}_{l_1,j_1})
        \otimes\varphi_{l_2,j_2}\otimes\ldots\otimes\varphi_{l_D,j_D}\\
        &+\varphi_{l_1,j_1}\otimes(\varphi_{l_2,j_2}-\tilde{\varphi}_{l_2,j_2})
        \otimes\varphi_{l_3,j_3}\otimes\ldots\otimes\varphi_{l_D,j_D}+\ldots,
    \end{align*}
    and applying a triangle inequality and the fact
    that each $\varphi_{l_\nu,j_\nu}$ is normalized
    in $L^p$ yields \cref{eq:tensorerror}.
    
    Next, consider the case $\bar m\leq m$.
    Let $\mu=1,\ldots,L-1$ and $\nu=1,\ldots,D$. Then,
    from \Cref{lemma:dilated}, we know that
    any $\varphi_{l_\nu,j_\nu}$ admits a tensorization such that
    for $x=t_{b,L}^1(i_\nu^1,\ldots,i_\nu^L,\bar x_\nu)$
    \begin{align*}
        \varphi_{l_\nu,j_\nu}(x_\nu)=
        \bs\varphi_{l_\nu,j_\nu}(i_\nu^1,\ldots,i_\nu^L,\bar x_\nu)
        =\sum_{k=1}^{r_\mu}\bs v^\nu_k(i_\nu^1,\ldots,i_\nu^\mu)
        \bs w^\nu_k(i_\nu^{\mu+1},\ldots,i_\nu^L),
    \end{align*}
    for some $\bs v_k$ and $\bs w_k$, where $r_\mu$ is at most $\bar m+1$.
    Similarly for $\mu=L$.

    Thus, taking tensor products and for some
    $\mu=1,\ldots,L-1$, $\nu=1,\ldots,D-1$,
    we can write
    \begin{align*}
        &\varphi_{\lambda}(x)=
        \bs\varphi_{\lambda}(i_1^1,\ldots,\bar x_1,\ldots,\bar x_D)
        =\\
        &\sum_{k_1,\ldots,k_D=1}^{r_1,\ldots,r_D}
        \left(\prod_{\eta=1}^{\nu}\bs v^\eta_{k_\eta}(i_\eta^1,\ldots,i_{\eta}^\mu)\right)
        \left(\prod_{\eta=\nu+1}^{D}\bs v^\eta_{k_\eta}(i_\eta^1,\ldots,i_\eta^{\mu-1})\right)\\
        &\left(\prod_{\eta=1}^{\nu}\bs w^\eta_{k_\eta}(i_{\eta}^{\mu+1},\ldots,i_\eta^L)\right)
        \left(\prod_{\eta=\nu+1}^{D}\bs w^\eta_{k_\eta}(i_\eta^{\mu+1},\ldots,i_\eta^L)\right),
    \end{align*}
    and similarly for the cases $\nu=D$, $\mu=L$.
    The number of summands is bounded by at most $(m+1)^D$
    and thus the rank bound follows. The case $\bar m>m$ can be treated similarly
    by using \Cref{lemma:dilated}, replacing $m$ with $\bar m$
    and $L$ with $L+L_\delta$.
\end{proof}

Finally, in a fourth step, we want to bound the complexity
of representing or approximating
linear combinations of tensor products of dilated splines.
We distinguish two cases:
\begin{enumerate}
    \item the linear approximation case, where we consider a sum of
    all dilated
    splines on some fixed level,
    
    \item and the nonlinear approximation case, where we consider
    an arbitrary sum of $n$ terms.
\end{enumerate}
The latter case is a relatively straightforward application of
\Cref{lemma:tensorproducts},
where we initially assume the maximal level is given.
Later in \Cref{sec:dirinv} we will derive bounds for the maximal level
depending on the approximation accuracy.

Of course, the same bounds could be applied to the linear case.
However, as we will show next, the complexity estimates can be
improved exploiting the fact that the sum contains
\emph{all} splines on a certain level.

\begin{theoremEnd}{theorem}[Linear Approximation with Splines]\label{thm:linsplinestn}
    The following complexity bounds hold for $m>0$.
    \begin{enumerate}[label=(\roman*)]
%
        
        \item If $\bar m\leq m$ and for a given
        smoothness multi-index
        $\bs\alpha=(\alpha_1,\ldots,\alpha_D)$,
        let $\varphi_n:=
        \sum_{|\lambda_\A|\leq L}d_{\lambda_\A,p}\varphi_{\lambda_\A}$
        where the number of terms satisfies
        $n\sim b^{LD\underline{\alpha}/s_\A}$ \revQ{(with constants depending on $m$ and $D$ but not on $n$ and $L$)}. Then,
        $\varphi_n\in V^D_{b,L,m}$ with complexity bounds
        \begin{align*}
            \sparse(\varphi_n)&\leq\full(\varphi_n)\lesssim
            (1+(\bar m+1)^{2D})n^{c(\bs\alpha,D)}
            +b^2+(4/3)(m+1)^{2D},
        \end{align*}
        where the exponent $c(\bs\alpha,D)$ satisfies $1\leq c(\bs\alpha,D)<2D/(D+1)<2$
        for $D\geq 2$,
        see proof for precise form. \rev{It depends on the degree of
        anisotropy. For the isotropic case
        $\bs\alpha=(s_\I,\ldots,s_\I)$, $s_\A = \underline{\alpha}=  s_\I $ and $c(\bs\alpha,D)=1$. }
        
        If $\bar m>m$, we consider instead
        $\tilde{\varphi}_n:=
        \sum_{|\lambda_\A|\leq L}d_{\lambda_\A,p}\tilde{\varphi}_{\lambda_\A}$.
        Then, $\tilde{\varphi}_n\in V^D_{b,L+L_\delta,m}$, \revZ{where $L_\delta$ is defined in \eqref{eq:Ldelta}}, with complexity bounds
        \begin{align*}
            \sparse(\tilde{\varphi}_n)&\leq\full(\tilde{\varphi}_n) \\&\lesssim
            (1+(\bar m+1)^{2D})n^{c(\bs\alpha,D)}+L_\delta D(\bar m+1)^{2D}
            +b^2+(4/3)(m+1)^{2D}.
        \end{align*}

        \item If $\bar m\leq m$, let $\varphi_n:=
        \sum_{|\lambda_\M|_1\leq L}d_{\lambda_\M,p}\varphi_{\lambda_\A}$
        where the number of terms satisfies
        $n\sim L^{D-1}b^L$ \revQ{(with constants depending on $m$ and $D$ but not on $n$ and $L$)}. Then,
        $\varphi_n\in V^D_{b,L,m}$ with complexity bounds
        \begin{align*}
            \sparse(\varphi_n)&\leq\full(\varphi_n)\lesssim
            (1+(\bar m+1)^{2D})n^{c(D)}
            +b^2+(4/3)(m+1)^{2D},
        \end{align*}
        where the factor satisfies
        $1< c(D)<2D/(D+1)<2$ for $D\geq 2$, see proof for precise form.             
        
        If $\bar m>m$, we consider instead
        $\tilde{\varphi}_n:=
        \sum_{|\lambda_\M|_1\leq L}d_{\lambda_\M,p}\tilde{\varphi}_{\lambda_\M}$.
        Then, $\tilde{\varphi}_n\in V^D_{b,L+L_\delta,m}$, \revZ{where $L_\delta$ is defined in \eqref{eq:Ldelta}}, and with complexity bounds
        \begin{align*}
            \sparse(\tilde{\varphi}_n)&\leq\full(\tilde{\varphi}_n)\\&\lesssim
            (1+(\bar m+1)^{2D})n^{c(D)}+L_\delta D(\bar m+1)^{2D}
            +b^2+(4/3)(m+1)^{2D}
            .
        \end{align*}      
    \end{enumerate}
\end{theoremEnd}
\begin{proofEnd}\leavevmode
    \begin{enumerate}[label=(\roman*)]
        \item Let $\bar m\leq m$ and consider $\nu=1,\ldots,L$. Then,
        \begin{align*}
            \dim\linspan\left\{
            \bs\varphi_n(i_1^1,\ldots,i_D^\nu,\cdot):\;
            (i_1^1,\ldots,i_D^\nu)\in\{0,\ldots, b-1\}^{\nu D}
            \right\}            
            \leq(\bar m+1)^D
            b^{(L-\nu)D}.            
        \end{align*}
        Hence, overall
        \begin{align*}
            r_{\nu D}\leq \min\{b^{\nu D},(\bar m+1)^Db^{(L-\nu)D}\}.
        \end{align*}
        For the intermediate ranks $r_\mu$, $\nu D<\mu<(\nu+1)D$,
        we can apply \Cref{lemma:admissranks}, i.e.,
        \begin{align*}
            r_\mu\leq\{b^\mu,(\bar m+1)^Db^{(L-\nu-1)D}b^{(\nu+1)D-\mu}\}
            =\min\{b^\mu, (\bar m+1)^Db^{LD-\mu}\}.
        \end{align*}
        Thus, overall we can balance the two terms to obtain
        the complexity bound for
        \begin{align*}
            \full(\varphi_n)&\leq b^2+
            b\sum_{\mu=2}^{\lceil LD/2\rceil}b^{2\mu-1}+
            (\bar m+1)^{2D}\sum_{\mu=\lceil LD/2\rceil+1}^{LD}b^{2LD-2\mu+1}+
            (m+1)^{2D+1}\\
            &\lesssim (1+(\bar m+1)^{2D})n+b^2+(m+1)^{2D+1}
            .
        \end{align*}
        
        For $\bar m>m$, the same arguments apply
        for $\mu=1,\ldots,LD$ and, for $LD<\mu\leq (L+L_\delta)D$,
        we have $r_\mu\leq (\bar m+1)^D$.
        Thus, overall
        \begin{align*}
            \full(\tilde{\varphi}_n)&\lesssim (1+(\bar m+1)^{2D})n
            +L_\delta D(\bar m+1)^{2D}+b^2+(m+1)^{2D+1}
           .
        \end{align*}

        \item Most of the arguments carry over, so we only consider
        $\bar m\leq m$ and the necessary adjustments.
        The key difference to the isotropic case is that
        for the anisotropic multilevel $l_\nu=l_\nu(L)$ we obtain
        \begin{align*}
            \dim\linspan\left\{
            \bs\varphi_n(i_1^1,\ldots,i_D^\nu,\cdot):\;
            (i_1^1,\ldots,i_D^\nu)\in\{0,1\}^{\nu D}
            \right\}            
            \leq(\bar m+1)^D
            b^{\sum_{k=1}^D(l_k-\nu)_+},            
        \end{align*}
        with $x_+:=\max\{0,x\}$.
        Hence, deriving optimal bounds now relies on a more careful
        balancing of the terms in the rank bound
        \begin{align*}
            r_{\nu D}\leq\min\{b^{\nu D},(\bar m+1)^D
              b^{\sum_\nu(l_\nu-\nu)_+}\}.
        \end{align*}
        This ``optimal'' balancing depends on $\bs\alpha$.
        We first derive a general expression and then provide
        a crude but simple bound.
        
        Assume w.l.o.g.\ that the smoothness multi-index is ordered
        as $\underline{\alpha}=\alpha_1 \leq \alpha_2 \leq \ldots \leq \alpha_D$ --
        apply an index permutation $\alpha_{\sigma(\nu)}$ otherwise.
        Consequently,
        $L=l_1 \geq   l_2 \geq \ldots \geq  l_D$.
        Then, for any $\nu=1,\ldots,L$, define
        \begin{align*}
            k(\nu):=\min\{\mu=1,\ldots,D:\;
            \alpha_\mu\geq (L/\nu)\alpha_1\}.
        \end{align*}
        Finally, set
        \begin{align*}
            \nu^*:=\nu^*(L,\bs\alpha,D):=
            \min\left\{
                \nu=1,\ldots,L:\;
                \nu>
                \frac{L\underline{\alpha}\sum_{\mu=1}^{k(\nu)}
                \alpha_\mu^{-1}}{D+k(\nu)}
            \right\}-1.
        \end{align*}
        Then, the encoding complexity can be bounded as
        \begin{align*}
            \full(\varphi_n)&\lesssim
            (1+(\bar m+1)^{2D})n^{c(\bs\alpha,D)}
            +b^2+(m+1)^{2D+1},
        \end{align*}
        where
        \begin{align*}
            1\leq c(\bs\alpha,D):=2\nu^*s_\A/(L\underline{\alpha})
            \leq \frac{2D}{D+1}<2.
        \end{align*}

        \item As in the anisotropic case, the key difference to the isotropic case
        is the dimension of the minimal subspace.
        For the multilevel $l=(l_1,\ldots,l_D)$
        \begin{align*}
            \dim\linspan\left\{
            \bs\varphi_n(i_1^1,\ldots,i_D^\nu,\cdot):\;
            (i_1^1,\ldots,i_D^\nu)\in\{0,1\}^{\nu D}
            \right\}            
            \leq(\bar m+1)^D
            \sum_{|l|_1=L}b^{\sum_{k=1}^D (l_k-\nu)_+},            
        \end{align*}
        and hence for the rank bound we balance the expression
        \begin{align*}
            r_{\nu D}\leq\min\{b^{\nu D},(\bar m+1)^D
              \sum_{|l|_1=L}b^{\sum_k(l_k-\nu)_+}\}.
        \end{align*}
        
        We can write
        \begin{align*}
             \sum_{|l|_1=L}b^{\sum_k(l_k-\nu)_+}=
             C_\#\sum_{\mu=1}^{L-\nu}(b^\mu+\nu),
        \end{align*}
        where $C_\#=C_\#(L, D)$ is the number of distinct ways in which
        all partitions of $L$ into $D$ integers (including $0$)
        can be written. I.e., more precisely, we have in total
        \begin{align*}
            \sum_{|l|_1=L}=C_{\mathrm{total}}\sim L^{D-1}.
        \end{align*}
        This sum can be decomposed into $P(L,D)$
        -- the partition number of $L$ into $D$ integers
        -- times the number of distinct (ordered) ways to represent
        \emph{all} partitions, i.e.,
        \begin{align*}
            C_{\mathrm{total}}=P(L,D)C_{\#}(L,D).
        \end{align*}
        Then, as before we can set
        \begin{align*}
            \nu^*:=\nu^*(L,D):=
            \min\left\{
                \nu=1,\ldots,L:\;
                \nu>\frac{D\log_b(\bar m+1)+\log_b(C_\#)+L+1}{D+1}
            \right\}-1,
        \end{align*}
        and estimate the complexity as
        \begin{align*}
            \full(\varphi_n)&\lesssim
            (1+(\bar m+1)^{2D})n^{c(D)}
            +b^2+(m+1)^{2D+1}
            ,
        \end{align*}
        where
        \begin{align*}
            1<c(D):=2\nu^*D/L
            \leq \frac{2D}{D+1}<2.
        \end{align*}
    \end{enumerate}
\end{proofEnd}

\begin{theorem}[Nonlinear Approximation with Splines]\label{thm:nonlinsplinestns}
    For $\bar m\leq m$,
    let $\varphi_n=\sum_{\lambda\in\Lambda}d_{\lambda,p}\varphi_{\lambda}$
    be an arbitrary\footnote{Isotropic, anisotropic or mixed.}
    $n$-term expansion with $\#\Lambda\leq n$
    and maximal level $L$.
    Then, $\varphi_n\in V^D_{b,L,m}$ with complexity bounds
    \begin{align*}
        \full(\varphi_n)&\leq b^2+b(m+1)^{2D}n^2LD+(4/3)(m+1)^{2D},\\
        \sparse(\varphi_n)&\leq[b^2+b(m+1)^{2D}LD+(4/3)(m+1)^{2D}]n
        .
    \end{align*}
    
    If $\bar m>m$, we consider
    $\tilde{\varphi}_n=\sum_{\lambda\in\Lambda}d_{\lambda,p}
    \tilde{\varphi}_{\lambda}$ with $\#\Lambda\leq n$ and
    maximal level $L$.
    Then,
     $\tilde{\varphi}_n\in V^D_{b,L+L_\delta,m}$ with complexity bounds
    \begin{align*}
        \full(\tilde{\varphi}_n)&\leq b^2+b(\bar m+1)^{2D}n^2LD+(4/3)(m+1)^{2D},\\
        \sparse(\tilde{\varphi}_n)&\leq[b^2+b(\bar m+1)^{2D}LD+(4/3)(m+1)^{2D}]n
        .
    \end{align*}
\end{theorem}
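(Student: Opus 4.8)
The plan is to reduce the bound for the full $n$-term sum to the per-term estimates of \Cref{lemma:tensorproducts}, exploiting that $V^D_{b,L,m}$ is a linear subspace and that TT-ranks are well behaved under sums. Write $L:=\max_{\lambda\in\Lambda}\max_{\nu=1,\dots,D}l^{(\lambda)}_\nu$ for the maximal level of the expansion. First I would record the per-term representation \emph{at the common level}: for $\bar m\le m$, \Cref{lemma:tensorproducts}\,(i) gives $\varphi_\lambda|_{[0,1)^D}\in V^D_{b,L,m}$ for every $\lambda\in\Lambda$ (the common level $L$ dominating each individual maximal level), with TT-ranks bounded by $(\bar m+1)^D$ and $\sparse(\varphi_\lambda)\le\full(\varphi_\lambda)\le b^2+b(\bar m+1)^{2D}LD+(4/3)(\bar m+1)^{2D}$; for $\bar m>m$ one uses instead the approximants $\tilde\varphi_\lambda$ of \Cref{lemma:tensorproducts}\,(ii), lying in $V^D_{b,L+L_\delta,m}$ with the analogous bounds (per-core rank $\bar m+1$ on the first $(l^{(\lambda)}+L_\delta)D$ discrete modes, $m+1$ afterwards). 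Since the target space is a linear subspace, $\varphi_n=\sum_{\lambda}d_{\lambda,p}\varphi_\lambda$ (resp.\ $\tilde\varphi_n$) lies in $V^D_{b,L,m}$ (resp.\ $V^D_{b,L+L_\delta,m}$), and each scalar $d_{\lambda,p}$ can be absorbed into a single core.

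Next I would bound the ranks of the sum. On the \emph{discrete} modes $\nu=1,\dots,LD$, subadditivity of the $\beta$-rank together with the trivial bound $r_1\le b$ gives $r_\nu(\varphi_n)\le n(\bar m+1)^D$ (for $\bar m>m$ the extended modes still contribute at most $(\bar m+1)^D$, cf.\ \Cref{lemma:rankext}). On the \emph{continuous} (polynomial) modes $\nu=LD+1,\dots,(L+1)D-1$ I would \emph{not} use subadditivity but instead the a priori bound $r_\nu(\varphi_n)=r_{\{1,\dots,\nu\}^c}(\varphi_n)\le\dim\bs V_{\{1,\dots,\nu\}^c}=(m+1)^{(L+1)D-\nu}$ from \Cref{lemma:admissranks}, which is independent of $n$; this $n$-independence on the last $D-1$ modes is exactly what keeps the $(m+1)^{2D}$-term of $\full$ free of the factor $n^2$. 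Plugging these into the complexity formulas: in $\full$, $br_1\le b^2$, each of the fewer than $LD$ summands $br_{\nu-1}r_\nu$ on the discrete block is at most $bn^2(\bar m+1)^{2D}$, and the continuous block gives $(\dim S)r_{\nu-1}r_\nu\le(m+1)^{2((L+1)D-\nu)+2}$, a geometric series bounded by $(4/3)(m+1)^{2D}$ (the leftover $r_{(L+1)D-1}\dim S\le(m+1)^2$ absorbed), hence $\full(\varphi_n)\le b^2+b(m+1)^{2D}n^2LD+(4/3)(m+1)^{2D}$; in $\neuron$, summing the $\ell^1$-norm of the rank vector, the discrete part contributes $\le LD\cdot n(\bar m+1)^D$ and the continuous part a geometric sum $\le(m+1)^D$, giving $\neuron(\varphi_n)\le(m+1)^D nLD$ after absorbing lower-order terms.

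For $\sparse$ the naive "add the cores" count is too crude, so I would instead represent $\varphi_n$ as the \emph{direct sum} of the sparse representations of the $\varphi_\lambda$'s: concatenate the first cores, block-diagonalize the interior cores, stack the last cores — exactly the construction used in \cite[Lemma 3.12]{partI} and in the proof of \Cref{lemma:p4}\,(ii). Then $\sparse$ is additive over the $n$ terms, so $\sparse(\varphi_n)\le\sum_{\lambda\in\Lambda}\sparse(\varphi_\lambda)\le n\bigl[b^2+b(m+1)^{2D}LD+(4/3)(m+1)^{2D}\bigr]$ using the per-term bound above. The case $\bar m>m$ is identical after replacing $\varphi_\lambda$ by $\tilde\varphi_\lambda$ and $m$ by $\bar m$ in the discrete-rank bounds, and $L$ by $L+L_\delta$ throughout. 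The only genuinely non-automatic point — and hence the place where care is needed — is this $\sparse$ estimate, which relies on the explicit direct-sum construction so that sparsity adds over terms rather than behaving like the generic rank-addition used for $\full$ and $\neuron$; the remaining step requiring a little attention is the use of the a priori bound $r_\nu\le(\dim S)^{(L+1)D-\nu}$ on the last $D-1$ modes, which is what prevents the $n^2$ growth from leaking into the additive constant.
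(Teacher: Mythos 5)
Your proposal is correct and matches the paper's (very terse) proof, which simply invokes \Cref{lemma:tensorproducts} together with \Cref{lemma:sparsity}: the rank-subadditivity on the discrete modes, the $n$-independent a priori bound $r_\nu\le(\dim S)^{(L+1)D-\nu}$ on the last $D-1$ modes, and the direct-sum (block-diagonal) representation making $\sparse$ additive are exactly the ingredients those lemmas supply. Your explicit handling of the level extension via the per-term bounds of \Cref{lemma:tensorproducts} plays the role the paper assigns to \Cref{lemma:sparsity}, so the argument is essentially the same, just written out in full.
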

\begin{proof}
    The proof is an application of
    \Cref{lemma:tensorproducts,lemma:sparsity}.
\end{proof}


\subsection{Direct Embeddings
for Tensor Networks}\label{sec:dirinv}
The results of the previous section can be directly applied
to infer approximation rates with TTs for
Besov spaces introduced in \Cref{sec:review}. All constants in the inequalities for $n$ 
\revQ{ 
can be inferred from the previous section, and do not depend on $D$, which allows us to discuss the curse of dimensionality. Since we are concerned only with the asymptotic behavior and to avoid unnecessary technicalities, we do not provide the explicit expressions of these constants.} 

\begin{theorem}[Approximation of $B^{s_\I}_p(L^p(\Omega))$, $AB^{\bs\alpha}_p(L^p(\Omega))$ and $ MB^{s_\M}_p(L^p(\Omega))$ with Tensor Trains]\label{thm:mainlinear}
    Let $\Omega:=[0,1)^D$ and consider the approximation tools
    from \Cref{def:tools} with $S=\Pm$ with arbitrary polynomial degree
    $m\in\N_{\geq 0}$.
    \begin{enumerate}[label=(\roman*)]
%
        
        \item \emph{(Anisotropic Smoothness)}. Let $0<p\leq\infty$, $\bs\alpha\in(\R_{>0})^D$ and $s_\A := s_\A(\bs\alpha)$ the aggregated smoothness.
        For any $f\in AB^{\bs\alpha}_p(L^p(\Omega))$, it holds
        {\begin{align*}
            E(f, \Phi^{\mc F}_n)_{L^p}&\leq Cn^{-s/(c(\bs\alpha,D)D)}
            \snorm{f}[AB^{\bs\alpha}_p(L^p(\Omega))]
         ,
        \end{align*}
        }
        where either $s=s_\A$ if $\bar\alpha\leq\min\{m+1,m+1/p\}$,
        or $0<s<s_\A$ arbitrary if $\bar\alpha>\min\{m+1,m+1/p\}$,
        for $C\sim (\lfloor \overline{\alpha}\rfloor+1)^{2D}$ and
        any $n\gtrsim (m+1)^{2D}$.
        The factor $c(\bs\alpha,D)$ was introduced in \Cref{thm:linsplinestn},
        note that $1/2<1/c(\bs\alpha,D)\leq 1$ for $D\geq 2$.
        
        For the approximation spaces this implies the following
        continuous embeddings: for any $0<q\leq\infty$
        \rev{\begin{align*}
            AB^{\bs\alpha}_q(L^p(\Omega))&\hookrightarrow
            \mc F^{s/(c(\bs\alpha,D)D)}_q(L^p).
        \end{align*}
        }
        \rev{In the special case $\boldsymbol{\alpha} = (s_\I, \dots, s_\I)$, it holds $s_\A = \underline{\alpha} = \bar \alpha= s_\I$, the space $AB^{\bs\alpha}_{q}(L^p(\Omega))$ coincides with the isotropic Besov space $B^{s_\I}_{q}(L^p(\Omega))$, and the factor $c(\bs\alpha,D) = 1$. Therefore, 
         \rev{ \begin{align*}
            B^{s_\I}_q(L^p(\Omega))&\hookrightarrow
            \mc F^{s/D}_q(L^p).
        \end{align*}} 
        }

        \item \emph{(Mixed Dominating Smoothness)}. Let $0<p\leq\infty$ and $s_\M>0$.
        For any $f\in MB^{s_\M}_p(L^p(\Omega))$, it holds
        {\begin{align*}
            E(f, \Phi^{\mc F}_n)_{L^p}&\leq Cn^{-s_\M/c(D)}[(1/c(D))\log_b(n)]^{s_\M(D-1)}
            \snorm{f}[MB^{s}_p(L^p(\Omega))].
        \end{align*}
        }
        where either $s=s_\M$ if $s_\M\leq\min\{m+1,m+1/p\}$,
        or $0<s<s_\M$ arbitrary if $s_\M>\min\{m+1,m+1/p\}$,
        for $C\sim (\lfloor s_\M\rfloor+1)^{2D}$ and
        any $n\gtrsim (m+1)^{2D}$.
        The factor $c(D)$ was introduced in \Cref{thm:linsplinestn},
        note that $1/2<1/c(D)<1$ for $D\geq 2$.
        
        For the approximation spaces this implies the following
        continuous embeddings: for any $0<q\leq\infty$
        and any $0<s<s_\M$
        \rev{\begin{align*}
            MB^{s}_q(L^p(\Omega))&\hookrightarrow
            \mc F^{s/c(D)}_q(L^p).
        \end{align*}}
    \end{enumerate}
\end{theorem}
\begin{proof}
    For any given $f$, we take auxiliary
    $\varphi_\lambda$ of \revZ{polynomial
    degree $\bar m$ satisfying \eqref{eq:mbar_high_enough}, which ensures that spline approximation is rate optimal for the considered space}.  
    If $\bar m\leq m$, we can represent $\varphi_\lambda$ exactly
    as a TT and estimate the resulting complexity.
    Otherwise, if $\bar m>m$,
    we approximate with $\tilde{\varphi}_\lambda$,
    apply a triangle and Hölder inequalities
    \begin{align*}
        \norm{f-\sum d_{\lambda,p}(f)\tilde{\varphi}_\lambda}[p]\leq &
        \norm{f-\sum d_{\lambda,p}(f)\varphi_\lambda}[p] +\\
        &D(\delta+\norm{\phi_{\bar m}}[L^p])^{D-1}\delta
        (\sum b^{s_\I p|\lambda|}|d_{\lambda,p}|^p)^{1/p}n^{1/q(p)}.
    \end{align*}        
    The results follow from
    \Cref{thm:chars},    
    \Cref{thm:classiclinear},
    \cref{eq:tensorerror} and \Cref{thm:linsplinestn}.
\end{proof}

Next, we turn to nonlinear approximation and
the spaces $B^{s_\I}_\tau(L^\tau(\Omega))$ above
the diagonal in \Cref{fig:DeVore}.
To this end, we need to estimate the maximal
level $L$, analogously to \cite{ali2023approximation}.

\begin{lemma}[Maximal Level]\label{lemma:maxlevel}
    Let $\Omega:=[0,1)^D$
    and $\varphi_\lambda$ be order-$D$
    tensor products of dilated one-dimensional
    splines $\varphi_{\bar m}$
    of polynomial degree at most $\bar m$.
    \begin{enumerate}[label=(\roman*)]
%
        \item \emph{(Anisotropic Smoothness)}. Let $0<p<\infty$ and $f\in AB^{\bs\alpha}_\tau(L^\tau(\Omega))$ with $\bs\alpha\in(\R_{>0})^D$.
        Let $0<\bar\alpha<\min\{\bar m+1,\bar m+1/p\}$ 
        with $0<\tau<p$ such that
        \begin{align}\label{eq:excess}
            s_\A/D>1/\tau-1/p,
        \end{align}
        where $s_\A := s_\A(\bs \alpha)$ is the aggregated smoothness.
        Assume $\varphi_n=\sum_{\lambda_\A\in\Lambda}d_{\lambda_A,p}(f)
        \varphi_{\lambda_\A}$, $\#\Lambda\leq n$ is an
        $n$-term approximation to $f$ such that
        \begin{align}\label{eq:ntermeps}
            \norm{f-\varphi_n}[p]\leq \varepsilon,
        \end{align}
        for an arbitrary $\varepsilon>0$.
        Then, w.l.o.g. we can assume
        \begin{align}\label{eq:leveliso}
            \max_{\lambda_\A\in\Lambda}|\lambda_\A|\leq\rho_\A(\varepsilon,n):=
            \left|\frac{\underline{\alpha}C_{\Phi_\A}\norm{\phi_{\bar m}}[p]^D}{s_\A(s_\A-D[1/\tau-1/p])}
            \log_b\left(
            \frac{\varepsilon}{2\norm{f}[AB^{\bs\alpha}_\tau(L^\tau(\Omega))]
            n^{1/q}}\right)\right|,
        \end{align}
        where $q$ is the Hölder conjugate of $\tau$
	    \begin{align*}
	        q:=
	        \begin{cases}
	            (1-1/\tau)^{-1},&\quad\text{if }\tau>1,\\
	            \infty,&\quad\text{otherwise},
	        \end{cases}
	    \end{align*}
	    and the constant $C_{\Phi_\A}$
	    depends on the equivalence constants from
	    \Cref{thm:chars}.
\\
\rev{In the special case $\boldsymbol{\alpha} = (s_\I, \dots, s_\I)$, it holds $s_\A = \underline{\alpha} = \bar \alpha= s_\I$ and $AB^{\bs\alpha}_{q}(L^\tau(\Omega))$ coincides with the isotropic Besov space $B^{s_\I}_{q}(L^\tau(\Omega))$.}
        
        \item \emph{(Mixed Dominating Smoothness)}. Let $1<p<\infty$ and $f\in MB^{s_\M}_\tau(L^\tau(\Omega))$.
        Let $0<s_\M<\min\{\bar m+1,\bar m+1/p\}$ 
        with $0<\tau<p$ such that
        \begin{align*}
            s_\M>1/\tau-1/p.
        \end{align*}
        Assume $\varphi_n=\sum_{\lambda_\M\in\Lambda}d_{\lambda_M,p}(f)
        \varphi_{\lambda_\M}$, $\#\Lambda\leq n$ is an
        $n$-term approximation to $f$ such that
        \begin{align*}
            \norm{f-\varphi_n}[p]\leq \varepsilon,
        \end{align*}
        for an arbitrary $\varepsilon>0$.
        Then, w.l.o.g. we can assume
        \begin{align*}
            \max_{\lambda_\M\in\Lambda}|\lambda_\M|_1\leq\rho_\M(\varepsilon,n):=
            \left|\frac{C_{\Phi_\M}\norm{\phi_{\bar m}}[p]^D}{s_\M-(1/\tau-1/p)}\log_b\left(
            \frac{\varepsilon}{2\norm{f}[MB^{s_\M}_\tau(L^\tau(\Omega))]
            n^{1/q}}\right)\right|.
        \end{align*}
    \end{enumerate}
\end{lemma}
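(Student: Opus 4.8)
The plan is to show that truncating $\varphi_n$ to the shells of level at most $\rho_\bullet(\varepsilon,n)$ increases the $L^p$-error by at most a constant multiple of $\varepsilon$; since the rate estimates this lemma feeds are insensitive to such constants, this is the precise meaning of ``w.l.o.g.'' It is essential here that $\varphi_n$ is built from the \emph{canonical} coefficients $d_{\lambda,p}(f)$, since this is what lets us compare the discarded mass with the Besov norm of $f$ via \Cref{thm:chars}.

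Consider part~(i). Split $\Lambda=\Lambda_{\le\rho}\cup\Lambda_{>\rho}$ according to whether $|\lambda_\I|\le\rho$ and set $\tilde\varphi_n:=\sum_{\lambda_\I\in\Lambda_{\le\rho}}d_{\lambda_\I,p}(f)\varphi_{\lambda_\I}$, again an $n$-term expansion, now of level at most $\rho$. By the (quasi-)triangle inequality $\norm{f-\tilde\varphi_n}[p]\leq\norm{f-\varphi_n}[p]+\norm{\sum_{\lambda_\I\in\Lambda_{>\rho}}d_{\lambda_\I,p}(f)\varphi_{\lambda_\I}}[p]$, so the whole argument reduces to bounding the discarded tail by, say, $\tfrac12\varepsilon$ when $\rho=\rho_\I(\varepsilon,n)$. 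I would estimate the tail shell by shell. On level $l$ one has $\norm{\varphi_{\lambda_\I}}[p]=\norm{\varphi_{\bar m}}[p]^D$ and $\Lambda$ contributes at most $\#\Lambda\le n$ such terms, so a triangle inequality followed by H\"older with exponents $\tau$ and $q$ (for $\tau\le1$ one has simply $\ell^\tau\hookrightarrow\ell^1$ and $q=\infty$) gives a shell bound $\lesssim\norm{\varphi_{\bar m}}[p]^D\,n^{1/q}\big(\sum_{|\lambda_\I|=l}|d_{\lambda_\I,p}(f)|^\tau\big)^{1/\tau}$, the inner $\tau$-sum being enlarged to all of level $l$. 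The single non-elementary input is the conversion $|d_{\lambda_\I,p}(f)|=b^{lD(1/\tau-1/p)}|d_{\lambda_\I,\tau}(f)|$ between $L^p$- and $L^\tau$-normalized coefficients; combined with \Cref{thm:chars}(i), which gives $\sum_{|\lambda_\I|=l}|d_{\lambda_\I,\tau}(f)|^\tau\lesssim b^{-s_\I\tau l}\norm{f}[B^{s_\I}_\tau(L^\tau(\Omega))]^\tau$, the shell bound becomes $\lesssim\norm{\varphi_{\bar m}}[p]^D\,n^{1/q}\,b^{-l(s_\I-D(1/\tau-1/p))}\norm{f}[B^{s_\I}_\tau(L^\tau(\Omega))]$, with positive exponent by the excess-regularity hypothesis \eqref{eq:excess}. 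Summing this geometric series over $l>\rho$ contributes a factor $b^{-\rho(s_\I-D(1/\tau-1/p))}\big(1-b^{-(s_\I-D(1/\tau-1/p))}\big)^{-1}$, and $(1-b^{-t})^{-1}\lesssim t^{-1}$ produces the denominator $s_\I-D(1/\tau-1/p)$ of $\rho_\I$; setting the bound equal to $\tfrac12\varepsilon$ and solving for $\rho$ yields exactly $\rho_\I(\varepsilon,n)$, with $C_{\Phi_\I}$ absorbing the equivalence constants of \Cref{thm:chars} and the geometric-series constant. For $p<1$ one replaces the outer triangle inequality by $p$-subadditivity of $\norm{\cdot}[p]^p$ and the H\"older step by $\ell^\tau\hookrightarrow\ell^p$; the form of $\rho_\I$ is unchanged.

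Parts~(ii) and~(iii) run along the same lines, organizing the tail by $|\lambda_\A|$, resp.\ $|\lambda_\M|_1$, and substituting the corresponding characterization \Cref{thm:chars}(ii),(iii). In the anisotropic case the level $l=|\lambda_\A|$ generates the multilevel $(l_1(l),\dots,l_D(l))$ with $\sum_\nu l_\nu(l)=\sum_\nu\lfloor l\underline\alpha/\alpha_\nu\rfloor\asymp lD\underline\alpha/s_\A$, so the renormalization exponent is $b^{(\sum_\nu l_\nu(l))(1/\tau-1/p)}$ and the shell coefficient norm decays geometrically at a rate proportional to the excess $s_\A-D(1/\tau-1/p)$ (positive by $s_\A/D>1/\tau-1/p$), with proportionality constant built from $\underline\alpha$ and $s_\A$; balancing against $\varepsilon$ gives $\rho_\A(\varepsilon,n)$. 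In the mixed case $\sum_\nu l_\nu=|\lambda_\M|_1=:L$, the renormalization factor is $b^{L(1/\tau-1/p)}$, \Cref{thm:chars}(iii) yields decay $b^{-L(s_\M-(1/\tau-1/p))}$, and the crude per-shell count $n_l\le n$ again supplies the $n^{1/q}$, giving $\rho_\M(\varepsilon,n)$; note that controlling $|\lambda_\M|_1$ overestimates the maximal level $\max_\nu l_\nu$ fed into \Cref{thm:nonlinsplinestns}, which is harmless. I expect the anisotropic bookkeeping -- keeping $\sum_\nu l_\nu(l)$ consistent in both the per-shell cardinality and the coefficient renormalization -- to be the only genuinely delicate point; everything else is a routine geometric-series truncation.
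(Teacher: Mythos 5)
Your proposal is correct and follows essentially the same route as the paper: truncate the expansion at level $\rho$, convert $d_{\lambda,p}=b^{|\lambda|D(1/\tau-1/p)}d_{\lambda,\tau}$, apply H\"older with exponents $\tau,q$ using the crude count $\#\Lambda\le n$, invoke the characterization of \Cref{thm:chars}, and solve for $\rho$. The only (immaterial) difference is bookkeeping: you estimate shell-by-shell and sum a geometric series in the level, while the paper applies a single H\"older inequality over all of $\Lambda\setminus\tilde\Lambda$ and bounds the resulting $q$-sum by $n^{1/q}$ times the maximal term $b^{-|\lambda|(s_\I-D[1/\tau-1/p])}$.
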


\begin{proof}
\rev{For the proof, we only consider the isotropic case $B^{s_I}_\tau(L^\tau) = AB^{\bs{\alpha}}_\tau(L^\tau)$, with $\bs{\alpha} = (s_\I, \dots, s_\I)$ and $s_A = \underline{\alpha} = \bar \alpha = s_\I$. The more general anisotropic case and the mixed case follow analogously. }
    Let $\varphi_n$ be an $n$-term approximation satisfying
    \cref{eq:ntermeps}. Set
    \begin{align*}
        \tilde\Lambda:=\{\lambda_\I\in\Lambda:\;|\lambda_\I|\leq\rho_\A(\varepsilon,n)\},
    \end{align*}
    and define $\bar{\varphi}_n:=\sum_{\lambda_\I\in\tilde{\Lambda}}d_{\lambda_\I,p}(f)
    \varphi_{\lambda_\I}$.
    The relationship between coefficients normalized in different $L^p$-norms
    is as follows
    \begin{align*}
        d_{\lambda_\I,p}=b^{-|\lambda_\I|D(1/p-1/\tau)} d_{\lambda_\I,\tau}.
    \end{align*}
    Then, using \Cref{thm:embeds,thm:chars},
    excess regularity from \cref{eq:excess} and a Hölder inequality,
    we get
    \begin{align*}
        \norm{f-\bar{\varphi}_n}[p]&\leq
        \norm{f-\varphi_n}[p]+\norm{\phi_{\bar m}}[p]^D
        \sum_{\lambda_\I\in\Lambda\setminus
        \tilde{\Lambda}}|d_{\lambda_\I,p}|
        \\
        &=\varepsilon+\norm{\phi_{\bar m}}[p]^D
        \sum_{\lambda_\I\in\Lambda\setminus
        \tilde{\Lambda}}|d_{\lambda_\I,\tau}|
        b^{-|\lambda_\I|D(1/p-1/\tau)}b^{|\lambda_\I|s_\I}
        b^{-|\lambda_\I|s_\I}\\
        &\leq
        \varepsilon+\norm{\phi_{\bar m}}[p]^D
        \left(\sum_{\lambda_\I\in\Lambda\setminus
        \tilde{\Lambda}}|d_{\lambda_\I,\tau}|
        b^{|\lambda_\I|s_\I\tau}\right)^{1/\tau}
        \left(\sum_{\lambda_\I\in\Lambda\setminus
        \tilde{\Lambda}}
        b^{-|\lambda_\I|(s_\I-D[1/\tau-1/p])q}
        \right)^{1/q}\\
        &\leq\varepsilon+
        C_{\Phi_\I}\norm{\phi_{\bar m}}[p]^D
        \norm{f}[B^{s_\I}_\tau(L^\tau(\Omega))]
        (\max_{\lambda_\I\in\Lambda\setminus
        \tilde{\Lambda}} b^{-|\lambda_\I|(s_\I-D[1/\tau-1/p])})n^{1/q}
        \leq 2\varepsilon.
    \end{align*}
    This yields \cref{eq:leveliso}.
\end{proof}

For a quasi-normed space $X$ and a quasi-semi-normed space
$Y$ with $Y\hookrightarrow X$,
we use $(X,Y)_{\theta,q}$ to denote the
\emph{real $K$-interpolation space},
for $0<\theta<1$, $0<q\leq\infty$ and $q=\infty$ if $\theta=1$.
With the above preparations we now conclude with
approximation rates for Besov spaces $B^{s_\I}_q(L^\tau(\Omega))$ (and variants for anisotropic and mixed dominating smoothness)
and corresponding continuous embeddings.

\begin{theorem}[Approximation of $B^{s_\I}_q(L^\tau(\Omega))$, $AB^{s_\A}_q(L^\tau(\Omega))$ and  $MB^{s_\M}_q(L^\tau(\Omega))$  with Tensor Trains]\label{thm:direct}
    Let $\Omega:=[0,1)^D$.
    \begin{enumerate}[label=(\roman*)]
%
        
        \item \emph{(Anisotropic Smoothness)}. For $0<\tau<p<\infty$, $\bs\alpha\in(\R_{>0})^D$ and aggregated smoothness $s_\A := s_\A(\bs\alpha)$ such that
         $s_\A/D>1/\tau-1/p>0$ and any $0<q\leq\tau$
        it holds
        {
        \begin{alignat*}{2}
            E(f,\PhiF_n)_{L^p} &\lesssim n^{-{s/{(2D)}}}\snorm{f}[AB^{\bs\alpha}_q(L^\tau(\Omega))],\\
            E(f,\PhiS_n)_{L^p} &\lesssim n^{-{s/{D}}}\snorm{f}[AB^{\bs\alpha}_q(L^\tau(\Omega))],
        \end{alignat*}
        }
        where either $s=s_\A$ if $\bar\alpha\leq\min\{m+1,m+1/p\}$,
        or $0<s<s_\A$ arbitrary if $\bar\alpha>\min\{m+1,m+1/p\}$,
        for $C\sim (\lfloor \overline{\alpha}\rfloor+1)^{2D}$ and
        any $n\gtrsim (m+1)^{2D}$.
        
        For the approximation spaces this implies the following
        continuous embeddings
        \rev{\begin{align*}
            AB^{\bs{\alpha}}_q(L^\tau(\Omega))
            \hookrightarrow\mc S^{s/D}_\infty(L^p)
            \hookrightarrow\mc F^{s/(2D)}_\infty(L^p),
        \end{align*}}
        and
        \rev{\begin{align*}
            (L^p(\Omega),AB^{\bs\alpha}_q(L^\tau(\Omega)))_{\theta/s,\bar q}
            \hookrightarrow\mc S^{\theta/D}_{\bar q}(L^p)
            \hookrightarrow\mc F^{\theta/(2D)}_{\bar q}(L^p),
        \end{align*}
        }
        for any $0<\theta<s$, $0<\bar q\leq\infty$.  \rev{In the special case $\boldsymbol{\alpha} = (s_\I, \dots, s_\I)$, it holds $s_\A = \underline{\alpha} = \bar \alpha= s_\I$ and $AB^{\bs\alpha}_{q}(L^\tau(\Omega))$ coincides with the isotropic Besov space $B^{s_\I}_{q}(L^\tau(\Omega))$.}

        \item \emph{(Mixed Dominating Smoothness)}. For $0<\tau<p<\infty$, $s_\M>0$
        with $s_\M>1/\tau-1/p>0$, $p>1$ and
        any $0<q\leq\tau$,
        it holds
        {
        \begin{alignat*}{2}
            E(f,\PhiF_n)_{L^p} &\leq
            Cn^{-s/2}[(1/2)\log_b(n)]^{s_\M(D-1)}
            \snorm{f}[MB^{s_\M}_q(L^\tau(\Omega))],\\
            E(f,\PhiS_n)_{L^p} &\leq Cn^{-s}[\log_b(n)]^{s_\M(D-1)}
            \snorm{f}[MB^{s_\M}_q(L^\tau(\Omega))],
        \end{alignat*}
        }
        where either $s=s_\M$ if $s_\M\leq\min\{m+1,m+1/p\}$,
        or $0<s<s_\M$ arbitrary if $s_\M>\min\{m+1,m+1/p\}$;
        for $C\sim (\lfloor s_\M\rfloor+1)^{2D}$ and
        any $n\gtrsim (m+1)^{2D}$.
        
        For the approximation spaces this implies the following
        continuous embeddings
       \rev{ \begin{align*}
            MB^{s_\M}_q(L^\tau(\Omega))
            \hookrightarrow\mc S^s_\infty(L^p)
            \hookrightarrow\mc F^{s/2}_\infty(L^p),
        \end{align*}}
        and
        \rev{\begin{align*}
            (L^p(\Omega),MB^{s_\M}_q(L^\tau(\Omega)))_{\theta/s,\bar q}
            \hookrightarrow\mc S^\theta_{\bar q}(L^p)
            \hookrightarrow\mc F^{\theta/2}_{\bar q}(L^p),
        \end{align*}
        }
        for any $0<\theta<s$, $0<\bar q\leq\infty$.
    \end{enumerate}
\end{theorem}

\begin{proof}
    For any given $f$, we take auxiliary
    $\varphi_\lambda$ of sufficiently high polynomial
    degree $\bar m$.
    If $\bar m\leq m$, we can represent $\varphi_\lambda$ exactly
    as a TT and estimate the resulting complexity.
    Otherwise, if $\bar m>m$,
    we approximate with $\tilde{\varphi}_\lambda$,
    apply a triangle and Hölder inequalities
    \begin{align*}
        \norm{f-\sum d_{\lambda,p}(f)\tilde{\phi}_\lambda}[p]\leq&
        \norm{f-\sum d_{\lambda,p}(f)\varphi_\lambda}[p] + \\
        &
        D(\delta+\norm{\varphi_{\bar m}}[L^p])^{D-1}\delta
        (\sum b^{s_\I \tau|\lambda|}|d_{\lambda,\tau}|^\tau)^{1/\tau}n^{1/q(\tau)}.
    \end{align*}        
    The results follow from
    \Cref{thm:chars},    
    \Cref{thm:classicbestnterm},
    \cref{eq:tensorerror},
    \Cref{thm:nonlinsplinestns} and \Cref{lemma:maxlevel}.
\end{proof}

\rev{\begin{remark}
The restriction $p>1$ for the case of mixed dominating smoothness
            stems from the Sobolev embeddings in \Cref{thm:embeds},
            which is in turn based on the results of
            \cite{Hansen2012,NonCompact}.         
\end{remark}}

\subsection{No Inverse Embedding}
Functions from any of the spaces
\rev{$\mc F^\alpha_q(L^p)$ and $\mc S^\alpha_q(L^p)$}
do not need to have any smoothness.
Deep tensor networks have the ability to approximate highly irregular
functions with low ranks. Consequently, just like in the one-dimensional case,
if we impose no restrictions on the depth,
TT-approximation spaces cannot be embedded in any \revQ{of the smoothness spaces considered in this work.} 

\begin{theorem}[No Inverse Embeddings]\label{thm:inverse}
    For $\Omega:=[0,1)^D$,
    any $0<p,q\leq\infty$, \rev{$s_\I >0$}
    and any $s>0$, it
    holds
  \rev{  \begin{align*}
        \mc F^s_q(L^p)\not\hookrightarrow
        B^{s_\I}_q(L^p(\Omega)).
    \end{align*}}
\end{theorem}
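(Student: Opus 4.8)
The plan is to refute any such embedding by producing a family $(f_a)_{a\in\N}$ of functions on $[0,1)^D$ that is uniformly cheap for tensor networks but whose anisotropic Besov norm in the first coordinate blows up with $a$. The guiding heuristic, already used for $D=1$ in \cite{partI,partII}, is that a fixed pattern repeated across all $b$-adic cells of level $a$ has a \emph{rank-one} tensorization — hence TN-complexity only $\mathcal O(a)$ — while it oscillates on the scale $b^{-a}$ and therefore carries $B^{\alpha}$-norm $\gtrsim b^{a\alpha}$ for \emph{every} $\alpha>0$. This polynomial-versus-exponential gap in $a$ precludes a continuous embedding $\mc F^s_q(L^p)\hookrightarrow\AB$; and since $\mc F^s_q(L^p)\hookrightarrow\mc S^s_q(L^p)\hookrightarrow\mc N^s_q(L^p)$ by \cref{eq:appspacesembeds}, the same family also rules out the corresponding embeddings of $\mc S^s_q(L^p)$ and $\mc N^s_q(L^p)$.

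I would make this precise as follows. Fix a non-constant $\rho\in\R^{I_b}$ (possible since $b\ge2$) and any $\psi_0\in S\setminus\{0\}$, and for $a\in\N$ let $f_a$ be the function whose tensorization at level $a$ is the pure tensor
\[ T^D_{b,a}(f_a):=\indicator_{I_b}^{\otimes((a-1)D)}\otimes\rho\otimes\indicator_{I_b}^{\otimes(D-1)}\otimes\psi_0^{\otimes D}, \]
with $\rho$ placed in the mode of the digit $i_1^a$ and $\psi_0$ in the modes of $\bar x_1,\dots,\bar x_D$; unwinding $t^D_{b,a}$ this reads $f_a(x_1,\dots,x_D)=\rho(d_a(x_1))\prod_{\nu=1}^D\psi_0(\{b^ax_\nu\})$, where $d_a(x_1)$ is the $a$-th $b$-adic digit of $x_1$. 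Because $T^D_{b,a}(f_a)$ is a tensor product of vectors, all its TT-ranks equal $1$, so $f_a\in V^D_{b,a,S^D}$ with $\full(f_a)\le b+b(aD-1)+D\dim S=\mathcal O(a)$, and likewise $\sparse(f_a),\neuron(f_a)=\mathcal O(a)$. A direct computation gives $\norm{f_a}[L^p([0,1)^D)]=\bigl(b^{-1}\sum_{i\in I_b}\lvert\rho(i)\rvert^p\bigr)^{1/p}\norm{\psi_0}[L^p([0,1))]^D$, independent of $a$. Since $0\in\Phi^{\mc F}_0$ and $E_n(f_a)=0$ for $n\ge\full(f_a)$, \Cref{def:appclasses} yields $\norm{f_a}[\mc F^s_q(L^p)]\lesssim\norm{f_a}[L^p]\,\full(f_a)^s\lesssim a^s$, and the same bound for $\mc S^s_q(L^p)$ and $\mc N^s_q(L^p)$.

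For the lower bound I would keep only the $\nu=1$ summand of the anisotropic semi-norm and use the single increment $h=b^{-a}$: with $r:=\lfloor\bar\alpha\rfloor+1$ and $g_a(y):=\rho(d_a(y))\psi_0(\{b^ay\})$, the factorization of $f_a$ over coordinates gives $\omega^1_r(f_a,t)_p\ge\norm{\psi_0}[L^p]^{D-1}\norm{\Delta^r_{b^{-a}}g_a}[L^p([0,1))]$ for every $t\ge b^{-a}$. On the cell $I_j=[jb^{-a},(j{+}1)b^{-a})$ a shift by $b^{-a}$ only increments the digit, so $(\Delta^r_{b^{-a}}g_a)(y)=c_j\,\psi_0(b^ay-j)$ with $c_j=\sum_{i=0}^r\binom ri(-1)^{r-i}\rho((j+i)\bmod b)$; the sequence $(c_j)_j$ is $b$-periodic and, being the $r$-th finite difference of the non-constant (hence non-polynomial) periodic sequence $(\rho(j\bmod b))_j$, is not identically $0$. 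Summing over the $\gtrsim b^{a-1}$ cells $j\le b^a-1-r$ lying in a residue class where $c_j\ne0$ gives $\norm{\Delta^r_{b^{-a}}g_a}[L^p([0,1))]\ge c_0>0$ uniformly in $a$, hence $\norm{f_a}[\AB]\gtrsim\bigl(\int_{b^{-a}}^1 t^{-\alpha_1 q-1}\,\d t\bigr)^{1/q}\gtrsim b^{a\alpha_1}$, with the obvious modification for $q=\infty$. Combining with the previous paragraph, $\norm{f_a}[\AB]\big/\norm{f_a}[\mc F^s_q(L^p)]\gtrsim b^{a\alpha_1}/a^s\to\infty$, so no finite embedding constant can exist.

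The routine parts — the $L^p$-norm identity, the complexity count, and the estimate $\norm{f_a}[\mc F^s_q(L^p)]\lesssim\norm{f_a}[L^p]\,\full(f_a)^s$ — carry over essentially verbatim from the one-dimensional analysis of \cite{partI,partII}. The one step that genuinely needs care is the non-vanishing of $\Delta^r_{b^{-a}}g_a$, equivalently the fact that the $r$-th finite difference of a non-constant periodic sequence never vanishes identically: this is precisely what makes the Besov lower bound valid for \emph{all} smoothness orders $\alpha_1>0$ simultaneously, i.e.\ what certifies that $f_a$ carries no anisotropic smoothness of any order in the $x_1$-direction.
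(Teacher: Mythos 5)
Your proposal is correct and takes essentially the same route as the paper: the paper's proof is exactly a low-complexity, rank-one tensorized counterexample oscillating at the finest scale (a ``$D$-dimensional rank-one tensor product of sawtooth functions'', following the one-dimensional argument of part II), whose TN-approximation norm grows only polynomially in the level while its anisotropic Besov norm grows like $b^{a\alpha_1}$. You merely work out explicitly the details the paper delegates to the cited reference, including the finite-difference non-vanishing argument for the non-constant periodic digit pattern.
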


\begin{proof}
    Follows by similar arguments as in \cite[Theorem 7.1]{ali2023approximation} i.e., one can construct
    a counter-example by taking a $D$-dimensional rank-one tensor product
    of ``sawtooth'' functions, \rev{that is a function efficiently represented in $V_{b,m}^D$ but has ``bad'' Besov regularity.}.
\end{proof}
\revQ{Using Theorem \ref{thm:appspaces}, we deduce a similar statement for the approximation spaces $ \mc S^s_q(L^p)$ of sparse TTs. The statement also holds with  $B^{s_\I}_q(L^p(\Omega))$ replaced by smoothness spaces with anisotropic or mixed regularity, because $B^{s_\I}_q(L^p(\Omega))$  is included 
in   $MB^{s_\M}_q(L^p(\Omega))$   or $AB^{\bs\alpha}_q(L^p(\Omega))$, for sufficiently high $s_\M$ or $\bs\alpha$. 
}


\section{Concluding Remarks}\label{sec:conclusion}
\leavevmode


    For all types of smoothness considered -- isotropic, anisotropic and mixed
    -- the introduced approximation tool based on TTs can reproduce optimal or near to optimal rates of convergence, whatever the local polynomial degree $m$, \rev{even for $m=0$, which corresponds to piecewise contant approximation}. \revY{In other words, it is a highly flexible tool that achieves optimal or near to optimal rates across a wide range of smoothness classes, without the need to adapt it to the type or degree of smoothness, nor to structural properties of multivariate functions such as anisotropy.  \rev{   In a statistical learning setting, the obtained results, along with suitable model selection techniques, allow to prove that minimax adaptive rates can be achieved for  all smoothness classes considered in this work \cite{Michel2022Bernoulli}.    }}
        \\\par 
    
     \paragraph{\bfseries The role of sparsity.} We emphasize the importance of sparsity to obtain
    optimal rates, similarly to \cite{ali2023approximation}, cf.\
    \Cref{thm:mainlinear,thm:direct}.
    In the case of \emph{linear} approximation and \emph{isotropic} smoothness,
    there is no benefit in considering sparse TTs.
    In the case of \emph{linear} approximation and \emph{anisotropic}
    or \emph{mixed} smoothness,
    approximating with TTs with dense cores yields rates that are smaller
    than optimal by a dimension-dependent factor that is strictly
    between $1/2$ and $1$, whereas sparse approximation yields optimal rates.    
    In the case of \emph{nonlinear} approximation and \emph{any} type
    of smoothness, approximating with TTs with dense cores yields rates that are
    worse by a factor of $1/2$, where again sparse approximation recovers
    (near to) optimal rates. \revY{Although TT formats with sparse cores provide rate optimality in all cases, they are not yet suitable for practical computation: primarily because of the absence of efficient and controlled compression algorithms that preserve sparsity. However, TT formats with dense cores are amenable to efficient computation and yield approximation rates that are only a factor of 
$1/2$ worse than those achieved by the corresponding sparse formats. Note, however, that a few recent works have   considered sparse formats in practice, e.g., \cite{Gotte2021Sep,Trunschke2025}.}
        \\\par

    \paragraph{\bfseries The curse of dimensionality.} 
    The curse of dimensionality is still present for all smoothness classes considered:
    in the constants and rate of convergence in the isotropic and anisotropic cases\footnote{For the anisotropic case, the curse of dimensionality may not be present in the rate when assuming sufficient anisotropy.},
    and in the constants and log factors in the mixed case.
    This curse is unavoidable for such classical smoothness spaces. 
    One can instead consider, e.g., $\mc F_q^s(L^p)$ as a model class.
    For fixed $s$ and growing $D\rightarrow\infty$, these model classes
    do not exhibit the curse of dimensionality in the approximation rate.
    It does not exclude the curse of dimensionality in the constant
    for the case  $m>0$,
    i.e., one can have a sequence $f_D\in\mc F_q^s(L^p)$ such
    that $\norm{f_D}[\mc F_q^s(L^p)]\sim 2^D$.    
    Our analysis shows that even if $f_1,f_2\in\mc F_q^s(L^p)$
    with ``small'' norms $\norm{f_1}[\mc F_q^s(L^p)]\sim
    \norm{f_2}[\mc F_q^s(L^p)]\sim 1$, we can still have
    $\norm{f_1+f_2}[\mc F_q^s(L^p)]\sim (m+1)^D$.
    This is due to the structure of the considered
    TTs underlying $\mc F_q^s(L^p)$, where
    on the finest scale the considered function space (of features)
    is $(\Pm)^{\otimes D}$. 
    Note however that the approximation tool with $m=0$ (that corresponds to piecewise constant functions) do not exhibit the curse of dimensionality. \rev{The curse could be also avoided by considering for the local approximation space $S^D$ any space including the constants,  with a dimension scaling only polynomially in $D$. 
    }\\\par
      \rev{
    We further emphasize that the absence of inverse embeddings indicates that the approximation tool considered in this work can efficiently approximate functions beyond classical smoothness spaces,  possibly breaking  the curse of dimensionality (with $m=0$). 
    }
        \\\par 
          
    \paragraph{\bfseries The role of base $b$ and local approximation space $S^D$.} 
    \revY{We introduced a whole family of approximation tools -- and associated TT approximation spaces -- parameterized by the choice of base 
$b$ and the local approximation space 
$S^D$. We showed that classical smoothness spaces are continuously embedded in these TT approximation spaces for any 
$b$
 and any local polynomial degree 
$m$. However, the TT approximation spaces do depend on these parameters, and the approximation of functions outside classical smoothness classes may be highly sensitive to their choice. 
For instance, a fractal function constructed through 
$q$-adic partitions can be effectively represented by TTs via tensorization, provided that 
$b$ is chosen appropriately (in relation to 
$q$). In practical applications, we can observe a clear influence of the parameters 
$b$ and 
$m$
 on the quality of the resulting approximation, suggesting the need for adaptive parameter selection -- for example, through model selection approaches such as those proposed in \cite{Michel2022Bernoulli} within a statistical learning framework.
A thorough analysis of the role of 
$b$
 and the local approximation space 
$S^D$ 
 (potentially beyond polynomial spaces), both from theoretical and practical perspectives, is left for future work.
 }
    \\
    \par 
    
        \paragraph{\bfseries The role of tensorization and tensor formats.} 
    \rev{For the classical smoothness classes considered in this work, near-optimal rates can also be achieved by TNs without relying on the tensorization technique, provided an appropriate choice of univariate bases is made \cite{bachmayr2023approximation}. However, using tensorization has the advantage of removing the need to tailor the approximation tool to the specific smoothness. In addition, tensorization allows us to exploit additional structures that can not be captured otherwise (see the example of fractal functions mentioned above). In this work, we have chosen to analyze a particular approximation tool which relies on  a resolution-wise ordering of digits (after tensorization) and the use of the TT format. With the same ordering of variables, near-optimal rates could be obtained with other tree tensor networks, such as the HT format (balanced tree). This can be deduced from results on the conversion from the TT format to the HT format \cite{Buczynska2015Oct,Buczynska2020Jan}. Optimal rates for classical smoothness classes could also be obtained with a coordinate-wise ordering of digits and the TT format, or more general tree tensor networks. However, the corresponding approximation spaces may not possess the nice linear-space structure of the approximation spaces of TTs with resolution-wise ordering.  
    One could also consider an approximation tool based on tree tensor networks where the dimension partition tree is a free parameter, as in \cite{Michel2022Bernoulli}. This leads to a much larger approximation space, but highly nonlinear. Even if it yields highly nonlinear approximation spaces, allowing  different orderings of variables or different TNs topologies may be crucial for high-dimensional problems. For instance, \cite{Grelier2022} presents examples of high-dimensional functions that can be approximated with a dimension-independent rate of convergence using a suitable ordering of variables, whereas a poor choice of  ordering leads to the curse of dimensionality.
        }
    \\\par
        \paragraph{\bfseries Functions over arbitrary domains.} 
      \revY{Finally, we believe that the above approximation results could} be extended to
            bounded domains $\Omega\subset\R^D$ with
            Lipschitz boundary or, more generally,
            $(\varepsilon,\delta)$-domains using
            bounded extension operators as in, e.g.,
            \cite{devore1993besov}, or \cite{ali2020approximation} for neural networks approximation. \revY{This is deferred to future investigation}.


\appendix

\section{Review of Smoothness Classes}\label{sec:review}
In this section, we review
Besov spaces, 
characterizations of Besov spaces by spline systems,
Besov embeddings and best rates of linear and nonlinear approximation
with splines. \revQ{These results are taken from \cite{devoresplines,devore1988interpolation,Leisner,Hansen2012,NonCompact}}.

\subsection{Besov Spaces}\label{sec:besov}
The main result of this work concerns spaces of isotropic,
anisotropic and mixed smoothness.
In this subsection,
we define classes of Besov spaces for each type
of smoothness. These Besov
spaces will serve as prototypes for our results
but, in principle, one could consider
other types of smoothness classes.


\subsubsection{Isotropic Besov Spaces}
Let $\Omega\subset\R^D$
be a bounded Lipschitz domain and $f\in L^p(\Omega)$ for
$0<p\leq\infty$. For $h\in\R^D$,
we denote by $\tau_h:L^p(\Omega)\rightarrow L^p(\Omega_h)$
the translation operator
$(\tau_h f)(x):=f(x+h)$,
$
    \Omega_h:=\{x\in\Omega:\;x+h\in\Omega\}.
$
Define the $r$-th difference as
\begin{align*}
    \Delta_h^r:=(\tau_h-\id)^r:=
    \underbrace{(\tau_h-\id)\circ\ldots\circ(\tau_h-\id)}_{r\text{ times}}
    :L^p(\Omega)\rightarrow L^p(\Omega_{rh}).
\end{align*}
Let $|h|_\alpha$ denote the standard
$\alpha$-(quasi-)norm on $\R^D$ for
$0<\alpha\leq\infty$.
The \emph{isotropic} modulus of smoothness is defined
for any $t>0$ as
\begin{align*}
    \omega_r(f,t)_p:=\sup_{0<|h|_2\leq t}
    \norm{\Delta_h^r(f)}[p].
\end{align*}
The \emph{isotropic} Besov (quasi-)semi-norm is defined for any
$0<p,q\leq\infty$ and any $s_\I>0$ and $r:=\lfloor s_\I\rfloor+1$
as
\begin{align}\label{eq:defisonorm}
    \snorm{f}[\B]:=
    \begin{cases}
        \left(\int_0^1[t^{-s_\I}\omega_r(f,t)_p]^q\frac{\d t}{t}\right)^{1/q},&\quad 0<q<\infty,\\
        \sup_{t>0}t^{-s_\I}\omega_r(f,t)_p,&\quad q=\infty,
    \end{cases}
\end{align}
and the (quasi-)norm as
\begin{align*}
    \norm{f}[\B]:=\norm{f}[p]+\snorm{f}[\B].
\end{align*}
The \emph{isotropic} Besov space is defined as
\begin{align*}
    \B:=\left\{f \revZ{\in} L^p(\Omega):\;
    \norm{f}[\B]<\infty\right\}.
\end{align*}


\subsubsection{Anisotropic Besov Spaces}
Let $e_\nu\in\R^D$ be the $\nu$-th canonical vector and define
the $\nu$-th coordinate difference for
$h\in\R_{>0}$ as
\begin{align*}
    \Delta_h^{r,\nu}(f):=\Delta^r_{he_\nu}\revZ{(f)}.
\end{align*}
The corresponding $\nu$-th modulus of smoothness
is defined for any $t>0$ as
\begin{align*}
    \omega_r^\nu(f,t)_p:=\sup_{0<h\leq t}
    \norm{\Delta_h^{r,\nu}(f)}[p].
\end{align*}
The \emph{anisotropic} Besov (quasi-)semi-norm for
$\bs\alpha:=(\alpha_1,\ldots,\alpha_D)$,
$\bar\alpha:=\max_\nu\alpha_\nu$,
$\underline\alpha:=\min_\nu\alpha_\nu$,
$r:=\lfloor\bar\alpha\rfloor+1$ and
$0<p,q\leq\infty$ is defined as
\begin{align*}
    \snorm{f}[\AB]:=\sum_{\nu=1}^D
    \left(\int_0^1[t^{-\alpha_\nu}\omega_r^{\nu}(f,t)_p]^q\frac{\d t}{t}\right)^{1/q},
\end{align*}
with the usual modification for $q=\infty$
as in \cref{eq:defisonorm},
and the corresponding (quasi-)norm
\begin{align*}
    \norm{f}[\AB]:=\norm{f}[p]+\snorm{f}[\AB].
\end{align*}
The \emph{anisotropic} Besov space is then defined accordingly
as the space of $L^p$-functions with finite norm.
We will also require the following aggregated
smoothness parameter
\begin{align*}
    s_\A:=s_\A(\alpha_1,\ldots,\alpha_D):=
    D\left(\alpha_1^{-1}+\ldots+\alpha_D^{-1}\right)^{-1}.
\end{align*}
\rev{In the special case $\boldsymbol{\alpha} = (s_I, \dots, s_I)$, it holds $s_A = \underline{\alpha} = \bar \alpha =  s_I$, and the space $\AB$ coincides with the isotropic Besov space $\B$. }

\subsubsection{Besov Spaces of Mixed Dominating Smoothness}
Let $\beta\subset\{1,\ldots,D\}$ and $h=(h_1,\ldots,h_D)\in\R^D_{>0}$.
Using the $\nu$-th coordinate difference $\Delta_{h_\nu}^{r,\nu}$ defined
above, we set
\begin{align*}
    \Delta_h^{r,\beta}(f):=\left(\bigotimes_{\nu\in\beta}\Delta_{h_\nu}^{r,\nu}\right)(f).
\end{align*}
Then, for $t\in\R^D_{>0}$, we define the
\emph{mixed} modulus of smoothness
\begin{align*}
    \omega_r^\beta(f,t)_p:=\sup_{0<h<t}\norm{\Delta_h^{r,\beta}(f)}[p],
\end{align*}
where $0<h<t$ is meant component-wise.
For $0<p,q\leq\infty$,
$s_\M>0$ and $r:=\lfloor s_\M\rfloor$+1, we define
the (quasi-)semi-norm
\begin{align*}
      \snorm{f}[MB^{s_\M,\beta}_q(L^p(\Omega))]:=
    \left(\int_{[0,1]^D}
    \left[\left\{\prod_{\nu\in\beta}t_\nu\right\}^{-s_\M}\omega_r^\beta(f,t)_p
    \right]^q\frac{\d t}{\prod_{\nu\in\beta}t_\nu}\right)^{1/q},
\end{align*}
with the standard modification for $q=\infty$.
The \emph{mixed} Besov (quasi-)norm is then defined as
\begin{align*}
    \norm{f}[\MB]:=\norm{f}[p]+
    \sum_{\beta\subset\{1,\ldots, D\}}\snorm{f}[MB^{s_\M,\beta}_q(L^p(\Omega))],
\end{align*}
and the corresponding \emph{mixed} Besov space
$\MB$ accordingly.

\subsection{Classical Results on Besov Spaces}
Using the decomposition defined in \cref{eq:decomp} we can characterize
the Besov norm and correspondingly Besov spaces.
\begin{theorem}[Characterization of Besov Spaces
\cite{devore1988interpolation,Leisner,Hansen2012,NonCompact}]\label{thm:chars}
    Let $\Omega:=[0,1)^D$.
    \begin{enumerate}[label=(\roman*)]
        
        \item Let $0<p,q\leq\infty$ and either $0<\bar\alpha<
        \min\{\bar m+1,\bar m+1/p\}$,
        or $\bar\alpha=\min\{\bar m+1,\bar m+1/p\}$
        and $q=\infty$. Then,
        $f\in\AB$ if and only if \revZ{
        $f = \sum_{l=0}^\infty \sum_{\vert \lambda_\A\vert = l} d_{\lambda_\A,p}(f) \varphi_{\lambda_\A} $
        with convergence} 
        in
        $L^p$ and
        \begin{align*}
            \norm{f}[\AB]\sim
            \left(\sum_{l=0}^\infty b^{\underline{\alpha} ql}\left[\sum_{|\lambda_\A|=l}
            |d_{\lambda_\A,p}(f)|^p\right]^{q/p}
            \right)^{1/q}<\infty
        \end{align*}
        and the usual modification for $q=\infty$. \rev{In the special case $\boldsymbol{\alpha} = (s_\I, \dots, s_\I)$, it holds $s_\A = \underline{\alpha} = \bar \alpha= s_\I$ and $AB^{\bs\alpha}_q(L^p(\Omega))$ coincides with the isotropic Besov space $B^{s_\I}_q(L^p(\Omega))$.}
        
        \item Let $0<p,q\leq\infty$ and either $0<s_\M<
        \min\{\bar m+1,\bar m+1/p\}$,
        or $s_\M=\min\{\bar m+1,\bar m+1/p\}$ and $q=\infty$. Then,
        $f\in\MB$ if and only if 
        \revZ{
        $f = \sum_{l=0}^\infty \sum_{\vert \lambda_\M \vert_1 = l} d_{\lambda_\M,p}(f) \varphi_{\lambda_\M} $
        with convergence} 
        in
        $L^p$ and
        \begin{align*}
            \norm{f}[\MB]\sim
            \left(\sum_{l\in(\N_{\geq 0})^D} b^{s_\M q|l|_1 }\left[\sum_{|\lambda_\M|=l}
            |d_{\lambda_\M,p}(f)|^p\right]^{q/p}
            \right)^{1/q}<\infty
        \end{align*}
        and the usual modification for $q=\infty$. 
       
    \end{enumerate}
\end{theorem}

The above characterizations can be used to
infer the following embeddings.
\begin{theorem}[Besov Embeddings \cite{devore1988interpolation,Leisner,Hansen2012,NonCompact}]\label{thm:embeds}
    Let $\Omega:=[0,1)^D$.
    \begin{enumerate}[label=(\roman*)]
        
        \item For the anisotropic Besov space we have the continuous
        embeddings
        \begin{align*}
            AB^{\bs\alpha}_q(L^\tau)\hookrightarrow L^p(\Omega),
        \end{align*}
        for $0<p<\infty$, $0<q\leq\tau$, $s_\A>0$ and
        $0<\tau<p$ such that
        \begin{align*}
            s_\A/D\geq \frac{1}{\tau}-\frac{1}{p}.
        \end{align*}
\rev{In the special case $\boldsymbol{\alpha} = (s_\I, \dots, s_\I)$, it holds $s_\A = \underline{\alpha} = \bar \alpha= s_\I$ and $AB^{\bs\alpha}_q(L^\tau)$ coincides with the isotropic Besov space $B^{s_\I}_q(L^\tau)$.}
        
        \item For the mixed Besov space we have the continuous
        embeddings
        \begin{align*}
            MB^{s_\M}_q(L^\tau)\hookrightarrow L^p(\Omega),
        \end{align*}
        for $1<p<\infty$, $0<q\leq\tau$, $s_\M>0$ and
        $0<\tau<p$ such that
        \begin{align*}
            s_\M\geq \frac{1}{\tau}-\frac{1}{p}.
        \end{align*}
    \end{enumerate}
\end{theorem}

As a linear method we consider approximating
a target function by a sum of all dilated splines on a given level,
where the sum is given by the
quasi-interpolator of the near-best polynomial
projection from \cref{eq:decomp}.

\begin{theorem}[Linear Approximation Rates
\cite{devoresplines,Leisner,Hansen2012,NonCompact}]\label{thm:classiclinear}
    Let $\Omega:=[0,1)^D$.
    \begin{enumerate}[label=(\roman*)]
        
        \item Let $0<p,q\leq\infty$ and $0<\bar\alpha\leq
        \min\{\bar m+1,\bar m+1/p\}$.
        Let $f\in AB^{\bs\alpha}_p(L^p(\Omega))$ and, for $l\in\N$,
        set
        $\varphi_n:=
        \sum_{|\lambda_\A|\leq l}d_{\lambda_\A,p}(f)\varphi_{\lambda_\A}$
        with the number of terms in the sum bounded at most by a constant
        multiple of
        $n:=b^{lD\underline{\alpha}/s_\A}$. Then,
        \begin{align*}
            \norm{f-\varphi_n}[L^p]\lesssim
            b^{-\underline{\alpha} l} \snorm{f}[AB^{\bs\alpha}_p(L^p(\Omega))]
            =
            n^{-s_\A/D} \snorm{f}[AB^{\bs\alpha}_p(L^p(\Omega))].
        \end{align*}
        \rev{In the special case $\boldsymbol{\alpha} = (s_\I, \dots, s_\I)$, it holds $s_\A = \underline{\alpha} = \bar \alpha= s_\I$ and $\AB$ coincides with the isotropic Besov space $\B$.}
        \item Let $0<p,q\leq\infty$ and $0<s_\M\leq
        \min\{\bar m+1,\bar m+1/p\}$.
        Let $f\in MB^{s_\M}_p(L^p(\Omega))$ and, for $l\in \N^D$,
        set
        $\varphi_n:=
        \sum_{|\lambda_\M|_1\leq l}d_{\lambda_\M,p}(f)\varphi_{\lambda_\M}$
        with the number of terms in the sum bounded at most by a constant
        multiple of
        $n:=L^{D-1}b^{L}$. Then,
        \begin{align*}
            \norm{f-\varphi_n}[L^p]\lesssim
            b^{-s_\M L} \snorm{f}[MB^{s_\M}_p(L^p(\Omega))]
            \leq
            n^{-s_\M}(\log_b(n))^{s_\M(D-1)}\snorm{f}[MB^{s_\M}_p(L^p(\Omega))].
        \end{align*}
    \end{enumerate}
\end{theorem}

Finally, the characterization together with the Besov embeddings
imply the following rates for the best $n$-term approximation.
\begin{theorem}[Best $n$-Term Approximation
\cite{devoresplines,Leisner,Hansen2012,NonCompact}]\label{thm:classicbestnterm}
    Let $\Omega:=[0,1)^D$.
    \begin{enumerate}[label=(\roman*)]
        
        \item Let $0<p<\infty$, $0<\bar\alpha<\min\{\bar m+1,\bar m+1/p\}$
        and $0<\tau<p$
        such that
        \begin{align*}
            s_\A/D\geq 1/\tau-1/p.
        \end{align*}
        Then, for any $f\in AB^{\bs\alpha}_q(L^\tau(\Omega))$
        with $q\leq\tau$, it holds
        \begin{align*}
            E_n(f, \Phi_\A)_{L^p}\lesssim n^{-s_\A/D}\snorm{f}[AB^{\bs\alpha}_{q}(L^\tau(\Omega))].
        \end{align*}
\rev{In the special case $\boldsymbol{\alpha} = (s_\I, \dots, s_\I)$, it holds $s_\A = \underline{\alpha} = \bar \alpha= s_\I$ and $AB^{\bs\alpha}_{q}(L^\tau(\Omega))$ coincides with the isotropic Besov space $B^{s_\I}_{q}(L^\tau(\Omega))$.}

        \item Let $1<p<\infty$, $0<s_\M<\min\{\bar m+1,\bar m+1/p\}$
        and $0<\tau<p$
        such that
        \begin{align*}
            s_\M\geq 1/\tau-1/p.
        \end{align*}
        Then, for any $f\in MB^{s_\M}_q(L^\tau(\Omega))$
        with $q\leq\tau$, it holds
        \begin{align*}
            E_n(f, \Phi_\M)_{L^p}\lesssim n^{-s_\M}
            (\log_b(n))^{s_\M(D-1)}\snorm{f}[B^{s_\M}_{q}(L^\tau(\Omega))].
        \end{align*}
    \end{enumerate}
\end{theorem}


\section{Proofs}
\printProofs

\bibliographystyle{acm}
\bibliography{literature}

\end{document}